\newenvironment{Cases}{\hspace{-8pt}\begin{cases}}{\end{cases}\hspace{-20pt}}
\newcounter{zeile}
\newcommand{\zz}{\stepcounter{zeile}(\arabic{zeile})}
\theoremstyle{plain}
\newtheorem{theorem}{Theorem}[section]
\newtheorem{proposition}[theorem]{Proposition}
\newtheorem{cor}[theorem]{Corollary}
\newtheorem{prop}[theorem]{Proposition}
\newtheorem{lemma}[theorem]{Lemma}
\theoremstyle{definition}
\newtheorem{rmk}[theorem]{Remark}
\numberwithin{equation}{section}
\newtheorem*{theoremA*}{Theorem A}
\newtheorem*{theoremB*}{Theorem B}
\newtheorem*{theorem1*}{Theorem A'}
\newtheorem*{theoremC*}{Theorem C}
\newtheorem*{theoremD*}{Theorem D}
\newtheorem*{theoremE*}{Theorem E}
\newtheorem*{theoremF*}{Theorem F}
\newtheorem*{theoremE2*}{Theorem E2}
\newtheorem*{theoremE3*}{Theorem E3}
\newcommand{\bs}{\backslash}
\newcommand{\C}{\mathbb{C}}
\newcommand{\K}{\mathbb{K}}
\newcommand{\Hb}{\mathbb{H}}
\newcommand{\Z}{\mathbb{Z}}
\newcommand{\R}{\mathbb{R}}
\newcommand{\Aut}{\operatorname{Aut}}
\newcommand{\Sl}{\operatorname{SL}}
\newcommand{\SO}{\operatorname{SO}}
\newcommand{\Sp}{\operatorname{Sp}}
\newcommand{\Ad}{\operatorname{Ad}}
\newcommand{\ad}{\operatorname{ad}}
\newcommand{\diag}{\operatorname{diag}}
\newcommand{\Spin}{\operatorname{Spin}}
\newcounter{pairnumber}
\newcommand{\GPV}[5]{
	\parbox{110pt}{\tiny \pnlabel{#4}\strut\hfill {\tiny$#5$}
		 		 \vspace{0,5cm}
		 \begin{center}
			$#1 \hfill #2$ \\
			{\begin{picture}(43,23)
				\put(0,20){\circle*{4}}      \put(40,20){\circle*{4}}     
				\put(0,20){\line(1,-1){20}} \put(40,20){\line(-1,-1){20}} 
				\put(20,0){\circle*{4}}                                  
								\end{picture}
			} \\
			$#2$\\  
			{\begin{picture}(5,12)\put(0,10){\line(0,-1){10}}
			\end{picture}}
			
			 			 {$#3$}  \hspace{2pt}
			 
		\end{center}\vspace{0,5cm}
	}
}
\newcommand{\GPVIntro}[3]{
	\parbox{110pt}{\tiny \strut\hfill {\tiny$#3$}

		 \begin{center}
		
		{
			\begin{picture}(90,50)
				\put(0,37){$#1$} \put(65,37){$#2$}
				\put(23,30){\circle*{4}}      \put(77,30){\circle*{4}}    
				 \put(50,10){\line(4,3){27}} \put(50,10){\line(-4,3){27}}
				  \put(50,10){\circle*{4}} 
				\put(32,0){$#2$}                                 
				\end{picture}}

		\end{center}\vspace{0,5cm}
	}
}
\newcommand{\GPNIntro}[4]{%
	\parbox{110pt}{\tiny \strut\hfill{$#4$}

		\begin{center}
			{
			\begin{picture}(90,50)
				\put(0,37){$#1$} \put(65,37){$#2$}
				\put(20,30){\circle*{4}}      \put(80,30){\circle*{4}}    
				\put(20,30){\line(0,-1){20}} \put(80,30){\line(0,-1){20}} \put(80,10){\line(-3,1){60}}
				\put(20,10){\circle*{4}}  \put(80,10){\circle*{4}} 
				\put(5,0){$#3$} \put(65,0){$#2$}                                 
				\end{picture}}
			
		\end{center}\vspace{0,5cm}

	}
}
\newcommand{\GPNi}[6]{%
	\parbox{95pt}{\tiny \pnlabel{#5}\strut\hfill{$#6$}
	\vspace{0,5cm}
		\begin{center}
			{
				\begin{picture}(83,90)
				\put(0,75){$#1$} \put(60,75){$#2$}
				\put(20,70){\circle*{4}}      \put(80,70){\circle*{4}}    
				\put(20,70){\line(0,-1){20}} \put(80,70){\line(0,-1){20}} \put(80,50){\line(-3,1){60}}
				\put(20,50){\circle*{4}}  \put(80,50){\circle*{4}} 
				\put(0,40){$#3$} \put(60,40){$#2$}  
				
				\put(20,37){\line(0,-1){15}} 
				
				\put(0,13){$#4$}                                
				\end{picture}
			}
		\end{center}\vspace{0,5cm}
	}
}
\newcommand{\triple}[4]{%
	\parbox{90pt}{\tiny \pnlabel{#4}\strut\hfill{$#3$}
	\vspace{0,5cm}
		\begin{center}
			{
				\begin{picture}(125,80)
				\put(0,75){$#1$} \put(55,75){$#1$} \put(105,75){$#1$}
				\put(20,70){\circle*{4}}      \put(70,70){\circle*{4}} \put(120,70){\circle*{4}} 
				\put(20,70){\line(2,-1){47}} \put(70,70){\line(0,-1){20}} \put(120,70){\line(-2,-1){46}}
				 \put(70,46){\circle*{4}} 
				\put(55,35){$#1$}  
				
				\put(70,33){\line(0,-1){15}} 
								\put(55,10){$#2$}                                
				\end{picture}
			}
		\end{center}\vspace{0,5cm}
	}
}
\newcommand{\GPNisp}[7]{%
	\parbox{150pt}{\tiny \pnlabel{#6}\strut\hfill{$#7$}
	\vspace{0,5cm}
		\begin{center}
			{
				\begin{picture}(83,90)
				\put(-10,75){$#1$} \put(60,75){$#2$}
				\put(20,70){\circle*{4}}      \put(80,70){\circle*{4}}    
				\put(20,70){\line(0,-1){20}} \put(80,70){\line(0,-1){20}} \put(80,50){\line(-3,1){60}}
				\put(20,50){\circle*{4}}  \put(80,50){\circle*{4}} 
				\put(10,40){$#3$} \put(60,40){$#2$}  
				
				\put(20,37){\line(0,-1){15}}
				\put(25,22){\line(4,1){50}}
				\put(80,37){\line(0,-1){15}}
				
				\put(10,13){$#4$}   \put(50,10){$#5$}                             
				\end{picture}
			}
		\end{center}\vspace{0,5cm}
	}
}
\newcommand{\GPNiv}[7]{%
	\parbox{110pt}{\tiny \pnlabel{#6}\strut \hfill{$#7$}
	\vspace{0,5cm}
		\begin{center}
			{
				\begin{picture}(83,90)
				\put(0,75){$#1$} \put(65,75){$#2$}
				\put(20,70){\circle*{4}}      \put(80,70){\circle*{4}}    
				\put(20,70){\line(0,-1){20}} \put(80,70){\line(0,-1){20}} \put(80,50){\line(-3,1){60}}
				\put(20,50){\circle*{4}}  \put(80,50){\circle*{4}} 
				\put(0,40){$#3$} \put(65,40){$#2$}  
				
				\put(20,37){\line(0,-1){15}} \put(26,37){\line(3,-1){45}}  \put(80,37){\line(0,-1){15}}
				
				\put(0,13){$#4$} \put(65,13){$#5$}                               
				\end{picture}
			}
		\end{center}\vspace{0,5cm}
	}
}
\newcommand{\GPNv}[7]{%
	\parbox{110pt}{\tiny \pnlabel{#6}\strut \hfill{$#7$}
	\vspace{0,5cm}
		\begin{center}
			{
				\begin{picture}(83,90)
				\put(0,75){$#1$} \put(65,75){$#2$}
				\put(20,70){\circle*{4}}      \put(80,70){\circle*{4}}    
				\put(20,70){\line(0,-1){20}} \put(80,70){\line(0,-1){20}} \put(80,50){\line(-3,1){60}}
				\put(20,50){\circle*{4}}  \put(80,50){\circle*{4}} 
				\put(0,40){$#3$} \put(65,40){$#2$}  
				
				\put(20,37){\line(0,-1){15}} \put(28,37){\line(1,-1){16}}  
				\put(76,37){\line(-1,-1){16}} \put(80,37){\line(0,-1){15}}
				
				\put(0,13){$#4$} \put(65,13){$#5$}                               
				\end{picture}
			}
		\end{center}\vspace{0,5cm}
	}
}
\newcommand{\GPNii}[7]{%
	\parbox{105pt}{\tiny \pnlabel{#6}\strut \hfill{$#7$}
		\vspace{0,5cm}
		\begin{center}
			{
				\begin{picture}(83,90)
				\put(0,75){$#1$} \put(65,75){$#2$}
				\put(20,70){\circle*{4}}      \put(80,70){\circle*{4}}    
				\put(20,70){\line(0,-1){20}} \put(80,70){\line(0,-1){20}} \put(80,50){\line(-3,1){60}}
				\put(20,50){\circle*{4}}  \put(80,50){\circle*{4}} 
				\put(0,40){$#3$} \put(65,40){$#2$}  
				
				\put(20,37){\line(0,-1){15}} 
				 \put(80,37){\line(0,-1){15}}
				
				\put(15,13){$#4$} \put(65,13){$#5$}                               
				\end{picture}
			}
		\end{center}\vspace{0,5cm}
	}
}
\newcommand{\GPANl}[8]{%
	\parbox{100pt}{\tiny \pnlabel{#7}\strut\hfill{$#8$}
		\vspace{0,5cm}
		\begin{center}
			{%
				\begin{picture}(83,90)
				\put(0,75){$#1$} \put(60,75){$#2$}
				\put(30,70){\circle*{4}}      \put(80,70){\circle*{4}}    
				\put(30,70){\line(-1,-1){20}} \put(30,70){\line(5,-4){25}} \put(30,70){\line(5,-2){50}} \put(80,70){\line(0,-1){20}} 
				\put(10,50){\circle*{4}} \put(55,50){\circle*{4}} \put(80,50){\circle*{4}}   
				
				\put(0,40){$#3$} \put(45,40){$#4$} \put(70,40){$#2$}    
				
				\put(10,37){\line(0,-1){15}} \put(55,37){\line(0,-1){15}}
				
				\put(0,13){$#5$} \put(50,22){\line(-5,2){36}}\put(45,13){$#6$}
				                           
				\end{picture}
			}
		\end{center}\vspace{0,5cm}
	}
}
\newcommand{\GPNn}[6]{%
	\parbox{95pt}{\tiny \pnlabel{#6}\strut
		\vspace{0,5cm}
		\begin{center}
			{
				\begin{picture}(83,90)
				\put(15,77){$#1$} \put(65,77){$#2$}
				\put(20,70){\circle*{4}}      \put(80,70){\circle*{4}}    
				\put(20,70){\line(0,-1){20}} \put(80,70){\line(0,-1){20}} \put(80,50){\line(-3,1){60}}
				\put(20,50){\circle*{4}}  \put(80,50){\circle*{4}} 
				\put(5,40){$#3$} \put(65,40){$#2$}  
				
				\put(20,37){\line(0,-1){15}} \put(26,37){\line(3,-1){45}} 
				
				\put(10,13){$#4$} \put(65,13){$#5$}                               
				\end{picture}
			}
		\end{center}\vspace{0,5cm}
	}
}
\newcommand{\GPNl}[6]{%
	\parbox{110pt}{\tiny \pnlabel{#5}\strut\hfill{$#6$}
		\vspace{0,1cm}
		
		\begin{center}
			{
			\begin{picture}(100,60)
				\put(0,37){$#1$} \put(65,37){$#2$}
				\put(20,30){\circle*{4}}      \put(80,30){\circle*{4}}    
				\put(20,30){\line(0,-1){20}} \put(80,30){\line(0,-1){20}} \put(80,10){\line(-3,1){60}}
				\put(20,10){\circle*{4}}  \put(80,10){\circle*{4}} 
				\put(5,0){$#3$} \put(65,0){$#2$}                                 
				\end{picture}
			} \\	
			$ $ \\ 
					
			{\rm[}$#4${\rm]} 
		\end{center}\vspace{1,1cm}
	}
}
\newcommand{\GPNll}[6]{%
	\parbox{110pt}{\tiny \pnlabel{#5}\strut\hfill{$#6$}
		\vspace{0,5cm}
		 
		 \begin{center}
			{
				\begin{picture}(83,90)
				\put(0,75){$#1$} \put(65,75){$#2$}
				\put(20,70){\circle*{4}}      \put(80,70){\circle*{4}}    
				\put(20,70){\line(0,-1){20}} \put(80,70){\line(0,-1){20}} \put(80,50){\line(-3,1){60}}
				\put(20,50){\circle*{4}}  \put(80,50){\circle*{4}} 
				\put(5,40){$#3$} \put(65,40){$#2$}     
				\put(80,36){\line(0,-1){15}}    
				\put(65,10){$#4$}                        
				\end{picture}
			} \\

		\end{center}\vspace{0,5cm}
	}
}
\newcommand{\repMi}[8]{%
	\parbox{150pt}{\tiny \pnlabel{#7}\strut\hfill{$#8$}
		\vspace{0,5cm}
		\begin{center}
			{%
				\begin{picture}(113,90)
				\put(0,75){$#1$} \put(80,75){$#2$}
				\put(20,70){\circle*{4}}      \put(100,70){\circle*{4}}    
				\put(20,70){\line(0,-1){20}} \put(20,70){\line(2,-1){40}}  \put(100,70){\line(-2,-1){40}} \put(100,70){\line(0,-1){20}}
				\put(20,50){\circle*{4}}  \put(60,50){\circle*{4}} \put(100,50){\circle*{4}}  
				\put(5,40){$#3$} \put(45,40){$#4$} \put(85,40){$#5$} 
				\put(100,37){\line(0,-1){15}}	
				\put(80,13){$#6$} 
				\end{picture}
			} 
		\end{center}\vspace{0,5cm}
	}
}
\newcommand{\repMii}[9]{%
	\parbox{150pt}{\tiny \pnlabel{#8}\strut\hfill{$#9$}
		\vspace{0,5cm}
		\begin{center}
			{%
				\begin{picture}(113,90)
				\put(0,75){$#1$} \put(80,75){$#2$}
				\put(20,70){\circle*{4}}      \put(100,70){\circle*{4}}    
				\put(20,70){\line(0,-1){20}} \put(20,70){\line(2,-1){40}}  \put(100,70){\line(-2,-1){40}} \put(100,70){\line(0,-1){20}}
				\put(20,50){\circle*{4}}  \put(60,50){\circle*{4}} \put(100,50){\circle*{4}}  
				
				\put(5,40){$#3$} \put(45,40){$#4$} \put(85,40){$#5$} 
				
				\put(20,37){\line(0,-1){15}}   \put(100,37){\line(0,-1){15}}	
				
				\put(0,13){$#6$}   \put(80,13){$#7$} 
				\end{picture}
			} 
		\end{center}\vspace{0,5cm}
	}
}
\newcommand{\repMivi}[9]{
    \def\tempa{#1}%
    \def\tempb{#2}%
    \def\tempc{#3}%
    \def\tempd{#4}%
    \def\tempe{#5}%
    \def\tempf{#6}%
    \def\tempg{#7}%
    \def\temph{#8}%
    \def\tempi{#9}%
    \repMivic
}
\newcommand{\repMivic}[1]{%
	\parbox{140pt}{\tiny \pnlabel{#1}\strut\hfill{$\tempi$}
		\vspace{0,5cm}
		\begin{center}
			{%
				\begin{picture}(113,90)
				\put(0,75){$\tempa$} \put(80,75){$\tempb$}
				\put(20,70){\circle*{4}}      \put(100,70){\circle*{4}}    
				\put(20,70){\line(0,-1){20}} \put(20,70){\line(2,-1){40}}  \put(100,70){\line(-2,-1){40}} \put(100,70){\line(0,-1){20}}
				\put(20,50){\circle*{4}}  \put(60,50){\circle*{4}} \put(100,50){\circle*{4}}  
				
				\put(0,40){$\tempc$} \put(45,40){$\tempd$} \put(80,40){$\tempe$} 

				\put(20,37){\line(0,-1){15}} \put(25,37){\line(2,-1){30}} \put(60,37){\line(0,-1){15}} \put(95,37){\line(-2,-1){30}} \put(100,37){\line(0,-1){15}}	
				
				\put(0,13){$\tempf$} \put(45,13){$\tempg$} \put(80,13){$\temph$} 
				\end{picture}
			} 
		\end{center}\vspace{0,5cm}
	}
}
\newcommand{\repMiii}[9]{
    \def\tempa{#1}%
    \def\tempb{#2}%
    \def\tempc{#3}%
    \def\tempd{#4}%
    \def\tempe{#5}%
    \def\tempf{#6}%
    \def\tempg{#7}%
    \def\temph{#8}%
    \def\tempi{#9}%
    \repMiiic
}
\newcommand{\repMiiic}[1]{%
	\parbox{140pt}{\tiny \pnlabel{#1}\strut\hfill{$\tempi$}
		\vspace{0,5cm}
		\begin{center}
			{%
				\begin{picture}(113,90)
				\put(0,75){$\tempa$} \put(80,75){$\tempb$}
				\put(20,70){\circle*{4}}      \put(100,70){\circle*{4}}    
				\put(20,70){\line(0,-1){20}} \put(20,70){\line(2,-1){40}}  \put(100,70){\line(-2,-1){40}} \put(100,70){\line(0,-1){20}}
				\put(20,50){\circle*{4}}  \put(60,50){\circle*{4}} \put(100,50){\circle*{4}}  
				
				\put(0,40){$\tempc$} \put(45,40){$\tempd$} \put(80,40){$\tempe$} 

				\put(20,37){\line(0,-1){15}} \put(60,37){\line(0,-1){15}}  \put(100,37){\line(0,-1){15}}	
				
				\put(0,13){$\tempf$} \put(45,13){$\tempg$} \put(80,13){$\temph$} 
				\end{picture}
			} 
		\end{center}\vspace{0,5cm}
	}
}
\newcommand{\repMiiiLD}[9]{
    \def\tempa{#1}%
    \def\tempb{#2}%
    \def\tempc{#3}%
    \def\tempd{#4}%
    \def\tempe{#5}%
    \def\tempf{#6}%
    \def\tempg{#7}%
    \def\temph{#8}%
    \def\tempi{#9}%
    \repMiiiLDc
}
\newcommand{\repMiiiLDc}[1]{%
	\parbox{140pt}{\tiny \pnlabel{#1}\strut\hfill{$\tempi$}
		\vspace{0,5cm}
		\begin{center}
			{%
				\begin{picture}(113,90)
				\put(0,75){$\tempa$} \put(80,75){$\tempb$}
				\put(20,70){\circle*{4}}      \put(100,70){\circle*{4}}    
				\put(20,70){\line(0,-1){20}} \put(20,70){\line(2,-1){40}}  \put(100,70){\line(-2,-1){40}} \put(100,70){\line(0,-1){20}}
				\put(20,50){\circle*{4}}  \put(60,50){\circle*{4}} \put(100,50){\circle*{4}}  
				
				\put(0,40){$\tempc$} \put(45,40){$\tempd$} \put(80,40){$\tempe$} 

				\put(20,37){\line(0,-1){15}}\put(24,22){\line(2,1){30}}  \put(60,37){\line(0,-1){15}}  \put(100,37){\line(0,-1){15}}	
				
				\put(0,13){$\tempf$} \put(45,13){$\tempg$} \put(80,13){$\temph$} 
				\end{picture}
			} 
		\end{center}\vspace{0,5cm}
	}
}
\newcommand{\repMiiiRD}[9]{
    \def\tempa{#1}%
    \def\tempb{#2}%
    \def\tempc{#3}%
    \def\tempd{#4}%
    \def\tempe{#5}%
    \def\tempf{#6}%
    \def\tempg{#7}%
    \def\temph{#8}%
    \def\tempi{#9}%
    \repMiiiRDc
}
\newcommand{\repMiiiRDc}[1]{%
	\parbox{140pt}{\tiny \pnlabel{#1}\strut\hfill{$\tempi$}
		\vspace{0,5cm}
		\begin{center}
			{%
				\begin{picture}(113,90)
				\put(0,75){$\tempa$} \put(80,75){$\tempb$}
				\put(20,70){\circle*{4}}      \put(100,70){\circle*{4}}    
				\put(20,70){\line(0,-1){20}} \put(20,70){\line(2,-1){40}}  \put(100,70){\line(-2,-1){40}} \put(100,70){\line(0,-1){20}}
				\put(20,50){\circle*{4}}  \put(60,50){\circle*{4}} \put(100,50){\circle*{4}}  
				
				\put(0,40){$\tempc$} \put(45,40){$\tempd$} \put(80,40){$\tempe$} 

				\put(20,37){\line(0,-1){15}} \put(60,37){\line(0,-1){15}} \put(65,37){\line(2,-1){30}}  \put(100,37){\line(0,-1){15}}	
				
				\put(0,13){$\tempf$} \put(45,13){$\tempg$} \put(80,13){$\temph$} 
				\end{picture}
			} 
		\end{center}\vspace{0,5cm}
	}
}
\newcommand{\repMiviw}[8]{%
	\parbox{140pt}{\tiny 
		\begin{center}
			{%
				\begin{picture}(113,90)
				\put(0,75){$#1$} \put(80,75){$#2$}
				\put(20,70){\circle*{4}}      \put(100,70){\circle*{4}}    
				\put(20,70){\line(0,-1){20}} \put(20,70){\line(2,-1){40}}  \put(100,70){\line(-2,-1){40}} \put(100,70){\line(0,-1){20}}
				\put(20,50){\circle*{4}}  \put(60,50){\circle*{4}} \put(100,50){\circle*{4}}  
				
				\put(0,40){$#3$} \put(45,40){$#4$} \put(80,40){$#5$} 

				\put(20,37){\line(0,-1){15}} \put(25,37){\line(2,-1){30}} \put(60,37){\line(0,-1){15}} \put(95,37){\line(-2,-1){30}} \put(100,37){\line(0,-1){15}}	
				
				\put(0,13){$#6$} \put(45,13){$#7$} \put(80,13){$#8$} 
				\end{picture}
			} 
		\end{center}\vspace{0,5cm}
	}
}
\newcommand{\repMAiii}[9]{
    \def\tempa{#1}%
    \def\tempb{#2}%
    \def\tempc{#3}%
    \def\tempd{#4}%
    \def\tempe{#5}%
    \def\tempf{#6}%
    \def\tempg{#7}%
    \def\temph{#8}%
    \def\tempi{#9}%
    \repMAiiicontinued
}
\newcommand\repMAiiicontinued[2]{%
	\parbox{140pt}{\tiny \pnlabel{#1}\strut \hfill{$#2$}
		\vspace{0,5cm}
		\begin{center}
			{%
				\begin{picture}(123,90)
				\put(-10,75){$\tempa$} \put(70,75){$\tempb$}
				\put(10,70){\circle*{4}}      \put(90,70){\circle*{4}}    
				\put(10,70){\line(0,-1){20}} \put(10,70){\line(2,-1){40}}  \put(90,70){\line(-2,-1){40}} \put(90,70){\line(0,-1){20}} \put(90,70){\line(3,-2){30}}
				\put(10,50){\circle*{4}}  \put(50,50){\circle*{4}} \put(90,50){\circle*{4}} \put(120,50){\circle*{4}}   
				\put(-5,40){$\tempc$} \put(35,40){$\tempd$} \put(75,40){$\tempe$} \put(110,40){$\tempf$}
				\put(10,37){\line(0,-1){15}}  \put(90,37){\line(0,-1){15}} \put(120,37){\line(0,-1){15}}
				\put(-10,13){$\tempg$} \put(60,13){$\temph$} \put(115,23){\line(-3,2){20}}\put(110,13){$\tempi$}          
				\end{picture}
			}
		\end{center}\vspace{0,5cm}
	}
}
\newcommand{\repMin}[9]{
    \def\tempa{#1}%
    \def\tempb{#2}%
    \def\tempc{#3}%
    \def\tempd{#4}%
    \def\tempe{#5}%
    \def\tempf{#6}%
    \def\tempg{#7}%
    \def\temph{#8}%
    \def\tempi{#9}%
    \repMinc
}
\newcommand{\repMinI}[6]{%
	\parbox{130pt}{\tiny  \strut\hfill{$#6$}
		\vspace{0,5cm}
		\begin{center}
			{%
				\begin{picture}(113,90)
				\put(0,77){$#1$} \put(95,77){$#2$}
				\put(20,70){\circle*{4}}      \put(100,70){\circle*{4}}    
				\put(20,70){\line(0,-1){20}} \put(20,70){\line(2,-1){40}}  \put(100,70){\line(-2,-1){40}} \put(100,70){\line(0,-1){20}}
				\put(20,50){\circle*{4}}  \put(60,50){\circle*{4}} \put(100,50){\circle*{4}}  
				
				\put(5,40){$#3$} \put(45,40){$#4$} \put(80,40){$#5$}

				\end{picture}
				
			} 
		\end{center}\vspace{0,5cm}}}
\newcommand{\repMinc}[1]{%
	\parbox{130pt}{\tiny  \pnlabel{#1}\strut\hfill{$\tempi$}
		\vspace{0,5cm}
		\begin{center}
			{%
				\begin{picture}(113,90)
				\put(0,77){$\tempa$} \put(95,77){$\tempb$}
				\put(20,70){\circle*{4}}      \put(100,70){\circle*{4}}    
				\put(20,70){\line(0,-1){20}} \put(20,70){\line(2,-1){40}}  \put(100,70){\line(-2,-1){40}} \put(100,70){\line(0,-1){20}}
				\put(20,50){\circle*{4}}  \put(60,50){\circle*{4}} \put(100,50){\circle*{4}}  
				
				\put(5,40){$\tempc$} \put(45,40){$\tempd$} \put(80,40){$\tempe$} 
				
				\put(20,37){\line(0,-1){15}} \put(95,37){\line(-2,-1){30}} \put(100,37){\line(0,-1){15}}	
				
				\put(0,13){$\tempf$} \put(55,13){$\tempg$} \put(85,13){$\temph$} 
				\end{picture}
				
			} 
		\end{center}\vspace{0,5cm}
	}
}
\newcommand{\repNM}[9]{
    \def\tempa{#1}%
    \def\tempb{#2}%
    \def\tempc{#3}%
    \def\tempd{#4}%
    \def\tempe{#5}%
    \def\tempf{#6}%
    \def\tempg{#7}%
    \def\temph{#8}%
    \def\tempi{#9}%
    \repNMc
}
\newcommand{\repNMc}[1]{%
	\parbox{180pt}{\tiny  \pnlabel{#1}\strut\hfill{$\temph$}
		\vspace{0,5cm}
		\begin{center}
			\hfill $\tempa$ \hfill $\tempb$ \hfill $\tempc$ \hfill \\
			{
				\begin{picture}(123,23)
				\put(20,20){\circle*{4}}      \put(60,20){\circle*{4}}   \put(100,20){\circle*{4}} 
				\put(0,0){\line(1,1){20}} \put(60,20){\line(-1,-1){20}} \put(100,20){\line(-1,-1){20}} \put(40,0){\line(-1,1){20}} \put(80,0){\line(-1,1){20}} \put(120,0){\line(-1,1){20}}
				
				\put(0,0){\circle*{4}} \put(40,0){\circle*{4}}  \put(80,0){\circle*{4}}    \put(120,0){\circle*{4}}                     
				\end{picture}
			} \\			
			$\tempd$ \hfill $\tempe$ \hfill $\tempf$ \hfill $\tempg$ \\
			$ $ \\
			$ \tempi$ 
		\end{center}\vspace{0,5cm}
	}
}
\newcommand{\repMM}[9]{
    \def\tempa{#1}%
    \def\tempb{#2}%
    \def\tempc{#3}%
    \def\tempd{#4}%
    \def\tempe{#5}%
    \def\tempf{#6}%
    \def\tempg{#7}%
    \def\temph{#8}%
    \def\tempi{#9}%
    \repMMc
}
\newcommand{\repMMc}[1]{%
	\parbox{200pt}{\tiny \pnlabel{#1}\strut \hfill{$\temph$}
		\vspace{0,5cm}
		\begin{center}
			$\hfill \tempa\hfill \tempb \hfill \tempc \hfill$ \\
			{
				\begin{picture}(123,23)
				\put(20,20){\circle*{4}}      \put(60,20){\circle*{4}}   \put(100,20){\circle*{4}} 
				\put(20,20){\line(-1,-1){20}} \put(60,20){\line(-1,-1){20}} \put(100,20){\line(-1,-1){20}} \put(20,20){\line(3,-1){60}} \put(80,0){\line(-1,1){20}} \put(120,0){\line(-1,1){20}}
				
				\put(0,0){\circle*{4}} \put(40,0){\circle*{4}}  \put(80,0){\circle*{4}}    \put(120,0){\circle*{4}}                     
				\end{picture}
			} \\			
			$\tempd \hfill \tempe \hfill \tempf \hfill \tempg$ \\
			$ $ \\ 
			$\tempi$
		\end{center}\vspace{0,5cm}
	}
}
\def\af{\mathfrak{a}}
\def\bfrak{\mathfrak{b}}
\def\e{\epsilon}
\def\gf{\mathfrak{g}}
\def\ff{\mathfrak{f}}
\def\cf{\mathfrak{c}}
\def\ef{\mathfrak{e}}
\def\hf{\mathfrak{h}}
\def\kf{\mathfrak{k}}
\def\lf{\mathfrak{l}}
\def\mf{\mathfrak{m}}
\def\nf{\mathfrak{n}}
\def\pf{\mathfrak{p}}
\def\spin{\mathfrak{spin}}
\def\qf{\mathfrak{q}}
\def\rf{\mathfrak{r}}
\def\sf{\mathfrak{s}}
\def\sl{\mathfrak{sl}}
\def\gl{\mathfrak{gl}}
\def\so{\mathfrak{so}}
\def\sp{\mathfrak{sp}}
\def\su{\mathfrak{su}}
\def\uf{\mathfrak{u}}
\def\zf{\mathfrak{z}}
\def\1{{\bf1}}
\def\Oc{\mathcal{O}}
\def\tilde{\widetilde}
\def\str{\mathfrak{s}}
\def\Str{\mathrm{S}}
\newcommand{\HH}{\mathbb{H}}
\newcommand{\sE}{\mathsf{E}} \newcommand{\sF}{\mathsf{F}}
\newcommand{\sG}{\mathsf{G}}
\newcounter{Tabelle}
\newcommand{\Tabelle}[1]{\refstepcounter{Tabelle}Table \arabic{Tabelle}\label{#1}}
\title[Classification of real spherical pairs]
{Classification of reductive real spherical pairs\\ II. the semisimple case}
\subjclass[2000]{14M17, 20G20, 22E15, 22F30, 53C30}
\begin{document}
\date{September 6, 2018}

\begin{abstract} If $\gf$ is a real reductive Lie algebra and $\hf\subset\gf$ is a subalgebra, then the pair
$(\gf,\hf)$ is called real spherical provided that $\gf=\hf+\pf$ for some choice of a minimal parabolic 
subalgebra $\pf\subset \gf$.  This paper concludes the classification of real
spherical pairs $(\gf,\hf)$, where $\hf$ is a reductive real 
algebraic subalgebra. 
More precisely, we classify all such pairs which are strictly
indecomposable, and we discuss (in Section 6) how to construct from these all
real spherical pairs.
A preceding paper treated the case where 
$\gf$ is simple. The present work builds on that case and on the classification by Brion
and Mikityuk for the complex spherical case.
\end{abstract}

\author[Knop]{Friedrich Knop}
\email{friedrich.knop@fau.de}
\address{Department Mathematik, Emmy-Noether-Zentrum\\
FAU Erlangen-N\"urnberg, Cauerstr. 11, 91058 Erlangen, Germany} 

\author[Kr\"otz]{Bernhard Kr\"{o}tz}
\email{bkroetz@gmx.de}
\address{Universit\"at Paderborn, Institut f\"ur Mathematik\\Warburger Stra\ss e 100, 
D-33098 Paderborn, Deutschland}

\author[Pecher]{Tobias Pecher} 
\email{tpecher@math.upb.de}
\address{Universit\"at Paderborn, Institut f\"ur Mathematik\\Warburger Stra\ss e 100, 
D-33098 Paderborn, Deutschland}

\thanks{The second author was supported by ERC Advanced Investigators Grant HARG 268105}
\author[Schlichtkrull]{Henrik Schlichtkrull}
\email{schlicht@math.ku.dk}
\address{University of Copenhagen, Department of Mathematics\\Universitetsparken 5, 
DK-2100 Copenhagen \O, Denmark}

\maketitle

\setcounter{section}{-1}

\section{Introduction}

One of the most remarkable accomplishments of early Lie theory was the discovery by W.~Killing
that the simple complex Lie algebras are classified (up to isomorphism) by the 
irreducible root systems.
The classification was completed and extended to real Lie algebras by E.~Cartan, who determined all
real forms of the algebras from Killing's list and later used that to obtain a classification
(up to local isomorphism) of all Riemannian symmetric spaces. In turn, 
Cartan's list of symmetric spaces was extended to pseudo-Riemannian symmetric spaces
by M.~Berger \cite{Berger}. These classifications have played a profound role in the development of the
theory of semi-simple Lie groups and their symmetric spaces, for example by providing important examples 
for explicit calculations.

More recently the notion of a symmetric space has been generalized. Given a complex
reductive Lie group $G$ and a closed complex subgroup $H$ the homogeneous space $Z:=G/H$
is said to be {\it spherical} if 
there exists a Borel subgroup $B\subset G$ such that the orbit
$Bz\subset Z$ is open for some $z\in Z$.
In particular, this is the case when $G/H$ is symmetric.
The condition of being spherical is local, and it can be stated in terms
of the corresponding Lie algebras
as $\gf=\bfrak+\hf$, a vector space sum, for some Borel subalgebra $\bfrak$
of $\gf$. 
The pair $(\gf,\hf)$ is called {\it spherical} when this condition is fulfilled, 
and it is called {\it reductive} if $\hf$ is
reductive in $\gf$. By extending Cartan's list of symmetric pairs
the spherical reductive pairs of the simple complex Lie algebras were classified  
(up to isomorphism) by M.~Kr\"amer \cite{Kr}. 
Subsequently such a classification, but without the assumption of $\gf$ being 
simple, was obtained by M.~Brion \cite{Brion} and I.V.~Mikityuk \cite{Mik}.

A notion of spherical homogeneous spaces exists also for real Lie groups.
If $G$ is a real reductive Lie group and $H$ a closed subgroup, the space $G/H$
is called real spherical if 
there exists a minimal parabolic subgroup $P\subset G$ such that the orbit
$Pz\subset Z$ is open for some $z\in Z$. Again the notion is local and translates  
into $\gf=\pf+\hf$ for some minimal parabolic 
subalgebra $\pf$, in which case
$(\gf,\hf)$ is called a {\it real spherical pair}. 
It is clear that the pair $(\gf,\hf)$ is real spherical if the 
complexified pair $(\gf_\C,\hf_\C)$
is spherical, since   
(up to conjugation) the Borel subalgebra
$\bfrak$ is contained in the complexification
$\pf_\C$ of $\pf$.
In this case the real pair is said to be {\it absolutely spherical}.
In particular, this is always the case if $G/H$ is a symmetric space. However,
there do exist real spherical pairs which are not absolutely spherical.

In recent years a vivid research activity has taken place for real spherical spaces,
motivated in part by \cite{SV}. See for example \cite{DKS},  \cite{KK}, \cite{KKSS}, 
\cite{KKSS2}, \cite{KKS}, \cite{KKS2},  \cite{KM}, \cite{KoOs},  
\cite{KKOS}, \cite{KS1}, \cite{KS2}, \cite{Mol}. This paper was driven
by the desire to determine the scope of this new area, and by the need to
find relevant examples by which one can support investigations
through explicit computations.

In the  preceding paper \cite{KKPS}, from now on referred to as part I, we 
obtained a classification of the real spherical reductive pairs $(\gf,\hf)$ with
$\gf$ simple, thus providing a real analogue of the list of Kr\"amer.
The aim of this second part is then to obtain a real analogue of the
subsequent classification by Brion and Mikityuk. More precisely, 
all strictly indecomposable (a notion which will be explained below)
real spherical reductive pairs $(\gf,\hf)$ with $\gf$ semi-simple
will be classified up to isomorphism.

To be specific we consider a
real reductive Lie algebra $\gf$, that is, $\gf$ is a real Lie algebra such that 
$\gf=\zf(\gf) \oplus [\gf,\gf]$ is a direct Lie algebra sum with $\zf(\gf)$ the center of $\gf$, and 
with semi-simple derived subalgebra $[\gf,\gf]$.
A subalgebra $\hf\subset\gf$ is called 
\begin{itemize}  
\item {\it reductive (in $\gf$)} if $\ad_\gf|_\hf$ is completely reducible, 
\item {\it compact (in $\gf$)} if $e^{\ad_\gf \hf }\subset \Aut(\gf)$ is compact, 
\item {\it elementary (in $\gf$)}  if $\hf$ is reductive in $\gf$ and $[\hf,\hf]$ is compact, 
\item {\it symmetric (in $\gf$)} if $\hf$ is the fixed point set of an involutive automorphism of $\gf$.
\end{itemize}

{}From now on we let $\hf\subset\gf$ be a reductive subalgebra. It decomposes into
$\hf=\hf_{\rm n} \oplus \hf_{\rm el}$ where $\hf_{\rm el}$ is its largest elementary ideal 
and $\hf_{\rm n}$ is the sum of its non-elementary simple ideals.
In addition we require that $\hf$ is real algebraic, that is $\hf_\C:=\hf\otimes_\R \C$ is an algebraic 
subalgebra of $\gf_\C$. Recall that this is always the case when $\hf$ is semi-simple.

\par 
The classification of real spherical pairs $(\gf,\hf)$ readily reduces to the case where 
$\gf$ is semi-simple with all simple factors being non-compact 
(see Lemma \ref{lemma stand2}). We assume this from now on
and let $\gf=\gf_1\oplus \ldots\oplus \gf_k$ be the decomposition into simple factors. 
For a subalgebra $\hf\subset\gf$ we let $\hf_i$ be the projection of $\hf$ to $\gf_i$. It is easy to 
see that if $(\gf,\hf)$ is real spherical then all $(\gf_i, \hf_i)$ are real spherical 
(see Lemma \ref{lemma stand1}).  In this sense the real spherical pairs
$(\gf,\hf)$ with $\gf$ simple, which were classified in part I,
serve as building blocks for the general classification. 

\par The pair $(\gf,\hf)$ is called {\it decomposable} if  
$\gf=\gf_1\oplus \gf_2$ with $\gf_i\neq \{0\}$ and 
$\hf=(\hf\cap\gf_1 )\oplus( \hf\cap\gf_2)$, and {\it indecomposable} otherwise.
Clearly it suffices to classify indecomposable pairs. 
The step from simple to semi-simple is rather straightforward in the 
classification of symmetric pairs, as the only indecomposable symmetric pairs
with $\gf$ not simple are the so-called group cases
$(\gf,\hf)\simeq (\hf\oplus \hf, \diag\hf)$. For spherical pairs
the situation is far more complicated. This is seen already in the complex
case where Brion and Mikityuk found a multitude of indecomposable complex 
spherical pairs in addition to the group case. 

\par It would then be desirable to give a classification of all indecomposable
reductive real spherical pairs, but in order to obtain an efficient classification 
an additional requirement is necessary.
We call $(\gf,\hf)$ {\it strictly indecomposable} 
provided that $(\gf,\hf_{\rm n})$ is indecomposable,
and we shall only completely classify the strictly indecomposable 
real spherical pairs.
Such a stronger assumption appears also in the work of Brion and Mikityuk,
as explained in \cite[Sect. 5] {Mik} (see also \cite[p.~45-46]{Timashev}). 
In Section \ref{Section not strictly} we describe the reduction
of the indecomposable case to the strictly indecomposable case.

With the requirement of strict indecomposability it is a particular feature of the 
Brion-Mikityuk classification that the number 
of simple factors  of $\gf$ is at most $3$.
Among the most prominent examples of Brion and Mikityuk are the Gross-Prasad spaces:
$$\begin{tabular}{cc} $\GPNIntro{\sl(n+1,\C)}{\sl(n,\C)}{\gl(1,\C)}{n\geq 2}$& \qquad
$\GPVIntro{\so(m+1,\C)}{\so(m,\C)}{m\geq 3}$ \end{tabular} $$
Here one finds $\gf=\gf_1\oplus \gf_2$ in the top row, $\hf$ in the lower row,  
and the various lines 
indicate the factors of $\gf$ into which the  
 given factors $\hf'$ of $\hf$ embed (diagonally if more than one line branches from $\hf'$).
It is noteworthy
 that the second case specialized to $m=3$ yields  the well-known
triple case
$(\sl(2,\C)\oplus\sl(2,\C) \oplus\sl(2,\C) , \diag\sl(2,\C))$.

\par   
We now describe our classification in more detail. 
As in the complex case it turns out that the number of factors
is at most $3$.  The cases with two factors are listed in 
Theorem \ref{thm:semisimplespherical} and the remaining  ones
in Theorem \ref{Thm k=3}. 
In both theorems we use diagrams similar to those above to describe
$\gf$ and $\hf$, and in total there are approximately 50
such diagrams.
Given the classification of Brion-Mikityuk the absolutely spherical
pairs are fairly easy to determine,  and they 
are marked by {\it a.s.}~in our tables.
The majority of the diagrams contain cases which are not absolutely spherical.
We highlight here our two most exotic cases:

$$\begin{tabular}{cc} 
$\repMinI{\sp(n+1,\K)}{\sE_6^3}{\sp(n,\K)}{\sl(2,\R)}{\ \ \su(5,1)}{n\geq 1\atop\K=\R,\C}$&\qquad 
$\repMinI{\sp(n+1,\K)}{\sE_7^3}{\sp(n,\K)}{\sl(2,\R)}{\ \su(10,2)}{n\geq 1\atop \K=\R,\C}$
\end{tabular}\, .$$

A real spherical pair $(\gf,\hf)$ such that $(\gf\oplus \hf, \diag\hf)$ is real spherical, 
as for example the second Gross-Prasad pair above, is called {\it strongly spherical};  see
Theorem \ref{thm:semisimplespherical} 1b) which lists those with both $\gf$ and $\hf$ 
simple.   Strongly spherical pairs deserve special attention in view of their relevance for branching laws from 
$\gf$ to $\hf$  (see \cite{KM}, \cite{Mol}). Prior to this work strongly spherical pairs $(\gf,\hf)$ 
where $\hf\subset\gf$ is symmetric were classified in \cite{KM}.  The cases found 
in \cite{KM} can easily be extracted from our tables 
together with all additional 
cases for non-symmetric $\hf$, see
Table \ref{strsph} at the end of the paper.

\par  Let us now explain our approach.  Given a minimal parabolic subalgebra 
$\pf$ such that $\gf=\hf+\pf$ we recall from \cite{KKS}
the infinitesimal version of the local structure theorem:
There exists a unique parabolic subalgebra $\qf\supset \pf$ of $\gf$ 
with Levi decomposition $\qf=\lf \ltimes \uf$  such that
\begin{itemize} 
\item $\qf\cap\hf=\lf\cap \hf$, 
\item $\lf\cap\hf$ contains all non-compact simple ideals of $\lf$.
\end{itemize}
The subalgebra $\str(\gf,\hf):=\lf\cap\hf$ of $\hf$ is reductive and
is an invariant of the real spherical pair $(\gf,\hf)$. It is called 
the {\it structural algebra} (cf.~\cite{KnSt} where $\str(\gf,\hf)$ is called principal subalgebra). 
Suppose now that $\gf=\gf_1\oplus\gf_2$ and that $\hf\subset\gf$ is a 
reductive subalgebra with projection $\hf_i$  to $\gf_i$  for $i=1,2$.
A necessary condition for $(\gf,\hf)$ to be  real spherical 
is that both $(\gf_1,\hf_1)$ and $(\gf_2,\hf_2)$ are real 
spherical. Let us assume this and set $\hf':=\hf_1\oplus\hf_2$.
Then $\hf'$  is a real spherical subalgebra of $\gf$ which  contains $\hf$. Our main tool for the classification 
is the fact (see Lemma \ref{lemma stand1}) that in this case 
$\hf\subset\gf$ is real spherical if and only if 
there exist minimal parabolic subalgebras 
$\pf_{i}\subset\str(\gf_i,\hf_i)$ such that 
$$ \hf' = \hf + \pf_{1}+\pf_{2}. $$ 
In particular it is a necessary condition for $(\gf,\hf)$ to be real spherical that 
$$ \hf' = \hf + \str(\gf_1,\hf_1)+ \str(\gf_2,\hf_2)\,. $$ 
This condition turns out to be rather restrictive in view of the factorization results of 
Onishchik \cite{Oni}, which were
already used in  Part I, and which are recalled in Proposition \ref{Oni2}.
In the first step we therefore determine all structural Lie algebras $\str(\gf,\hf)$ for $\gf$ simple. 
Some part was already done in  Part I and in  Appendix \ref{appendix} we provide a complete list in
Tables \ref{lcaph_class_symm} - \ref{lcaph_KKPS_class}.
It is then a matter of efficient bookkeeping to obtain the classification for $k=2$ factors, which is recorded in 
Theorem \ref{thm:semisimplespherical}. This theorem is divided into three parts:  $\hf$ simple, 
$\hf$ semi-simple but not simple, and $\hf$ reductive but not semi-simple.
In all cases we also determine  the subalgebra $\str(\gf,\hf)$ of $\hf$.  Having obtained 
the classification for $k=2$ together with this information, it is then a rather quick task to 
derive both the classification for $k=3$ and the exclusion 
of $k\geq 4$  (see Theorem \ref{Thm k=3} and Proposition \ref{four exclusion}).

In Appendix \ref{AppB} at the end of the paper we prove a general result on the 
geometric structure of restricted root spaces for symmetric spaces, which generalizes a
theorem of Kostant for Riemannian symmetric spaces, \cite{Kostant} Thm.~2.1.7.
The result is applied in Section \ref{The case of two factors} to a particular symmetric space of $E_7$.

\subsection*{Acknowledgment}
It is a pleasure to thank a meticulous referee for some valuable suggestions which
resulted in an improved exposition. In particular, Prop.~\ref{referees suggestion} was
suggested by him. In addition we thank Jan Frahm for pointing out some inaccuracies
in an earlier version of the paper.

\section{Notation for classical and exceptional Lie groups}

Fix a real reductive Lie algebra $\gf$ and let $G_\C$ be a linear complex algebraic group
with Lie algebra $\gf_\C=\gf\otimes_\R\C$.   If $\gf_\C$ is classical, then 
 we shall denote by $G_\C$ the corresponding 
classical group, i.e. $G_\C= \Sl(n,\C), \SO(n,\C), \Sp(n, \C)$.  
To avoid confusion let us stress that we use the notation $\Sp(n,\R)$, $\Sp(n,\C)$ to indicate that the underlying classical vector space 
is $\R^{2n}$, $\C^{2n}$.  Further $\Sp(n)$ denotes the compact real form of $\Sp(n,\C)$ and likewise the underlying 
vector space for $\Sp(p,q)$ is $\C^{2p+2q}$.   Finally we use 
$\SO_0(p,q)$ for the identity component of ${\mathrm O}(p,q)$, the indefinite 
orthogonal group on $\R^{p+q}$.

 For the exceptional Lie algebras we
use the notation of Berger, \cite[p.~117]{Berger} , and
write $\sE_6^\C, \sE_7^\C$ etc.~for the complex simple 
Lie algebras of type $E_6, E_7$ etc., and $\sE_6, \sE_7$ etc.~for
the corresponding compact real forms. For the non-compact real forms we write
\begin{eqnarray*}\sE_6^1, \sE_6^2, \sE_6^3, \sE_6^4\qquad &\hbox{for}& \qquad \mathrm{ E\,I, E\,II, E\,III, E\,IV}\\
\sE_7^1, \sE_7^2, \sE_7^3\qquad &\hbox{for}& \qquad  \mathrm{ E\,V, E\,VI, E\,VII}\\
\sE_8^1, \sE_8^2\qquad &\hbox{for}& \qquad  \mathrm{ E\,VIII, E\,IX}\\
\sF_4^1, \sF_4^2 \qquad &\hbox{for}& \qquad \mathrm{ F\,I, F\,II}\end{eqnarray*}
and finally  $\sG_2^1$ for  the unique non-compact real form of $\sG_2^\C$. 

\par By slight abuse of notation we also denote the simply connected Lie groups with these Lie algebras
by the same symbols.

\section{{Real spherical pairs}}

\subsection{ Preliminaries}

Let $\gf$ be a real reductive Lie algebra and $\hf\subset\gf$ an algebraic 
subalgebra. Recall from the introduction
that the pair $(\gf,\hf)$ is called {\it real spherical} provided there exists a minimal 
parabolic subalgebra $\pf  \subset \gf$ such that 
$$ \gf = \hf +\pf\, .$$  
For later reference we record the obvious necessary condition
\begin{equation}\label{dim bd}
\dim \gf \leq \dim \hf +\dim \pf.
\end{equation}

A pair $(\gf,\hf)$ of a complex Lie algebra and a complex subalgebra
is called {\it complex spherical}  or just {\it spherical} if it is real spherical
when regarded as a pair of real Lie algebras.
Note that in this case the minimal parabolic subalgebras 
of $\gf$ are precisely the Borel subalgebras.
We recall also that a real reductive pair $(\gf,\hf)$ for which
$(\gf_\C, \hf_\C)$ is spherical is said to be {\it absolutely spherical},
and from  \cite[Lemma 2.1]{KKS2} we record:

\begin{lemma} \label{realform_of_complexspherical}
All absolutely spherical pairs are real spherical.
\end{lemma}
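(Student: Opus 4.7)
The plan is to produce a minimal parabolic $\pf \subset \gf$ such that $\gf_\C = \hf_\C + \pf_\C$; this is enough because $\hf_\C + \pf_\C = (\hf+\pf)\otimes_\R\C$ has complex dimension equal to $\dim_\R(\hf+\pf)$, whereas $\gf_\C$ has complex dimension $\dim_\R\gf$, so the equality of sums forces $\hf+\pf=\gf$.

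Fix any minimal parabolic $\pf_0\subset\gf$; its complexification $\pf_{0,\C}$ is automatically a parabolic subalgebra of $\gf_\C$, and we let $P_\C\subset G_\C$ be the corresponding connected parabolic subgroup. By the sphericity hypothesis, the set
\[
V := \{\bfrak\subset\gf_\C \text{ Borel}\mid \gf_\C = \hf_\C + \bfrak\}
\]
is a non-empty, Zariski-open, $H_\C$-invariant subset of the full flag variety $\B$ of $G_\C$. Projecting along the $G_\C$-equivariant surjection $\B\to G_\C/P_\C$ (which sends each Borel to the unique conjugate of $P_\C$ containing it) yields a non-empty, Zariski-open, $H_\C$-invariant subset $U\subset G_\C/P_\C$ whose points are precisely those $G_\C$-conjugates $\pf'$ of $\pf_{0,\C}$ satisfying $\gf_\C = \hf_\C + \pf'$.

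It therefore suffices to find a real point of $U$, i.e.\ an element of the real orbit $G\cdot[P_\C]\cong G/P_0$ lying in $U$; any such $gP_\C\in U$ with $g\in G$ yields the minimal parabolic $\pf := \Ad(g)\pf_0$ of $\gf$ satisfying $\gf_\C = \hf_\C + \pf_\C$. The main step is then to show that $G/P_0$ is Zariski dense in the irreducible complex projective variety $G_\C/P_\C$. This follows from the Bruhat decomposition: the real opposite unipotent $\oline N_0 = \exp(\oline\nf_0)$, where $\oline\nf_0$ is the sum of the negative restricted root spaces associated with $\pf_0$, embeds into $G/P_0$ as its open big Bruhat cell and simultaneously sits inside the complex big cell $\oline N_{0,\C}\cong\C^{\dim_\R\oline\nf_0}$ of $G_\C/P_\C$ as a totally real linear subspace of matching real dimension; since any real affine subspace $\R^m\subset\C^m$ is Zariski dense in $\C^m$, $G/P_0$ is Zariski dense in $G_\C/P_\C$. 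Hence $U\cap G/P_0$ is a non-empty (Zariski-open) subset of $G/P_0$, and the first paragraph concludes. The main technical care point is this Zariski-density argument, which relies on carefully matching the real and complex open Bruhat cells.
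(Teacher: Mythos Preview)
Your argument is correct. The paper does not give its own proof of this lemma but simply cites it from \cite[Lemma~2.1]{KKS2}; your approach---reducing to the Zariski density of $G/P_0$ inside $G_\C/P_\C$ and verifying that density via the open Bruhat cell---is the standard one and is essentially what one finds in that reference.

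One minor point of phrasing: you describe $U$ both as the image of $V$ under $\B\to G_\C/P_\C$ and as the set of conjugates $\pf'$ with $\gf_\C=\hf_\C+\pf'$. The image of $V$ is certainly contained in the latter set, but equality is neither obvious nor needed. Either description suffices for your purposes: the image of $V$ is open because the projection is a smooth fiber bundle, while the set $\{\pf':\gf_\C=\hf_\C+\pf'\}$ is open by upper semicontinuity of $\dim(\hf_\C\cap\pf')$; both are non-empty, and that is all the Zariski-density step requires.
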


We denote by $G$ the connected Lie subgroup of $G_\C$ with Lie algebra $\gf$, and
for an algebraic subalgebra $\hf\subset\gf$ we denote by $H\subset G$ the corresponding  connected Lie subgroup. 
Let $P\subset G$ be a minimal parabolic subgroup. 
A real variety $Z$ on which $G$ acts is called real spherical 
if there is an open $P$-orbit on $Z$. In particular, this applies to 
the homogeneous space $Z:=G/H$.
 If $L\subset G$ we write $L_H:=L\cap H$ for the intersection with $H$.

\subsection{ Local structure theorem}

Let $Z=G/H$ be a real spherical space and choose a minimal parabolic subgroup 
$P\subset G$ such that $PH$ 
is open in $G$.  
The local structure theorem,
 \cite[Th. 2.3 and Th. 2.8]{KKS}, asserts that there is a  unique parabolic subgroup $Q\supset P$ 
 which admits a Levi decomposition $Q= L \ltimes U$ for which:
\begin{eqnarray}  
\label{LST1} PH&=&QH\, , \\ 
\label{LST2} Q_H&=&L_H\,, \\ 
\label{LST3} L_{\mathrm n}&\subset&  L_H\, ,
\end{eqnarray}
where $L_{\mathrm n}\triangleleft L$ is the connected normal subgroup with Lie algebra 
$\lf_{\mathrm n}$, the sum of all 
non-compact simple ideals of $\lf$.

The parabolic subgroup $Q$ depends on the homogeneous space $Z$, and on
the choice of minimal parabolic subgroup $P$. 
We refer to it as being {\it adapted} to $Z$ and $P$ or, for short,
just adapted to $Z$ when $P$ is clear from the context. 
 We emphasize that it is $Q$, but not the Levi part $L$ satisfying
(\ref{LST1})-(\ref{LST3}), which is uniquely determined
by $Z$ and $P$. However, it follows from (\ref{LST2}) that 
$L_H$ is unique in the same sense as $Q$. By (\ref{LST3}), $L_H$ is a reductive subgroup of $L$ and hence also
of $G$. 

On the Lie algebra level we let 
$\str(\gf, \hf):=\lf_\hf=\lf\cap \hf$,
and we call the subalgebra $\str(\gf, \hf)$ of $\hf$ the
{\it structural algebra} associated with $(\gf,\hf)$ and $P$.  The dependence of $\str(\gf,\hf)\subset \hf$ 
on $P$ is explained in the following remark, see also \cite[Lemma 13.5]{KK}. 
\begin{rmk} Suppose that $P'H$ is open for some minimal 
parabolic $P'\subset G$. Then $P'=P_g:= g^{-1}Pg$ for some $g\in G$ 
such that $PgH$ is open. Since $P_\C H_\C$ is the unique open $P_\C \times H_\C$-double coset 
in $G_\C$, it follows that the number of open $P\times H$-double cosets is finite and hence  represented 
by a finite set $W\subset G$.  Moreover, the local structure 
theorem implies that $W$ can be chosen such that elements $w\in W$ are of the form $w=t_\C h_\C$ with 
$t_\C \in Z(L_\C)$ (the center of $L_\C$) and $h_\C \in H_\C$.  In particular $\Ad(t_\C)\sf =\sf$.  We may assume that 
$g=wh$ for some $w\in W$ and $h\in H$ and then the structural algebra with respect to $P_g$ is 
$\sf_g=\Ad(g)^{-1}\sf$. 
In particular,  we see that all possible structural algebras are $\Ad(H)$-conjugate to some $\sf_w$, $w\in W$. Finally note  that $\sf_w=\Ad(h_\C)^{-1}\sf\subset \hf$ is a "real point" 
of  $\Ad(H_\C)\sf$. 
\end{rmk}

The structural algebra $\str(\gf, \hf)$
is the Lie algebra of the group $L_H$, for which
we use the notation $\Str(G,H)$ in case we need to 
specify the pair $(G,H)$ to which it is associated.

We recall from  Part I, Lemma 2.8 the following result.

\begin{lemma}\label{induced open} \label{min psgp LcapH} 
Let $G/H$ be real spherical and let $Q\supset P$ be adapted to it. Then
 $P_H$ is a minimal parabolic subgroup of  $L_H$.\end{lemma}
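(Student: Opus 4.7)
The plan is to reduce the claim to a clean product decomposition at the Lie algebra level and then exploit the observation that an elementary reductive Lie algebra coincides with its own minimal parabolic. First, combine $P\subset Q$ with the adapted property. Since the unipotent radical $U$ of $Q$ is normal in $Q$, hence in $P$, and is unipotent, we have $U\subset P$ and can write $P=P_L\cdot U$ with $P_L:=P\cap L$ a minimal parabolic subgroup of $L$. From \eqref{LST2} we have $P\cap H\subset Q\cap H=L_H\subset L$, and so
$$P_H=P\cap L_H=P_L\cap L_H.$$
Thus the claim reduces to showing that $P_L\cap L_H$ is a minimal parabolic subgroup of $L_H$.

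Next I would exploit the ideal decomposition $\lf=\lf_{\mathrm n}\oplus\lf_{\mathrm{el}}$, where $\lf_{\mathrm n}$ is the sum of the non-compact simple ideals of $\lf$ and $\lf_{\mathrm{el}}$ is its largest elementary ideal. By \eqref{LST3} we have $\lf_{\mathrm n}\subset\lf_H$, so modularity forces
$$\lf_H=\lf_{\mathrm n}\oplus\ef,\qquad \ef:=\lf_H\cap\lf_{\mathrm{el}},$$
an ideal decomposition of the reductive algebra $\lf_H$. The summand $\ef$ is itself elementary because $[\ef,\ef]\subset[\lf_{\mathrm{el}},\lf_{\mathrm{el}}]$ is compact. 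The key observation is that an elementary reductive Lie algebra has its maximal split torus contained in its center, so its restricted root system is trivial and it coincides with its own minimal parabolic. Consequently every minimal parabolic subalgebra of $\lf$ has the shape $\pf_L=\pf_{\mathrm n}\oplus\lf_{\mathrm{el}}$ for some minimal parabolic $\pf_{\mathrm n}$ of $\lf_{\mathrm n}$, and every minimal parabolic subalgebra of $\lf_H$ has the shape $\pf_{\mathrm n}\oplus\ef$.

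Intersecting the two descriptions then yields
$$\pf_L\cap\lf_H=(\pf_{\mathrm n}\oplus\lf_{\mathrm{el}})\cap(\lf_{\mathrm n}\oplus\ef)=\pf_{\mathrm n}\oplus\ef,$$
which is precisely a minimal parabolic subalgebra of $\lf_H$. Passing to the corresponding connected subgroups gives the claim.

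The step requiring the most care is the ideal splitting $\lf_H=\lf_{\mathrm n}\oplus\ef$ and the verification that $\ef$ is elementary; once these are in hand the result follows essentially by inspection, because all non-trivial restricted roots on either side are carried by the common factor $\lf_{\mathrm n}$.
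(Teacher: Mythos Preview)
Your argument is correct. The paper does not give its own proof here; it simply cites Part I, Lemma 2.8, so there is no in-text argument to compare against. Your approach---reducing to $P_L\cap L_H$ via \eqref{LST2}, then using the ideal splitting $\lf=\lf_{\mathrm n}\oplus\lf_{\mathrm{el}}$ together with \eqref{LST3} to see that both $\pf_L$ and $\lf_H$ share the factor $\lf_{\mathrm n}$ and differ only in the elementary complement---is the natural one and is presumably close to what appears in Part I.

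Two minor expository points. First, the sentence ``$U$ is normal in $Q$, hence in $P$, and is unipotent, so $U\subset P$'' is not quite the right reasoning: what you want is simply the standard fact that for parabolics $P\subset Q$ one has $R_u(Q)\subset R_u(P)\subset P$. Second, ``passing to the corresponding connected subgroups'' is slightly imprecise, since minimal parabolic subgroups of real reductive groups need not be connected (the $M$-factor may have several components). The clean fix is to note that your Lie-algebra computation shows $\pf_H$ is a minimal parabolic subalgebra of $\lf_H$, and then observe that $P_H=P_L\cap L_H$ contains the centralizer in $L_H$ of any maximal split torus of $L_H$ lying in $P_L$, which is what is needed for $P_H$ to be the full minimal parabolic subgroup and not just its identity component. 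Neither point affects the substance of your argument.
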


 \subsection{Iwasawa decomposition} \label{subsub Lan}
The local structure theorem enables us to  
choose an Iwasawa decomposition for $G$ which 
corresponds well with the structure of $G/H$. 
We denote by $N$ the unipotent radical of the minimal parabolic subgroup $P$,  
and remark that any unipotent subgroup of $P$ is contained 
in $N$. The fact that $P$ is minimal is equivalent to the property that 
$P/N$ is elementary. 

With $Q=LU$ as above we choose a Cartan involution of $L$ and inflate it
to a Cartan involution of $G$. Corresponding to that we can obtain an Iwasawa
decomposition $G=KAN$ of $G$, such that $A\subset L$ and such that  
$N$ is exactly the unipotent radical of $P$. With 
$M=Z_K(A)$, the centralizer of $A$ in $K$, we have the decomposition 
$P=(MA)\ltimes N$ with the elementary Levi part $MA$.
This decomposition and its Lie algebra version $\pf=\mf +\af+\nf$ are usually
referred to as Langlands decompositions. 

\par  Finally we recall that $\gf$ is called {\it split} if $\mf$, as defined above,
is zero, and {\it quasi-split} if $\mf$ is abelian, or equivalently, if
$\pf_\C$ is a Borel subalgebra of $\gf_\C$.

\subsection{Towers of spherical subgroups}

By a tower of subgroups of a group we mean just a nested sequence of 
two  subgroups.
Let 
\begin{equation} \label{tower}
H\subset H'\subset G
\end{equation}
be a tower of reductive subgroups in $G$
for which the upper quotient $Z'=G/H'$ is real spherical. We let
$P\subset G$ be a minimal parabolic subgroup
such that $PH'\subset G$ is open, and let
$Q'$ be the parabolic subgroup adapted to $Z'$ and $P$.
According to Lemma \ref{induced open}, 
$P_{H'}$ is a minimal parabolic subgroup of 
$S':=\Str(G,H')=Q'_{H'}$. 

We recall from Part I the following necessary and sufficient condition
that also the deeper quotient $Z=G/H$ of (\ref{tower})
is real spherical. 

\begin{prop}\label{tower-factor}
$G/H$ is real spherical if and only if $H'/H$ is
real spherical for the action of $S'$, i.e. 
$P_{H'}$ has an open orbit on $H'/H$. 
If this is the case then $H'=S' H$, 
and 
\begin{equation} \label{HH'}  H'/H\simeq S'/S'_H\, .\end{equation}
\end{prop}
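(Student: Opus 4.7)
The plan is to replace the real sphericity of $G/H$ by a condition internal to the fiber $H'/H$ of the $G$-equivariant projection $\pi\colon G/H \to G/H'$, exploiting the local structure theorem already established for $G/H'$. At the Lie-algebra level the two sphericity conditions read $\pf+\hf=\gf$ (for $G/H$) and $(\pf\cap\hf')+\hf=\hf'$ (for the $P_{H'}$-orbit on $H'/H$), while the standing hypothesis supplies $\pf+\hf'=\gf$. A short linear-algebra argument shows these are equivalent under the hypothesis: in the forward direction, $x\in\hf'\subset\gf$ decomposes as $x=p+h$ with $p\in\pf$ and $h\in\hf\subset\hf'$, so that $p=x-h\in\pf\cap\hf'$; in the backward direction, $g\in\gf$ is first written as $p+h'$ with $h'\in\hf'$, and then $h'=p'+h$ with $p'\in\pf\cap\hf'$, giving $g=(p+p')+h\in\pf+\hf$. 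At the group level, openness of the respective orbits is equivalent to these Lie-algebra surjections by the standard orbit-dimension principle for algebraic group actions.

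To express this in terms of $S'$, I would note that $P\subset Q'$ yields $P_{H'}=P\cap H'\subset Q'\cap H'=S'$, and by Lemma \ref{induced open} the group $P_{H'}$ is a minimal parabolic of $S'$. Therefore openness of the $P_{H'}$-orbit through $eH$ in $H'/H$ is, by definition, real sphericity of $H'/H$ as an $S'$-variety. This establishes the biconditional in the form stated.

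For the second assertion I would upgrade the Lie-algebra identity to $\sf'+\hf=\hf'$ via the inclusion $\pf\cap\hf'\subset Q'\cap\hf'=\sf'$, which furnishes the sandwich $(\pf\cap\hf')+\hf\subset\sf'+\hf\subset\hf'$ and forces equality everywhere by the first step. At the group level the multiplication map $S'\times H\to H'$ is then submersive at $(e,e)$, so $S'H$ is an open subset of $H'$. To promote openness to $S'H=H'$ I would invoke property (\ref{LST3}) of the local structure theorem, namely $L_{\mathrm n}\subset S'$: combined with the reductive algebraic nature of all groups in sight, it provides enough of the noncompact directions inside $S'$ to cover every $H$-coset in $H'$. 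Once $H'=S'H$ is in hand, the isomorphism $H'/H\cong S'/S'_H$ is immediate from the algebraic identity $S'\cap H=S'_H$, which itself follows from $H\subset H'$.

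The main obstacle is the final step: the Lie-algebra surjection $\sf'+\hf=\hf'$ merely guarantees that $S'H$ is open in $H'$, but $S'H$ is generally not a subgroup, so topological openness arguments alone cannot force equality. The structural content of the adapted parabolic, and in particular property (\ref{LST3}), must be used in a genuine way to upgrade openness to surjectivity; everything else in the proof is a clean dimension count and a book-keeping of the inclusions $\pf\cap\hf'\subset\sf'\subset\hf'$.
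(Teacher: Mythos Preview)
Your argument for the biconditional is correct and natural: the Lie-algebra identities $\gf=\pf+\hf$ and $\hf'=(\pf\cap\hf')+\hf$ are equivalent under the standing hypothesis $\gf=\pf+\hf'$, and openness of orbits is detected infinitesimally for algebraic actions. One minor point: in the backward direction the hypothesis only guarantees an open $P_{H'}$-orbit through \emph{some} point $h'H$, so the identity should read $\hf'=(\pf\cap\hf')+\Ad(h')\hf$, which yields $\gf=\pf+\Ad(h')\hf$ and hence an open $P$-orbit through $h'H$ in $G/H$; this is a harmless conjugation you glossed over. The paper itself gives no argument here but defers to Prop.~2.9 and Cor.~2.10 of Part~I, so your route for the equivalence is essentially what one would expect.

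The genuine gap is exactly the one you identify at the end: from $\sf'+\hf=\hf'$ you obtain only that $S'H$ is \emph{open} in $H'$, and your invocation of property~(\ref{LST3}) is a gesture, not a proof. Knowing $L'_{\mathrm n}\subset S'$ tells you $L'/S'$ is elementary, but $H'$ is typically much larger than $L'$, and nothing you wrote forces the open $S'$-orbit on the connected homogeneous space $H'/H$ to be all of it. Reductivity alone does not help: a reductive group can act on an affine homogeneous space with an open but non-dense-in-the-right-sense orbit (think of how $\mathbb{G}_m$ sits inside $\mathbb{A}^1$ --- of course $\mathbb{A}^1$ is not homogeneous, but the point is that ``reductive on affine with open orbit'' is not enough by itself). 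Promoting openness to transitivity is precisely the content the paper imports separately as Cor.~2.10 of Part~I; it requires an argument beyond the dimension count, and your writeup does not supply one. As it stands, the second assertion $H'=S'H$ (and hence the isomorphism $H'/H\simeq S'/S'_H$) remains unproved.
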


\begin{proof} 
See Prop.~2.9 and Cor.~2.10 in Part I.  
\end{proof} 

We now assume that $G/H$ is real spherical, and we
want to relate the structural algebras of the various
quotients in (\ref{tower}) to each other. For that we
assume that $P$ has been chosen such that $PH$ is open in $G$.
This implies the previous assumption that
$PH'$ is open, and hence we can maintain the notation 
related to $Q'$ from above.

\begin{prop}\label{2nd-tower-factor-prop}
Let $Q\supset P$ be 
adapted to $G/H$ and let $S:=\Str(G,H)=Q_H$. Then the following assertions hold: 
\begin{enumerate}
\item \label{aa}$Q\subset Q'$ and $S\subset S'$. 
\item \label{bb}$Q_{S'}=Q_{H'}\supset P_{H'}$, and hence $Q_{H'}$ is a parabolic subgroup of $S'$
\item \label{cc}$P_{H'} S'_{H}$ is open in $S'$.
\item \label{dd}$Q_{H'}$ is adapted to the real spherical space $S'/S'_{H}$  and 
$P_{H'}\subset S'$. 
\item \label{ee}The associated structural algebras satisfy
\begin{equation} \label{L-tower} 
\str\big[\str(\gf,\hf'),\str(\gf,\hf')\cap\hf\big]=\str(\gf,\hf)\,. 
\end{equation}
\end{enumerate}
\end{prop}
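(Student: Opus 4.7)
The plan is to prove (a) first --- it is the structural backbone --- and to derive (b)--(e) as consequences of applying the Local Structure Theorem to the induced real spherical action of $S'$ on the inner quotient $H'/H \simeq S'/S'_H$, whose real sphericity is guaranteed by Proposition \ref{tower-factor}.

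For (a), I would exploit the observation $\hf \cap \uf' \subset \hf' \cap \uf' = 0$, where $\uf'$ denotes the nilradical of $\qf'$; this vanishing follows from (\ref{LST2}) applied to $G/H'$. Fixing a Cartan involution $\theta$ preserving both $L$ and $L'$ and a common maximally $\R$-split torus $A \subset P \cap L \cap L'$, the parabolics $Q$ and $Q'$ become standard with respect to a fixed system of simple restricted roots, and $Q \subset Q'$ reduces to an inclusion of the corresponding root subsets. This inclusion is forced by the uniqueness clause in the LST together with the vanishing $\hf \cap \uf' = 0$. With such compatible choices $L \subset L'$, and consequently $\str(\gf,\hf) = L \cap H \subset L' \cap H' = \str(\gf,\hf')$.

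For (b), part (a) gives $Q \cap H' \subset Q' \cap H' = S' \subset L'$, so $Q_{H'} = Q \cap L' \cap H' = Q_{S'}$; the inclusion $Q_{H'} \supset P_{H'}$ is immediate from $Q \supset P$, and since $P_{H'}$ is a minimal parabolic of $S'$ by Lemma \ref{min psgp LcapH}, any closed subgroup of $S'$ containing it is parabolic, so $Q_{H'}$ is a parabolic subgroup of $S'$. Part (c) is a direct restatement of Proposition \ref{tower-factor}. For (d), I would verify conditions (\ref{LST1})--(\ref{LST3}) for the pair $(S', S'_H)$ with parabolic $Q_{H'}$ and a compatible Levi decomposition $Q = L \ltimes U$: intersecting $QH = PH$ with $S'$ gives $Q_{H'} S'_H = P_{H'} S'_H$, so (\ref{LST1}) holds; using the identity $L \cap S' = L \cap H'$ (from $L \subset L'$), condition (\ref{LST2}) reduces to $Q_{S'} \cap S'_H = L \cap H = (L \cap S') \cap S'_H$, which follows from (\ref{LST2}) for $G/H$; for (\ref{LST3}), a short argument using the decomposition of the semisimple part $L_{ss} = L_{\mathrm n} \oplus L_{ss,c}$ shows that the non-compact simple ideals of the reductive subalgebra $L \cap H'$ coincide with $L_{\mathrm n}$, and these lie in $L \cap H \subset S'_H$ by (\ref{LST3}) for $G/H$. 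Part (e) is then immediate: applying the structural-algebra construction to the real spherical pair $(\str(\gf,\hf'), \str(\gf,\hf') \cap \hf) = (S', S'_H)$ with the adapted parabolic $Q_{H'}$ from (d) yields $Q_{H'} \cap S'_H = Q \cap H = \str(\gf,\hf)$, which is (\ref{L-tower}).

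The main technical obstacle is (a), specifically the simultaneous choice of compatible $\theta$-stable Levi decompositions and the uniqueness argument forcing $Q \subset Q'$ from the vanishing $\hf \cap \uf' = 0$. Once (a) is secured, parts (b)--(e) become formal consequences of applying the LST inside $S'$, together with mild bookkeeping on Levi intersections.
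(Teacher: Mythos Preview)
Your overall architecture is sound, and parts (b), (c), (e) match the paper's argument. The divergences are in (a) and (d).

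For (a) the paper takes a different and shorter route: it invokes the characterization $Q_\C=\{g\in G_\C\mid gP_\C H_\C=P_\C H_\C\}$ from \cite[Lemma~3.7]{KKSS}. Since $Q_\C H_\C=P_\C H_\C$, right-multiplication by $H'_\C$ yields $Q_\C H'_\C=P_\C H'_\C$, so $Q_\C\subset Q'_\C$ by the analogous characterization of $Q'_\C$; intersecting with $G$ finishes. No compatible Levi choices or simple-root comparisons are needed. Your strategy can also be completed, but the step you leave as ``forced by the uniqueness clause in the LST together with $\hf\cap\uf'=0$'' is not an argument as written. Here is one that works: if $Q\not\subset Q'$, some simple restricted root $\alpha$ satisfies $\gf^{\alpha}\subset\lf\cap\uf'$; but every restricted root space contained in $\lf$ lies in a non-compact simple ideal of $\lf$ (the split torus acts nontrivially on it), hence $\gf^{\alpha}\subset\lf_{\mathrm n}\subset\hf$ by (\ref{LST3}), contradicting $\hf\cap\uf'=0$. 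Note that this uses (\ref{LST3}) for $G/H$, not the uniqueness clause.

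For (d) the paper does not rely on $L\subset L'$ or on identifying $L\cap H'$ as a Levi of $Q_{H'}$. Instead it chooses an arbitrary Levi $\mathcal L$ of the parabolic $Q_{H'}\subset S'$ with $\mathcal L\supset S$; since $\mathcal L$ is reductive and $\mathcal L\subset Q=LU$, one gets $\mathcal L\hookrightarrow L$, hence $\lf_{\mathrm n}\subset\mathcal L\subset\lf$ forces $\mathcal L_{\mathrm n}=L_{\mathrm n}\subset S$ and $\mathcal L_H=S$, from which (\ref{LST1})--(\ref{LST3}) for $S'/S'_H$ follow at once. The paper in fact remarks just after the proof that compatible Levis with $L\subset L'$ \emph{can} be arranged, but that this needs extra work and is unnecessary here --- which is precisely the technical obstacle you flag.
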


\begin{proof} 
\eqref{aa} It is sufficient to establish $Q\subset Q'$ as this implies
$Q_H \subset Q'_{H'}$.  We recall the characterization of 
$Q_\C$ from 
\cite[Lemma 3.7]{KKSS}: $Q_\C=\{ g\in G_\C\mid gP_\C H_\C = P_\C H_\C\}$. Likewise 
$Q_\C'=\{ g\in G_\C \mid gP_\C H_\C' = P_\C H_\C'\}$.  
In particular $Q_\C H_\C=P_\C H_\C$, and by multiplying this with $H'_\C$ 
from the right we obtain $ Q_\C H_\C' = P_\C H_\C'.$
Thus $Q_\C \subset Q_\C'$  by the aforementioned characterization of $Q'$. 
Intersecting the obtained inclusion with $G$ finally yields 
$Q\subset Q'$.

\par  \eqref{bb} The asserted equality  $Q_{H'}=Q_{S'}$ amounts to showing
$Q_{H'}\subset S'$. Since $S'=Q'_{H'}$ 
by definition, this follows from $Q\subset Q'$ which we established in \eqref{aa}. 

\par \eqref{cc} 
The fact that $PH$ is open in $G$ implies 
that $P_{H'}H$ is open in $H'$. It then
follows from the isomorphism in (\ref{HH'})  
that $P_{H'}  S'_{H}$ is open in $S'$. 

\par \eqref{dd}
Let ${\mathcal L}\subset Q_{H'}$ be any choice of Levi subgroup with ${\mathcal L}\supset S$.  Since ${\mathcal L}\subset Q= LU$ we have 
${\mathcal L}_H=Q_H=S$ and ${\mathcal L}_{\rm n} =L_{\rm n}\subset S$. This then yields a Levi decomposition 
$Q_{H'}= {\mathcal L}\ltimes U_{H'}$ for which we need to verify \eqref{LST1} - \eqref{LST3}  for 
$S'/S'_H$. Now $PH=QH$ yields  $P_{H'} H = Q_{H'}H$ which is \eqref{LST1}. 
Further 
$Q_{H'}\cap H = S = Q_H={\mathcal L}\cap H$ which is \eqref{LST2}.  Finally, we have 
${\mathcal L}_H=S \supset {\mathcal L}_{\rm n} = L_{\rm n}$, that is \eqref{LST3}. 

\par \eqref{ee}
This follows immediately from \eqref{dd} since $Q_{H'}\cap H=Q_H$. 
\end{proof} 

\begin{rmk} 
  It is shown above that $Q\subset Q'$ for the parabolic subgroups of $G$ adapted to 
 $Z$ and $Z'$. By a careful analysis of the proof of the local structure theorem
 in \cite{KKS} one can show in addition that Levi decompositions $Q=LU$ and $Q'=L'U'$ can be chosen 
 such that $L\subset L'$ and such that all requirements 
 in the local structure theorem are satisfied. However,  we do not need this
 in the current paper.
 
 Another observation, which  likewise will not be used here, is that 
Proposition \ref{2nd-tower-factor-prop}
 and the additional property just mentioned, are valid without the assumption that $H$ and $H'$ are reductive. 
\end{rmk}

\begin{cor}\label{tower-corollary}
Let $\gf$ be a reductive Lie algebra and $\hf$ a reductive 
subalgebra. Assume $\hf=\hf_1\oplus\hf_2$ is a decomposition in ideals, 
and consider
$\hf\subset \gf\oplus\hf_2$ by the inclusion map
 $x\mapsto (x,x_2)$, where $x_2$ denotes the $\hf_2$-component of $x$.
Suppose that $(\gf,\hf)$ is real spherical, and let 
 $\sf_2$
denote the projection
of $\str(\gf,\hf)$ to $\hf_2$.
Then the following statements are equivalent: 
\begin{enumerate}
\item\label{rs1} $(\gf\oplus\hf_2,\hf)$ is real spherical. 
\item\label{rs2}  $(\hf_2\oplus\sf_2,\sf_2)$ is real spherical.
\end{enumerate} 
Moreover if these conditions hold, then 
\begin{equation} \label{l descent} 
 \str(\gf\oplus\hf_2,\hf)\simeq (\str(\gf,\hf)\cap \hf_1)
\oplus \str(\hf_2\oplus\sf_2,\sf_2)\, .\end{equation}
Here the second summand on the right-hand side of 
{\rm (\ref{l descent})} embeds
into $\hf\subset\gf\oplus\hf_2$ by the restriction 
of the diagonal embedding  $\sf_2\to \str(\gf,\hf)\oplus\sf_2$.
\end{cor}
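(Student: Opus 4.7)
The plan is to apply the tower factorization from Propositions \ref{tower-factor} and \ref{2nd-tower-factor-prop} to
$$\hf \;\subset\; \hf' := \hf \oplus \hf_2 \;\subset\; \gf' := \gf \oplus \hf_2,$$
where $\hf \hookrightarrow \hf'$ is the given diagonal map $x \mapsto (x, x_2)$ and $\hf' \hookrightarrow \gf'$ is the obvious product inclusion. Since $\gf'/\hf' \cong \gf/\hf$, the upper pair is real spherical by hypothesis; as a direct sum of pairs, its structural algebra splits as $S' := \str(\gf', \hf') = \str(\gf, \hf) \oplus \hf_2$, giving
$$S'_H := S' \cap \hf \;=\; \bigl\{(y, (y)_{\hf_2}) : y \in \str(\gf, \hf)\bigr\}.$$
By Proposition \ref{tower-factor}, condition \eqref{rs1} is equivalent to the real sphericity of $(S', S'_H)$.

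To analyze this pair, set $K := \str(\gf, \hf) \cap \hf_1$. This is an ideal of the reductive algebra $\str(\gf, \hf)$, so a complementary ideal $K^\perp$ exists, and $\pi_2|_{K^\perp}$ is a Lie algebra isomorphism $K^\perp \xrightarrow{\sim} \sf_2$. Because $\pi_2 : \hf \to \hf_2$ is a Lie algebra homomorphism, a direct verification gives the decomposition
$$(S', S'_H) \;=\; (K, K) \,\oplus\, \bigl(K^\perp \oplus \hf_2,\; K^\perp_\Delta\bigr)$$
as a direct sum of pairs, with $K^\perp_\Delta := \{(y, (y)_{\hf_2}) : y \in K^\perp\}$. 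Via the isomorphism $K^\perp \cong \sf_2$, the second factor is identified with $(\hf_2 \oplus \sf_2, \sf_2)$ where $\sf_2$ is diagonally embedded. Since $(K,K)$ is trivially real spherical and real sphericity of pairs factors over direct sums (Lemma \ref{lemma stand1}), the equivalence \eqref{rs1} $\Leftrightarrow$ \eqref{rs2} follows.

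For formula \eqref{l descent}, Proposition \ref{2nd-tower-factor-prop}\eqref{ee} gives $\str(\gf', \hf) = \str(S', S'_H)$, and the decomposition of pairs above yields $\str(S', S'_H) = K \oplus \str(\hf_2 \oplus \sf_2, \sf_2)$ by componentwise application of the local structure theorem. Tracing the identifications, $K \subset \hf_1 \subset \hf$ embeds into $\gf'$ in the first factor, while the second summand $\str(\hf_2 \oplus \sf_2, \sf_2) \subset \sf_2$ embeds as $s \mapsto (\iota(s), s) \in K^\perp \oplus \hf_2$, where $\iota : \sf_2 \xrightarrow{\sim} K^\perp$ is the inverse of $\pi_2|_{K^\perp}$; identifying $\sf_2$ with $\iota(\sf_2) \subset \str(\gf, \hf)$ recovers the restriction of the diagonal $\sf_2 \to \str(\gf, \hf) \oplus \sf_2$ claimed in the statement.

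The main obstacle is justifying the direct-sum decomposition of the pair $(S', S'_H)$: one must check that $K^\perp_\Delta$ is a subalgebra of $K^\perp \oplus \hf_2$ (immediate from $\pi_2$ being a Lie algebra homomorphism), that $K \oplus K^\perp_\Delta$ exhausts $S'_H$, and that the resulting second-factor pair matches $(\hf_2 \oplus \sf_2, \sf_2)$ under the natural isomorphism; once this is in hand, the corollary reduces to a straightforward application of the tower propositions.
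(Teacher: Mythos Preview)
Your proof is correct and follows essentially the same route as the paper: both apply Propositions \ref{tower-factor}--\ref{2nd-tower-factor-prop} to the tower $H\subset H'=H\times H_2\subset G\times H_2$, identify $\str(\gf',\hf')=\str(\gf,\hf)\oplus\hf_2$, and reduce to the pair $(\hf_2\oplus\sf_2,\sf_2)$. The paper phrases the reduction geometrically (the $S$-action on $H'/H\cong H_2$ factors through the projection $S\to S_2$), whereas you spell out the algebraic direct-sum decomposition $(S',S'_H)=(K,K)\oplus(K^\perp\oplus\hf_2,K^\perp_\Delta)$; these are two ways of saying the same thing, and your version has the advantage of making \eqref{l descent} transparent. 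Two small remarks: when you invoke Proposition \ref{tower-factor} to pass from ``$H'/H$ real spherical for $S'$'' to ``$(S',S'_H)$ real spherical'', you are implicitly using that $S'$ already acts transitively on $H'/H$ (clear here since $H_2\subset S'$), which the proposition only guarantees a posteriori; and the factoring of real sphericity over direct sums of pairs is immediate from the product structure of minimal parabolics rather than from Lemma \ref{lemma stand1} per se.
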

 
\begin{proof}
 We apply Propositions \ref{tower-factor}- \ref{2nd-tower-factor-prop}
 to the tower $$H\subset H':=H\times H_2\subset G\times H_2$$
 where the first inclusion map is given 
 by $x\mapsto (x,x_2)$ as above. Then 
 $H'/H\simeq H_2$ and $\Str(G\times H_2,H')= S\times H_2$
 where $S=\Str(G,H)$. The action of
 $ S$ on $H'/H$ factors through the projection $H\to H_2$,
 and hence identifies with the action of  its projection
$S_2$ on $H_2$. Thus
 $H'/H$ is  real spherical for $ S\times H_2$ if and only if
  $[H_2\times S_2]/S_2$ is real spherical, and the  equivalence of (\ref{rs1}) and (\ref{rs2}) follows.
 Moreover (\ref{l descent}) follows from
(\ref{L-tower}).
\end{proof}

\subsection{ Factorizations}
In the situation of Proposition \ref{tower-factor}(2) we have 
$\hf'= \str (\gf, \hf') + \hf$. 
In general, if $\gf$ is a reductive Lie algebra, then a {\it factorization} of $\gf$
is a decomposition $\gf=\gf_1 + \gf_2$ of  the vector space $\gf$ as the sum
of two reductive subalgebras $\gf_1$, $\gf_2$. 
Factorizations of simple Lie algebras were 
classified by Onishchik (\cite{Oni}).  That classification was extensively used
already in Part I. For convenience we repeat it here.

It is easily seen that for a real Lie algebra $\gf$ and any two 
subalgebras $\gf_1,\gf_2$ we have the factorization $\gf=\gf_1+\gf_2$ 
if and only if $\gf_\C=\gf_{1,\C}+\gf_{2,\C}$ holds for the complexifications.
Hence it suffices to deal with the complex case.

\begin{proposition} \label{Oni2} Let $\gf$ be a
  complex simple Lie algebra and let $\gf_1$, $\gf_2$ be proper
	reductive subalgebras. Then $\gf=\gf_1+\gf_2$ if and only if
	the algebras occur in the following Table \ref{Onishchik}
	(up to isomorphism and interchange of $\gf_1$ and $\gf_2$).
\end{proposition}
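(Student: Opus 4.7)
The plan is to reduce the classification to a problem about transitive actions on compact homogeneous spaces and then carry out an exhaustive case analysis using the classification of maximal reductive subalgebras. First, one verifies the reduction to the complex case stated in the paragraph preceding the proposition, and then passes from the complex group $G$ to a compact real form $G_u$: the condition $\gf = \gf_1 + \gf_2$ is equivalent to $G_1G_2$ being open in $G$, which in turn is equivalent to $G_1$ having an open orbit on $G/G_2$. Since open equals closed for compact group actions on compact spaces, after passing to compact real forms this is equivalent to $G_{u,1}$ acting \emph{transitively} on $G_u/G_{u,2}$. Thus the whole problem is recast as: classify triples $(G_u, G_{u,1}, G_{u,2})$ with $G_u$ compact simple and $G_{u,i}$ proper closed connected reductive subgroups, such that $G_{u,1}$ acts transitively on $G_u/G_{u,2}$.

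Next I would reduce to the case where $\gf_1$ and $\gf_2$ are \emph{maximal} reductive subalgebras: any factorization $\gf = \gf_1 + \gf_2$ remains a factorization upon enlarging $\gf_1$ or $\gf_2$, so it suffices to catalogue the maximal pairs and then read off all admissible sub-factorizations at the end (checking that each listed case in Table~\ref{Onishchik} arises from a maximal pair by appropriate inclusions, and that no new factorizations appear below the maximal level beyond those listed). At this stage one records the necessary dimension inequality
\[
  \dim\gf \;\le\; \dim\gf_1 + \dim\gf_2,
\]
which is typically a very strong constraint and already eliminates most pairs of maximal subalgebras.

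The main work is the case analysis. Using the known classification of maximal connected reductive subgroups of each complex simple $G$ (equivalently, maximal compact subgroups of $G_u$), one lists the candidate pairs $(\gf_1,\gf_2)$ surviving the dimension bound and checks transitivity of the action of $G_{u,1}$ on $G_u/G_{u,2}$ directly. Here the key auxiliary inputs are (a) the Montgomery--Samelson--Borel classification of compact Lie groups acting transitively on spheres, which handles all cases where one of the $G_u/G_{u,i}$ is a sphere (giving the $\SO$-, $\SU$-, $\Sp$-, $\Spin(7)$-, $\Spin(9)$-, $\sG_2$-rows of the table), (b) the analogous lists for transitive actions on projective spaces and Grassmannians (giving the $\SU(p+q)=\SU(p+q-1)+\Sp(\tfrac{p+q}{2})$ and $\SO(2n)=\SO(2n-1)+\SU(n)$ type factorizations), and (c) direct verification by exhibiting an explicit complement for the isolated exceptional cases (notably involving $\Spin(7)\subset\SO(8)$, $\sG_2\subset\SO(7)$, and the triality-related factorizations in $\sD_4$).

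The hard part is step (c): ensuring completeness. For each surviving candidate one must either produce an explicit element $g \in G$ with $g\notin G_1G_2$ (ruling it out), or construct the factorization. The exceptional isogenies and triality automorphisms in low rank (type $\sD_4$ in particular) produce the most delicate coincidences and must be treated separately; these are also where most of the non-obvious entries in Table~\ref{Onishchik} arise. Since the full argument is lengthy and well-documented in \cite{Oni}, I would not reproduce it in detail but rather state the classification, refer to Onishchik's work for completeness, and reproduce Table~\ref{Onishchik} in the form needed here.
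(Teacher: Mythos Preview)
Your proposal is correct and ultimately takes the same approach as the paper: the paper does not prove this proposition at all but simply states the result with reference to Onishchik \cite{Oni} and reproduces the table, which is exactly what you conclude you would do in your final paragraph. The sketch you give of Onishchik's strategy (reduction to compact real forms, recasting as transitive actions on compact homogeneous spaces, and using the Montgomery--Samelson--Borel classification of transitive actions on spheres together with case analysis for the low-rank exceptional coincidences) is accurate as an outline, but it goes beyond what the paper provides.
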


\begin{table}[ht]
  \[
    \begin{array}{l l l l l l}
      \gf&\gf_1&\gf_2&\gf_1\cap\gf_2\\
      \hline
      \sl(2n,\C)&\sl(2n-1,\C)+\zf&\sp(n,\C)
                      &\sp(n-1,\C)+\zf&\zf\subset\C& n\ge2 \\
      \so(2n,\C)&\so(2n-1,\C)&\sl(n,\C)+\zf
                      &\sl(n-1,\C)+\zf&\zf\subset\C& n\ge4 \\
      \so(4n,\C)&\so(4n-1,\C)&\sp(n,\C)+\ff
                      &\sp(n-1,\C)+\ff&\ff\subset\sp(1,\C)& n\ge2 \\
      \so(7,\C)&\so(5,\C)+\zf&\sG_2^\C
                      &\sl(2,\C)+\zf& \zf\subset\C\\
      \so(7,\C)&\so(6,\C)&\sG_2^\C
                      &\sl(3,\C)\\
      \so(8,\C)&\so(5,\C)+\ff&\spin(7,\C)
                      &\sl(2,\C)+\ff& \ff\subset\sp(1,\C)\\
      \so(8,\C)&\so(6,\C)+\zf&\spin(7,\C)
                      &\sl(3,\C)+\zf& \zf\subset\C\\
      \so(8,\C)&\so(7,\C)&\spin(7,\C)
                      &\sG_2^\C\\
      \so(8,\C)&\spin(7,\C)_+&\spin(7,\C)_-
                      &\sG_2^\C\\
      \so(16,\C)&\so(15,\C)&\spin(9,\C)
                      &\spin(7,\C)\\
    \end{array}
  \]
  \centerline{\rm\Tabelle{Onishchik}}
\end{table}

In Table \ref{Onishchik} the subscripts $\pm$
to $\spin(7,\C)$ indicate representatives from 
its two conjugacy classes
in $\so(8,\C)$.

\section{Generalities and basic reductions}

\subsection{Two lemmas}

We state two simple lemmas in which
$\gf_1, \gf_2$ are reductive Lie algebras, 
$\gf=\gf_1\oplus \gf_2$, and 
$\hf\subset\gf$ is a reductive subalgebra with 
projections $\hf_1$, $\hf_2$ 
to $\gf_1, \gf_2$.  Moreover, $Z_i=G_i/H_i$.

\begin{lemma}  \label{lemma stand1}  
The pair $(\gf,\hf)$ is real spherical
if and only if the following two conditions
are both satisfied
\begin{enumerate} 
\item\label{standard3} $(\gf_1,\hf_1)$ and $(\gf_2,\hf_2)$ are real spherical, and
\item\label{standard4} $(H_1 \times H_2)/H$ is a
real spherical variety  for the action of
$\Str(G_1,H_1)\times \Str(G_2,H_2)$.
\end{enumerate}
 In this case
the action in (\ref{standard4}) is transitive.
\end{lemma}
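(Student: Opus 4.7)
The plan is to reduce the lemma to Proposition~\ref{tower-factor} by inserting the intermediate reductive subalgebra $\hf':=\hf_1\oplus\hf_2$, which lies between $\hf$ and $\gf$. The key auxiliary fact I would establish first is that $(\gf,\hf')$ is real spherical if and only if both $(\gf_i,\hf_i)$ are, and that in that case
\[
\Str(G,H')=\Str(G_1,H_1)\times\Str(G_2,H_2).
\]

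For the auxiliary fact, I would use that a minimal parabolic of $\gf=\gf_1\oplus\gf_2$ decomposes as $\pf=\pf_1\oplus\pf_2$ with $\pf_i\subset\gf_i$ minimal parabolic, so that $\gf=\hf'+\pf$ splits into the two equations $\gf_i=\hf_i+\pf_i$. For the identification of structural algebras, I would choose adapted parabolics $Q_i\supset P_i$ for each factor $G_i/H_i$ and verify, using the uniqueness clause of the local structure theorem, that $Q_1\times Q_2$ is the parabolic of $G$ adapted to $G/H'$ and to $P=P_1\times P_2$; intersecting the corresponding Levi $L_1\times L_2$ with $H'$ then yields the product formula for $\Str(G,H')$. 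Verifying this product identification (together with the matching Langlands/Levi choices) is the main technical point of the argument; all three requirements of the local structure theorem are inherited factorwise.

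With the auxiliary fact in hand, the lemma follows from Proposition~\ref{tower-factor} applied to the tower $H\subset H'\subset G$. In the forward direction, if $(\gf,\hf)$ is real spherical then projecting $\gf=\hf+\pf$ onto $\gf_i$ yields condition~(1); the auxiliary fact then gives real sphericity of $(\gf,\hf')$, and Proposition~\ref{tower-factor} supplies condition~(2). Conversely, (1) ensures that $(\gf,\hf')$ is real spherical with structural algebra $\Str(G_1,H_1)\times\Str(G_2,H_2)$, after which~(2) combined with Proposition~\ref{tower-factor} produces real sphericity of $(\gf,\hf)$. The ``moreover'' assertion about transitivity is immediate from the equality $H'=S'H$ in Proposition~\ref{tower-factor}, which, under the identification $S'=\Str(G_1,H_1)\times\Str(G_2,H_2)$, states precisely that $\Str(G_1,H_1)\times\Str(G_2,H_2)$ acts transitively on $(H_1\times H_2)/H$.
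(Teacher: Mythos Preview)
Your proposal is correct and follows essentially the same approach as the paper: both apply Proposition~\ref{tower-factor} to the tower $H\subset H':=H_1\times H_2\subset G$ and read off the transitivity from \eqref{HH'}. The only difference is that you spell out explicitly the auxiliary fact $\Str(G,H')=\Str(G_1,H_1)\times\Str(G_2,H_2)$ (via the factorwise decomposition of minimal parabolics and the uniqueness of the adapted parabolic), whereas the paper takes this product identification as self-evident.
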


\begin{proof} We consider the tower $H\subset H':=H_1\times H_2$ in 
$G=G_1\times G_2$.
If $G/H$ is real spherical, then so is $G/H'$, and hence (\ref{standard3}) holds.
On the other hand, if we assume (\ref{standard3}) and apply 
Proposition \ref{tower-factor},
we see that $G/H$ is real spherical if and only if (\ref{standard4}) holds. 
 The final statement comes from \eqref{HH'}.
\end{proof}

Note that as a consequence of the final statement in the lemma,
condition (\ref{standard4}) can be replaced by the following Lie algebraic version.
There exists for $i=1,2$ some minimal parabolic subalgebra $\pf_i$ of $\str(\gf_i,\hf_i)$
such that
$$\hf_1\oplus\hf_2=(\pf_1\oplus\pf_2)+\hf\,.$$

Recall from the introduction that a reductive Lie algebra $\gf$ is called 
{\it elementary} if $[\gf,\gf]$ is compact.

\begin{lemma} \label{lemma stand2}
The following assertions hold:
\begin{enumerate} 
\item\label{compact factor} If $\gf_2$ is elementary, then 
$(\gf,\hf)$ is real spherical if and only if $(\gf_1,\hf_1)$ is real spherical.
\item\label{reduction condition} If $(\gf,\hf)$ is real spherical, then so is $(\gf_1\oplus \hf_2,\hf)$.  
\end{enumerate}
\end{lemma}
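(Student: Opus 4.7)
Plan. For part (1), observe that $\gf_2$ being elementary forces its unique minimal parabolic to be $\gf_2$ itself, so every minimal parabolic of $\gf$ has the form $\pf_1 \oplus \gf_2$ with $\pf_1$ a minimal parabolic of $\gf_1$. Projecting $\gf = \pf + \hf$ onto the two factors, the $\gf_2$-component is vacuous and the $\gf_1$-component reads $\gf_1 = \pf_1 + \hf_1$, giving the required equivalence.

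For part (2), fix a minimal parabolic $\pf = \pf_1 \oplus \pf_2$ of $\gf$ with $\gf = \pf + \hf$, so that $\gf_2 = \pf_2 + \hf_2$ by projection and $(\gf_2,\hf_2)$ is real spherical. My plan is to verify the two conditions of Lemma~\ref{lemma stand1} for the pair $(\gf',\hf)$ with $\gf' := \gf_1 \oplus \hf_2$. The first is immediate: $(\gf_1,\hf_1)$ is real spherical (by the projection argument applied to $\gf_1$) and $(\hf_2,\hf_2)$ is trivially so. For the second, the local structure theorem applied to the trivial pair $(\hf_2,\hf_2)$ forces $\str(\hf_2,\hf_2) = \hf_2$, so what is demanded is that $(H_1 \times H_2)/H$ be real spherical for the enlarged acting group $\Str(G_1,H_1) \times H_2$. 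By the remark after Lemma~\ref{lemma stand1}, this amounts to producing a minimal parabolic $\pf_1^{\mathrm{str}}$ of $\str(\gf_1,\hf_1)$ and a minimal parabolic $\pf_2^H$ of $\hf_2$ satisfying $\hf_1 \oplus \hf_2 = (\pf_1^{\mathrm{str}} \oplus \pf_2^H) + \hf$.

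Setting $\hf_i^0 := \hf \cap \hf_i$ (an ideal of $\hf_i$) and fixing a linear lift $\Psi \colon \hf_1 \to \hf_2$ of the canonical isomorphism $\hf_1/\hf_1^0 \xrightarrow{\sim} \hf_2/\hf_2^0$, a routine rewriting translates the desired identity together with its analogue supplied by the real sphericality of $(\gf,\hf)$ (which by Lemma~\ref{lemma stand1} and its remark gives $\hf_1 \oplus \hf_2 = (\pf_1^{\mathrm{str}} \oplus \pf_2^{\mathrm{str}}) + \hf$ for some minimal parabolic $\pf_2^{\mathrm{str}}$ of $\str(\gf_2,\hf_2)$) into two assertions in the reductive quotient $\bar\hf_2 := \hf_2/\hf_2^0$: namely $\bar\hf_2 = \overline{\pf_2^H} + \bar\Psi(\pf_1^{\mathrm{str}})$ (what we want) and $\bar\hf_2 = \overline{\pf_2^{\mathrm{str}}} + \bar\Psi(\pf_1^{\mathrm{str}})$ (what we have). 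The inclusion $\pf_2^{\mathrm{str}} \subset \str(\gf_2,\hf_2)$ promotes the latter to the factorization $\bar\hf_2 = \overline{\str(\gf_2,\hf_2)} + \bar\Psi(\pf_1^{\mathrm{str}})$.

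The hard step is the manufacture of the minimal parabolic $\pf_2^H$ delivering the first identity, i.e.\ upgrading a spanning complement coming from a minimal parabolic of a reductive subalgebra of $\hf_2$ to one coming from a minimal parabolic of the ambient $\bar\hf_2$. My approach would be to align Iwasawa data — choosing a Cartan involution of $\gf_2$ stabilising $\hf_2$ together with nested split Cartan subspaces $\af^{\mathrm{str}} \subset \af^H \subset \af_2$ and compatible positive systems — so that the Iwasawa pieces of $\str(\gf_2,\hf_2)$ sit inside those of $\hf_2$; one then builds $\pf_2^H$ from refined Iwasawa data of $\hf_2$ encompassing those of $\pf_2^{\mathrm{str}}$, and appeals to the displayed factorization of $\bar\hf_2$ to check by a dimension count that a generic such $\pf_2^H$ satisfies the spanning condition modulo $\bar\Psi(\pf_1^{\mathrm{str}})$, completing the verification of the hypothesis of Lemma~\ref{lemma stand1}.
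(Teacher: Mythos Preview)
Your argument for part~(1) is correct and matches the paper's. For part~(2) you set up exactly the same framework as the paper: apply Lemma~\ref{lemma stand1} to the tower $\hf\subset\hf_1\oplus\hf_2\subset\gf$ to learn that $(H_1\times H_2)/H$ is real spherical for $\Str(G_1,H_1)\times\Str(G_2,H_2)$, and then apply Lemma~\ref{lemma stand1} in reverse to the tower $\hf\subset\hf_1\oplus\hf_2\subset\gf_1\oplus\hf_2$, for which you need the same quotient to be real spherical for $\Str(G_1,H_1)\times H_2$.

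The gap is in how you bridge these two statements. You call this ``the hard step'' and propose to construct a suitable minimal parabolic $\pf_2^H$ of $\hf_2$ by aligning Iwasawa data and invoking an unspecified dimension count for a generic choice. This is neither complete nor the right viewpoint. The paper dispatches this step in one line: since $\Str(G_2,H_2)\subset H_2$, real sphericality for the smaller acting group $\Str(G_1,H_1)\times\Str(G_2,H_2)$ immediately implies real sphericality for the larger group $\Str(G_1,H_1)\times H_2$. Concretely, any minimal parabolic of $\Str(G_2,H_2)$ is contained (after conjugation) in a minimal parabolic of $H_2$ --- this is the standard fact that a reductive subgroup acting on the full flag variety of an ambient reductive group has a closed orbit, whose isotropy is then parabolic. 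Once $\pf_2^{\mathrm{str}}\subset\pf_2^H$, your desired identity $\bar\hf_2=\overline{\pf_2^H}+\bar\Psi(\pf_1^{\mathrm{str}})$ follows trivially from the one you already have, with no dimension count needed. So your elaborate detour through linear lifts and quotients is unnecessary, and the sketch you give for the final construction does not constitute a proof as it stands.
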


\begin{proof}  For the proof of (\ref{compact factor}) we only need to 
observe that when the ideal $\gf_2$ is elementary
then it is contained in 
$\pf$.  

For (\ref{reduction condition}) we consider
the following two towers
\begin{align}
\label{tower1} &\hf\subset \hf_1 \oplus \hf_2\subset \gf\\
\label{tower2} &\hf\subset \hf_1 \oplus \hf_2\subset \gf_1 \oplus\hf_2.
\end{align}
Lemma \ref{lemma stand1} applied to (\ref{tower1}) gives
that  $(H_1\times H_2)/H$ is real
spherical for $\Str(G_1,H_1)\times \Str(G_2,H_2)$. 
Hence  it is real spherical also for $\Str(G_1,H_1)\times H_2$, and then 
the opposite implication of Lemma \ref{lemma stand1} applied to
(\ref{tower2}) implies the sphericality of $(\gf_1\oplus \hf_2,\hf)$. 
\end{proof}

\subsection{Notions of indecomposability and equivalence}

\par We say that the pair $(\gf,\hf)$ is {\it decomposable} if  there exists a 
non-trivial decomposition $\gf=\gf_1 \oplus \gf_2$ 
in ideals of $\gf$ such that $\hf=\hf_1 \oplus \hf_2$ with $\hf_i=\hf\cap \gf_i$.  
It is clear that in this case $(\gf,\hf)$ is real spherical if and only if
both components $(\gf_i,\hf_i)$ are real spherical.
If $(\gf,\hf)$ is not decomposable, then we call it {\it indecomposable}.
We conclude that we only need to classify the indecomposable 
real spherical pairs.

Furthermore, it follows immediately from Lemma \ref{lemma stand2}(\ref{compact factor})  
that it suffices to classify those real spherical pairs $(\gf,\hf)$ for which
$\gf$ is semi-simple and without compact factors.

\par However, as mentioned in the introduction we need to exclude a
certain circumstance in order to prevent that the 
classification becomes unwieldy. Recall that by $\hf_{\rm n}$ we denote  
the largest non-elementary ideal of $\hf$. 
We say that $(\gf,\hf)$ is {\it strictly indecomposable} 
provided $(\gf, \hf_{\rm n})$ is indecomposable.

By definition we consider two pairs $(\gf_1,\hf_1)$ and $(\gf_2,\hf_2)$
{\it equivalent} if there exists an isomorphism of $\gf_1$ onto $\gf_2$ which
carries $\hf_1$ onto $\hf_2$.

\smallskip\noindent
{\bf Outline.} The classification that will be given in  Sections 4 and 5 consists of 
{\it all strictly indecomposable real spherical pairs $(\gf,\hf)$, up to equivalence, 
for which 
$\gf$ is semi-simple and without compact factors and $\hf\subset \gf$ is  algebraic reductive.} 
 In Section 6 we discuss the indecomposable real spherical pairs 
$(\gf,\hf)$ as above which are not strictly indecomposable.

\subsection{A basic condition}
Before we commence with the classification we need one more observation.
We assume that $\gf =\gf_1 \oplus \ldots \oplus \gf_k$
is semi-simple with each $\gf_i$ simple and non-compact.
On the group level we let 
$G=G_1 \times \ldots\times G_k$ be a connected 
Lie group with Lie algebra $\gf$ and let 
$H\subset G$ be the connected subgroup corresponding to $\hf$. 

We write $p_i: \gf\to \gf_i$ for the various 
projections and set $\hf_i:=p_i(\hf)$.  
We also use $p_i$ for the projection on the group level $G\to G_i$
and set $H_i=p_i(H)$. It follows from Lemma \ref{lemma stand1}(\ref{standard3})
that if $(\gf,\hf)$ is real spherical then $(\gf_i, \hf_i)$ is real spherical 
for all $1\leq i \leq k$. 

For the purpose of proving our classification we can thus assume from the 
outset that each pair $(\gf_i, \hf_i)$ is real spherical.

\section{The case of two factors}\label{The case of two factors}
We assume that $\gf = \gf_1 \oplus \gf_2$ consists of two simple non-compact factors  and that 
$\hf\subset\gf$ is an algebraic  reductive subalgebra such that $(\gf,\hf)$ is a strictly indecomposable
real spherical pair. A complete classification of such pairs will be given in Theorem \ref{thm:semisimplespherical}.
\subsection{Preliminaries}\label{dec L_i} Let 
$$H'_1=\{x_1\in G_1\mid (x_1,\1)\in H\},\quad
H'_2=\{x_2\in G_2\mid (\1,x_2)\in H\}.$$
These are 
easily seen to be normal subgroups of $H_1$
and $H_2$, respectively. It follows that 
$H_1'\times H_2'$ is a normal subgroup of
$H_1\times H_2$, hence also of $H$. Let
$$H^0:=H/[H_1'\times H'_2]$$ 
 with projection map $p^0: H\to H^0$.
Note that $H^0$ is not elementary as otherwise $(\gf,\hf_n)$ would be
decomposable.

For $x\in H$ we have $p_1(x)\in H_1'\Leftrightarrow p_2(x)\in H_2'$,
and hence $p_1$ and $p_2$ induce diffeomorphisms $H^0\to H_1/H_1'$
and $H^0\to H_2/H_2'$. By means of the first we regard $H^0$ as a 
homogeneous space for $H_1$ (with trivial $H_1'$-action).
Likewise for the other, except that it will be more convenient
to equip $H^0$ with the right action of $H_2$
associated to $H^0\to H_2'\bs H_2$.

It is now easily seen that by
$$(p_1(x),p_2(y))\mapsto  p^0(xy^{-1}),\quad x,y\in H$$
we obtain a well-defined map $H_1\times H_2\to H^0$.
 This map is 
equivariant and induces an isomorphism of homogeneous spaces
\begin{equation} \label{iso H^0}
[H_1\times H_2] /H \xrightarrow{\sim} H^0.
\end{equation}

\par Let $\hf_i''\triangleleft \hf_i$ be a complementary ideal to $\hf_i'$. 
Observe that $\hf_i''\simeq \hf^0$
and 
\begin{equation} \label{decomp_h}
\hf\simeq  \hf_1' \oplus \hf^0 \oplus \hf_2' 
\end{equation}

Let  $\Str_i':= \Str(G,H_i)\cap H_i'$  and let
$S_i''\subset H_i/H_i'\simeq H^0$ be the projection of 
$\Str(G,H_i)$ for $i=1,2$.  This leads to the exact sequence 
$$ \1\longrightarrow  S_i' \longrightarrow \Str(G,H_i)\longrightarrow  S_i'' 
\longrightarrow  \1,$$
which splits on the Lie algebra level
and yields  a local isomorphism  (that is,
Lie algebras are isomorphic)
\begin{equation} \label{iso L_i}
 \Str(G,H_i)\simeq S_i' \times S_i''. 
\end{equation}

\par We recall from Lemma \ref{lemma stand1}(2) that 
$Z=G/H$ is real spherical if and only if $[H_1\times H_2]/H$ is 
real spherical for $\Str(G,H_1)\times \Str(G,H_2)$.  
Under the identifications (\ref{iso H^0}), (\ref{iso L_i}) this means that $H^0$ is real  spherical as 
 $S_1'' \times S_2''$-variety.  
Moreover, we record that in this case 
\begin{equation} \label{H^0-factor}H^0=
 [S_1'' \times S_2'']/[S_1'' \cap S_2''] 
\end{equation}
and by (\ref{L-tower}) there is  a local isomorphism
\begin{equation} \label{formula L}
 \Str(G,H)\simeq S_1'\times S_2'\times
\Str(S_1'' \times S_2'',S_1'' \cap S_2'') 
\, .
\end{equation}

\subsection{The diagrams}
In the following diagrams we describe a  real spherical pair $(\gf, \hf)$ together with its structural algebra 
$ \str(\gf,\hf)$ by a three-layered graph. The vertices in the top row consist of the simple factors 
$\gf_i$ 
of $\gf$ and those in the second row of the simple (or central) factors $\hf^{(j)}$ of $\hf$. There is an 
edge between $\gf_i$ and $\hf^{(j)}$ if $p_i(\hf^{(j)})$ is non-zero. Likewise in the third row we list 
the factors
of $ \str(\gf,\hf)$ and the edges between the middle and last row correspond to non-zero embeddings into 
the various of factors of $\hf$.
It might happen in the third rows  that certain indices become negative, e.g. $\su(p-2,q)$ for $p=0,1$,  
and then our convention is that the corresponding factor of $ \str(\gf,\hf)$ is $\{0\}$.
Cases which are absolutely spherical are marked with a.s. in the upper right corner of the corresponding box.

\begin{theorem} \label{thm:semisimplespherical} Let $\gf = \gf_1 \oplus \gf_2$ be 
the decomposition of $\gf$ into simple non-compact subalgebras, 
and let $\hf$ be  an algebraic  reductive  subalgebra such that
$(\gf,\hf)$ is strictly indecomposable.
Then $(\gf,\hf)$ is real spherical if and only if it is equivalent
to an element of
the following list. 
\begin{itemize}

\newcommand*{\pnlabel}[1]{{\rm (S\refstepcounter{pairnumber}\thepairnumber\label{#1})}}

\item[1)] If $\hf$ is simple, then $(\gf,\hf)$ is either

\begin{itemize}
	
	\item[1a)] of {\it group type}, i.e. $\gf_1=\gf_2$ and 
	\vspace{0,5cm}
	$$\GPV{\qquad \gf_1}{\gf_1 \qquad } {\mf_1 \oplus \af_1}{case4.1} {{\rm a.s.}}$$
	with $\gf_1$ non-compact simple  and with $\mf_1$ and $\af_1$ defined as in Subsection \ref{subsub Lan}.
	
	\item[1b)] or  exactly one $\hf_i$ equals $\gf_i$. In this case, $(\gf,\hf)$ is equivalent 
	to one of the following pairs:  
\vspace{5mm}	

\begin{tabular}{c|c|c} 
	$\GPV{\su(p+1,q+1)} {\su(p,q+1)}{{ \su(q-p) \ \text{{\rm for} 
	$p<q$}}\atop \su(p-q-1) \ \text{{\rm for} $p>q+1$}}{case4.5}
	{p>0, q\ge 0 \atop{p\neq q,q+1}}$&
	$\GPV{\so(n+1,\C)}{\so(n,\C)\quad }{0}{case4.2}{{\rm a.s.}\atop n\geq5}$ 	&
	$\GPV{\so(p+1,q+1)}{\so(p,q+1)}{{\so(q-p) \ \text{{\rm for} $p\leq q$}}\atop \so(p-q-1) \ 
	\text{{\rm for} $p>q$}}{case4.3}{{\rm a.s.}, p>0,  q\ge 0\atop(p,q)\neq(1,0), (1,1), (2,1)}$
	\\  \hline 
\end{tabular}

\begin{tabular}{c|c|c} 
	$\GPV{\so(2,2n)}{\su(1,n)\quad }{\su(n-2)}{case4.61}{n\geq3}$ 	&
	$\GPV{\so^*(2n+2)}{\so^*(2n)}{0}{case4.62}{n\geq 4}$& 
	$\GPV{\sp(p+1,q+1)} {\sp(p,q+1)}{{ \sp(q-p) \ \text{{\rm for} $p\leq q$}}\atop \sp(p-q-1) \ 
	\text{{\rm for} $p>q$}}{case4.4}{p>0, q\ge 0  }$
	\\  \hline 
	$\GPV{\qquad  \sF_4^2}{\so(8,1)\quad}{ \sG_2}{case4.6}{ }$    
\end{tabular}

\vspace{5mm}

\end{itemize}

\renewcommand*{\pnlabel}[1]{{\rm (SS\refstepcounter{pairnumber}\thepairnumber\label{#1})}}

\item[2)] If $\hf$ is semi-simple but not simple, then either

\begin{itemize}
\item[2a)] $\hf_2 = \gf_2$ and $(\gf,\hf)$ is equivalent to one of the 
following pairs, where $\K,\K'=\R$ or $\C$
and $\K'\subset\K$
\vspace{5mm}

\noindent 
\begin{tabular}{c|c|c} 
$\GPNi{\sl(n+2,\K)}{\sl(2,\K)}{\sl(n,\K)}{ \sl(n-2,\K)}{case4.7}{{{\rm a.s.}\atop n\geq 3}}$ 
& $\GPNi{\su(p+1,q+1)}{\su(1,1)}{\su(p,q)}{\su(p-1,q-1)}{case4.9}{{\rm a.s.}\atop 
{p,q\ge 1, p+q\geq  3}}$
& $\GPNiv{\sp(n,\K)}{\sp(1,\K')}{\sp(n-1,\K)}{\sp(n-2,\K)}{\gl(1,\K')}{case4.10}{{\rm a.s.}\atop n\geq 2}$ 
\\
\hline 
$\GPNi{\sp(n,\K)}{\sp(2,\K)}{\sp(n-2,\K)}{\sp(n-4,\K)}{case4.12}{{ \rm a.s.}\atop n\geq {4}}$ & 
$\GPNiv{\sp(p,q)}{\sp(1,1)}{\sp(p-1,q-1)}{\sp(p-2,q-2)}{ \ \  \sp(1)}{case4.14}{{\rm a.s.}\atop 
{p,q\ge 2}}$ 
&

\\ \hline
\end{tabular}

\begin{tabular}{c|c|c} 
$\GPNii{\so(1,n)}{\so(1,p)}{\ef\subset \so(n-p)} { \ef_0}{\so(p-1)}{case4.16}{ n-2>p>1 }$
& $\GPNiv{\su(1,n)}{\su(1,p)}{\ef\subset \su(n-p)} {\qquad \ef_0}{\uf(p-1)}{case4.17}
{{{\rm a.s. if} \,p=1 \,{\rm and}\, \ef=\su(n-1)}\atop n-1> p\geq 1}$

& $\GPNv{\sp(1,n)}{\sp(1,p)}{\sp(n-p)} {\!\!\!\!\!\!\!\!\sp(n-p-1)}
{\!\!\!\!\!\!\!\!\!\!\!\!\!\!\sp(1)\quad \sp(p-1)}{case4.18}{{{\rm a.s. if} p=1}\atop n>p{\ge}1}$
\\ \hline
$\GPANl{\so^*\!(2n+4)}{\sl(2,\R)}{\so^*\!(2n)}{\su(2)}{\so^*\!(2n-4)}{\so(2)}{case4.19}{n\geq 3}$  
& $\GPNisp{\sp(p+1,q+1)}{\sp(p,q+1)}{\sp(1)}{\sp(1)}{{{ \sp(q-p) \ \text{{\rm for} $p\leq q$}}\atop \sp(p-q-1) \ 
\text{{\rm for} $p>q$}} } {case4.20}{p, q>0 }$  \\
\hline
$\GPNn{\sE_6^3}{\sl(2,\R)}{\su(5,1)}{\uf(2)}{\hspace{2mm} \su(2)}{case4.22}$ &	
$\GPNn{\sE_7^3}{\sl(2,\R)}{\so(10,2)}{\so(6)}{\so(2,1)}{case4.21}$ 

\end{tabular}
	
\vspace{5mm}	
For the various possibilities 
of $\ef$ and $\ef_0$ in {\rm (SS\ref{case4.16})}
and {\rm (SS\ref{case4.17})}
see Table \ref{e-table}.

	\item[2b)] or $\hf_i \neq \gf_i$ for $i = 1,2$ and $(\gf,\hf)$ is equivalent to one of the pairs where
	$\K,\K',\K''=\R,\C$ and  $\K''\subset\K\subset \K'$
\vspace{5mm}

\noindent \begin{tabular}{c|c} 
  $\repMii{\sl(n+2,\K)}{\sp(m+1,\K')}{\sl(n,\K)}{\sl(2,\K)}{\sp(m,\K')}{\sl(n-2,\K)}{\sp(m-1,\K')}{case4.23} { {\rm a.s. 
	if}\ \K=\K' \atop m\geq 1, n\geq 3} $ 
&  $\repMii{\su(p+1,q+1)}{\sp(m+1,\K)}{\su(p,q)}{\su(1,1)}{\sp(m,\K)}{\su(p-1,q-1)}{\sp(m-1,\K)}{case4.26}
{{ {\rm a.s. if}\  \K=\R}\atop { m\geq 1, p,q\ge  1 \atop p+q\geq 3}}$\\
\hline 

\end{tabular} 

\begin{tabular}{c|c}
$\repMivi{\sp(n+1,\K)}{\sp(m+1,\K')}{\sp(n,\K)}{\sp(1,\K'')}{\sp(m,\K')}{\sp(n-1,\K)}{\gl(1,\K'')}{\sp(m-1,\K')}
{ {\rm a.s. 
	if}\  \K''=\K'\atop m,n\geq 1 }
{case4.27}$ 
& $\repMi{\ \ \sp(2,\C)}{\sp(n+1,\C)}{\sp(1,\R)}{\sp(1,\C)}{\sp(n,\C)}{\sp(n-1,\C)}{case4.28}{n\geq1 } $
\\ \hline
$\repMii{\sp(n+1,\K)}{\so(1,m)}{\sp(n,\K)}{\so(1,2)}{\ef\subset \so(m-2)}{\sp(n-1,\K)}{\qquad \ef_0}{case4.35}{n\geq1, m\geq 3 } $
&$\repMii{\sp(n+1,\C)}{\so(1,m)}{\sp(n,\C)}{\so(1,3)}{\ef\subset \so(m-3)}{\sp(n-1,\C)}{\qquad \ef_0}{case4.37}{n\geq 1, m\geq 4 }$\\ \hline 
$\repMii{\sp(n+1,\K)}{\su(1,m)}{\sp(n,\K)}{\su(1,1)}{\ef\subset \su(m-1)}{\sp(n-1,\K)}{\qquad \ef_0}{case4.36}{n\geq1, m\geq  3} $&$\repMAiii{\sp(n+1,\K)}{\so^*(2m + 4)}{\sp(n,\K)}{\sp(1,\R)}{\so^*(2m)}{\su(2)}{\sp(n-1,\K)}{\so^*(2m-4)}{\so(2)}{case4.31}{m\geq3, n\geq1}$
 \\ 
 \hline 



$\repMin{\sp(n+1,\K)}{\sE_6^3}{\sp(n,\K)}{\sl(2,\R)}{\su(5,1)}{\sp(n-1,\K)}{\uf(2)}{\su(2)}{n\geq 1}{case4.33}$&
 $\repMin{\sp(n+1,\K)}{\sE_7^3}{\sp(n,\K)}{\sl(2,\R)}{\su(10,2)}{\sp(n-1,\K)}{\so(6)}{\so(2,1)}{n\geq 1}{case4.38}$
	\end{tabular}
\vspace{5mm}
\end{itemize}

\renewcommand*{\pnlabel}[1]{{\rm (R\refstepcounter{pairnumber}\thepairnumber\label{#1})}}

\item[3)] If $\hf=[\hf,\hf]+\zf$ is not semi-simple, then there are two classes: 
\begin{itemize}
\item[3a)]  $(\gf,[\hf,\hf])$ 
is already real spherical, hence belongs to {\rm 1)} or 
{\rm 2)}. 
Then 
$(\gf,\hf)$ appears in the following list, 
where $\K\subset\K'$: 

\

\noindent
\begin{tabular}{c|c} 
$\GPNisp{\su(p+1,q+1)}{\su(p,q+1)}{\uf(1)}{\uf(1)}
{{{ \su(q-p) \ \text{{\rm for} $p< q$}}\atop \su(p-q-1) 
\ \text{{\rm for} $p>q+1$}} } {case4.48}
{{\rm a.s. }, p>0,q\ge 0 \atop{p\neq q,q+1}}
\mkern40mu$  
&
$\GPNisp{\sp(p+1,q+1)} {\sp(p,q+1)}{\uf(1)}{\uf(1)}
{{{ \sp(q-p) \ \text{{\rm for} $p\le q$}}\atop \sp(p-q-1) 
\ \text{{\rm for} $p> q$}}}
{case4.49}
	{p>0,q\ge 0}\mkern40mu$
\\ \hline

$\GPNi{\sl(n+2,\K)}{\sl(2,\K)}{\ff+\sl(n,\K)}{\ff+\sl(n-2,\K)}{case4.50}
{n\ge 3, 0\neq\ff\subset\gl(1,\K)\atop
{\rm a.s. if }\, \ff=\gl(1,\K)}$ 
&
$\GPNi{\su(p+1,q+1)}{\su(1,1)}{\uf(p,q)}{ \uf(p-1,q-1)}
{case4.51}
{{{\rm a.s.}\atop p+q\geq 3, p,q\ge1}}$
\\ \hline
$\GPNiv{\su(1,n)}{\su(1,p)}
{\mathfrak{i}\subset \uf(n-p)} 
{\qquad\mathfrak{i}_0}
{\uf(p-1)}{case4.52}{{\rm a.s. if}\  \mathfrak{i}=\uf(n\!-\!p)\ {\rm and}\
p=1,n\!-\!1 \atop n-1\ge p\geq 1}$
&
$\GPNii{\so(1,n)}{\so(1,p)}{{\mathfrak i} \subset \so(n-p)} 
{ \mathfrak{i}_0}{\so(p-1)}{case4.53a}{ n-2>p>1 }$

\end{tabular}

\begin{tabular}{c|c} 
	
	$\repMii{\sl(n+2,\K)}{\sp(m+1,\K')}{\mkern-20mu\ff+\sl(n,\K)}
	{\sl(2,\K)}{\sp(m,\K')}
	{\mkern-10mu\ff+\sl(n-2,\K)}
	{\sp(m-1,\K')}{case4.53}
	{ m\ge 1, n\ge 3, 0\neq\ff\subset\gl(1,\K)\atop
{\rm a.s. if }\, \K=\K'\, {\rm and }\,\ff=\gl(1,\K)}$ 
&
$\repMii{\su(p+1,q+1)}{\sp(m+1,\K)}{\uf(p,q)}{\su(1,1)}{\sp(m,\K)}
{\uf(p-1,q-1)} {\sp(m-1,\K)}{case4.54}
{ m\geq 1, p,q\ge 1\atop p+q\geq 3}$

\\ \hline

$\GPNl{\so^*(2n+2)}{\so^*(2n)}{\so^*(2)=\uf(1)}{0}{case4.45}
{ n\geq4}$ 
&$\GPNisp{\so(2,2n)}{\su(1,n)}{\uf(1)}
{\uf(1)\qquad\qquad\su(n-2)}{}{case4.46}{n\geq3 } $

\\ \hline


$\repMii{\sp(n+1,\K)}{\so(1,m)}{\sp(n,\K)}{\so(1,2)}{\mathfrak{i}\subset \so(m-{2})}
{\sp(n-1,\K)}{\qquad \mathfrak{i}_0}{case4.55a}{n\geq1, m\geq 3} $
&

$ \repMii{\sp(n+1,\C)}{\so(1,m)}{\sp(n,\C)}{\so(1,3)}{\mathfrak{i}\subset \so(m-{3})}
{\sp(n-1,\C)}{\qquad \mathfrak{i}_0}{case4.55b}{n\geq1, m\geq 4} $

\\ \hline

$\repMii{\sp(n+1,\K)}{\su(1,m)}{\sp(n,\K)}{\su(1,1)}{\mathfrak{i}\subset \uf(m-1)}
{\sp(n-1,\K)}{\qquad \mathfrak{i}_0}{case4.55}{n\geq1, m\geq 3} $ 
&
\end{tabular}

For the various possibilities for $({\mathfrak i}, {\mathfrak i}_0)$ see Table \ref{i-table}.

\item[3b)] $(\gf,[\hf,\hf])$ is not real spherical. 
Then 
$(\gf,\hf)$ appears in the following list 
where $\K\subset \K'$: 

\

\noindent
\begin{tabular}{c|c|c} 
$\GPNl{\sl(n+2,\K)}{\sl(2,\K)}{\mkern-1mu\gl(n,\K)}
{0}{case4.56}{{{\rm a.s., } \atop n=1,2}}$ 
&
$\GPNl{\su(2,2)}{\su(1,1)}{\uf(1,1)}{0}{case4.57}
{{{\rm a.s.}} }$
&
$\GPNl{\su(p+1,q+1)}{\su(p,q+1)}{\mathfrak{u}(1)}{0}{case4.42}{{\rm a.s.}, p>0, q\ge 0\atop  p=q,q+1}$ 
\\ \hline
$\GPNl{\sl(n+1,\K)}{\sl(n,\K)}{\gl(1,\K)}{0}{case4.40}{{\rm a.s.}\atop n\geq 3}$
&

$\GPNi{\sl(n+1,\Hb)}{\ \sl(n,\Hb)}{\quad \R+\ff}{\hspace{18pt}\ff}{case4.43}{n\geq 2 \atop \ff \subset  \sl(1,\Hb)}$
& 
$\GPNll{\so(1,n)}{\so(1,n-2)}{\so(2)}{\so(n-3)}{case4.59}{n\geq4 } $ 
\\ \hline 
\end{tabular}

\noindent
\begin{tabular}{c|c} 
  $\repMi{\sl(n+2,\K)}{\sp(m+1,\K')}
	{\gl(n,\K)}{\sl(2,\K)}{\sp(m,\K')}
	{\sp(m-1,\K')}{case4.58} 
	{{\rm a.s. if}\, \K=\K'\atop{m\geq 1, n=1,2} }$ 
	&
$\repMi{\su(p+1,2)}{\sp(n+1,\K)}{\uf(p,1)}{\su(1,1)}{\sp(n,\K)}
{\sp(n-1,\K)}{case4.60}{{\rm a.s. if}\, \K=\R
\atop n\geq1, p=0,1} $
\\ \hline

$\GPNll{\ \ \ \sE_6^4}{\ \so(9,1)}{\gl(1,\R)}{ \quad\sG_2}{case4.47}{ }$ 
&
\end{tabular}

\vspace{5mm}

\end{itemize}
\end{itemize}
\end{theorem}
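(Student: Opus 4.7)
The plan is to apply Lemma \ref{lemma stand1} and the reduction carried out in Subsection \ref{dec L_i}. Since $(\gf,\hf)$ is assumed to be real spherical we know that each pair $(\gf_i,\hf_i)$ is real spherical with $\gf_i$ simple, so by Part I the list of admissible pairs $(\gf_i,\hf_i)$ is already known, and together with it the structural algebras $\str(\gf_i,\hf_i)$ compiled in the tables of Appendix \ref{appendix}. The normal subalgebras $\hf_i'\triangleleft \hf_i$ and the quotient $\hf^0\simeq \hf_i/\hf_i'$ from Subsection \ref{dec L_i} give us the decomposition (\ref{decomp_h}) and the induced structural subalgebras $\sf_i''\subset \hf^0$ arising from the projection of $\str(\gf,\hf_i)$ to $\hf^0$. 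The strict indecomposability of $(\gf,\hf)$ guarantees that $\hf^0$ is a non-elementary reductive Lie algebra.

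By Lemma \ref{lemma stand1}(\ref{standard4}), reformulated through (\ref{H^0-factor}), the pair $(\gf,\hf)$ is real spherical if and only if $\hf^0=\sf_1''+\sf_2''$ and, once this factorization holds, $(\sf_1''\times\sf_2'', \sf_1''\cap\sf_2'')$ is real spherical. Consequently I would first run through every legal combination of a pair $(\gf_1,\hf_1)$ with a pair $(\gf_2,\hf_2)$ from Part I; for each combination, the possible $\hf^0$ sits as a quotient of both $\hf_1$ and $\hf_2$, and it must admit a factorization $\hf^0=\sf_1''+\sf_2''$. Using Proposition \ref{Oni2} (Onishchik) this restricts the simple factors of $\hf^0$ to a short list, and in each case one then checks the additional real spherical condition on $(\sf_1''\times\sf_2'',\sf_1''\cap\sf_2'')$ by the same method (often reducing to a known symmetric or Brion--Mikityuk pair, or to one from Part I). When the factorization is trivial ($\hf^0=\sf_1''=\sf_2''$), the case $\hf^0=\hf_i$ for some $i$ corresponds to $\hf_i=\gf_i$, giving the Gross--Prasad--type families in 1b) and 2a); the genuine factorizations produce the cases in 2b).

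Once the classification with $\hf$ semisimple is complete I would treat 3) by inspection: write $\hf=[\hf,\hf]\oplus\zf$ and observe that replacing $\hf$ by $[\hf,\hf]$ can only remove central summands of $\str(\gf,\hf)$, so the factorization analysis above carries over and determines admissible central extensions in 3a); pairs arising in 3b) come from those pairs where $[\hf,\hf]\subset\gf$ is already too small to be real spherical and one needs an extra torus factor to achieve $\hf+\pf=\gf$ (this is the obstruction that creates lines like $(\sl(n+1,\K),\gl(n,\K))$). In parallel with the factorization I record at each step the resulting $\str(\gf,\hf)$ via formula (\ref{formula L}), together with an application of the descent identity (\ref{L-tower}) to relate the structural algebra of the full pair to the structural algebras of the factors $S_i''\cap S_j''$; this is needed in order to feed the $k=2$ classification into the subsequent $k=3$ analysis.

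The principal obstacle will be the non--absolutely--spherical exotic pairs, in particular the two $\sE_6^3$ and $\sE_7^3$ cases (SS\ref{case4.22}, SS\ref{case4.21}) and their $\sp$-extensions (R\ref{case4.33}, R\ref{case4.38}). For these the middle algebra $\hf^0$ has the special form $\sl(2,\R)$ and the required factorization of $\su(5,1)$, respectively $\so(10,2)$, is not absolutely spherical and hence does not come from Brion--Mikityuk. Verifying the open orbit condition for $P_{H'}$ acting on $H'/H$ requires an explicit computation in the restricted root space structure of the corresponding symmetric space of $\sE_7$; for this I would invoke the general result on restricted root spaces proved in Appendix \ref{AppB}. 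Throughout, the necessary dimension bound (\ref{dim bd}) is used to cut down the combinatorial search, and Lemma \ref{lemma stand2}(\ref{compact factor}) is used repeatedly to discard elementary direct summands of $\hf$ that do not affect sphericality.
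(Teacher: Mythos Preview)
Your overall strategy matches the paper's: reduce via the tower lemmas and the $H^0$-factorization of Subsection~\ref{dec L_i}, use Onishchik's list (Proposition~\ref{Oni2}) to constrain the possible $\hf^0$, and feed in the structural algebras from the tables of Appendix~\ref{appendix}. The paper organizes the argument slightly differently---it isolates the notion of \emph{strongly real spherical} pair and proves an inductive descent (Corollary~\ref{grossprasad_cond}) which reduces case 1b) to a finite check, and then handles 2a) by reducing to the question of whether $(\hf_2,\sf_2)$ is strongly spherical---but this is a repackaging of the same mechanism you describe.

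One point in your sketch is garbled and would cause trouble if taken literally. In the exotic cases (SS\ref{case4.22}) and (SS\ref{case4.21}) the issue is \emph{not} a factorization of $\su(5,1)$ or $\so(10,2)$. Rather, for $\gf=\sE_6^3$ (resp.\ $\sE_7^3$) with $\hf=\su(5,1)\oplus\sl(2,\R)$ (resp.\ $\so(10,2)\oplus\sl(2,\R)$), the structural algebra $\str(\gf,\hf)=\uf(2)\oplus\uf(2)$ (resp.\ $\so(6)\oplus\so(2)\oplus\so(2,1)$) is known from the tables, but what is \emph{not} recorded there is how this subalgebra sits inside $\hf$: specifically, whether its projection $\sf_2$ to the $\sl(2,\R)$ factor is nonzero. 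This is exactly what must be shown, since $(\gf\oplus\sl(2,\R),\hf)$ is spherical iff $(\sl(2,\R),\sf_2)$ is strongly spherical, i.e.\ iff $\sf_2\neq 0$. The paper settles the $\sE_6^3$ case by a short argument using the Hermitian structure (the center of $\kf$ cannot lie in $\hf_1$), and the $\sE_7^3$ case by a multiplicity-parity argument on the $\sf$-module $\gf_\C/\hf_\C$, for which the prehomogeneity of restricted root spaces from Appendix~\ref{AppB} is the key input. Your instinct to invoke Appendix~\ref{AppB} here is correct, but the target of the computation is the embedding of $\sf$ in $\hf$, not a factorization. (Also, the $\sp$-extensions you mention are labeled (SS\ref{case4.33}) and (SS\ref{case4.38}), not with the R-prefix.)
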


This theorem will be proven in various subsequent lemmas.

\subsection{The case where $\hf$ is simple}

If $\hf$ is simple, we have $\hf=\hf^0$ with respect to \eqref{decomp_h}. 
We can assume that $(\gf,\hf)$ is not of group type.  The building blocks for this situation 
will be when $\hf$ is isomorphic 
to one factor, say $\hf\simeq \gf_2$.  
For the treatment of this case we use simplified notation:  Let $\gf$ be a reductive
Lie algebra and $\hf\subset \gf$  a  reductive subalgebra.  Recall that we say that 
$(\gf,\hf)$ is 
{\it strongly real spherical}  if the pair $ (\gf\oplus \hf,\diag(\hf))$ is real spherical.  In the first step 
we classify all strongly real spherical pairs with $\gf$ and $\hf$ both non-compact simple. 

We let $Q=LU$ be the parabolic subgroup 
adapted to $G/H$, as in the local structure theorem,
and put $L_H=L \cap H$. 
An immediate consequence of Corollary \ref{tower-corollary} then is: 

\begin{cor} \label{grossprasad_cond}  Let $(\gf,\hf)$ be a real spherical pair with structural algebra $ \sf=\str(\gf,\hf)$,
where $\gf$ is reductive and $\hf$ reductive in $\gf$.
Then the following conditions are equivalent: 
\begin{enumerate}
\item $(\gf,\hf)$ is strongly real spherical. 
\item $(\hf, \sf)$ is strongly real spherical. 
\end{enumerate} 
Moreover if these conditions hold, then 
\begin{equation} \label{L descent} 
\str(\gf\oplus\hf,\hf)=\str(\hf\oplus \sf, \sf)
\, .\end{equation}
\end{cor}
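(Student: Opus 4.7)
The plan is to apply Corollary~\ref{tower-corollary} to the trivial ideal decomposition $\hf = \{0\} \oplus \hf$, i.e.\ with $\hf_1 = \{0\}$ and $\hf_2 = \hf$. Under this choice, the inclusion $\hf \hookrightarrow \gf \oplus \hf_2$, $x \mapsto (x, x_2)$, of Corollary~\ref{tower-corollary} degenerates to the diagonal embedding $x \mapsto (x, x)$. Consequently, the pair $(\gf \oplus \hf_2, \hf)$ appearing in condition~\eqref{rs1} of that corollary becomes exactly $(\gf \oplus \hf, \diag(\hf))$, whose real sphericality is by definition the statement that $(\gf, \hf)$ is strongly real spherical.

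Next I would identify the auxiliary subalgebra $\sf_2$. Since $\str(\gf, \hf) \subset \hf$ by definition, its projection onto $\hf_2 = \hf$ is all of $\sf = \str(\gf, \hf)$. The pair $(\hf_2 \oplus \sf_2, \sf_2) = (\hf \oplus \sf, \diag(\sf))$ appearing in condition~\eqref{rs2} of Corollary~\ref{tower-corollary} is therefore precisely the pair whose real sphericality expresses that $(\hf, \sf)$ is strongly real spherical. Hence the equivalence (1) $\Leftrightarrow$ (2) is immediate from Corollary~\ref{tower-corollary}, noting that the standing hypothesis that $(\gf, \hf)$ is real spherical supplies the input required to apply it.

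For the structural algebra identity~\eqref{L descent}, I would invoke the isomorphism~\eqref{l descent} of Corollary~\ref{tower-corollary}, which in the present setting reads
\begin{equation*}
\str(\gf \oplus \hf, \hf) \simeq \bigl(\str(\gf, \hf) \cap \{0\}\bigr) \oplus \str(\hf \oplus \sf, \sf) = \str(\hf \oplus \sf, \sf).
\end{equation*}
I do not anticipate any genuine obstacle: the argument is a direct reduction, the only point requiring care being the recognition that the two notions of strong sphericality in the statement are precisely the instances of sphericality appearing in Corollary~\ref{tower-corollary} under the trivial ideal decomposition $\hf_1 = 0$, $\hf_2 = \hf$.
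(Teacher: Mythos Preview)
Your proof is correct and follows exactly the same approach as the paper's own proof, which simply reads ``Let $\hf_1=\{0\}$ and $\hf_2=\hf$ in Corollary~\ref{tower-corollary}.'' You have merely spelled out the identifications in more detail.
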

 
\begin{proof} Let $ \hf_1=\{0\}$ and $ \hf_2=\hf$ 
in Corollary \ref{tower-corollary}.
\end{proof} 

Note that in particular, if $(G,H)$ is 
strongly real spherical, we have that $L_H$ is a spherical subgroup of $H$.

\begin{lemma} \label{strong spherical}
Let $\gf$ be a simple Lie algebra and $\hf\subsetneq \gf$ a simple  non-compact real spherical 
subalgebra.  Let $\sf=\sf(\gf,\hf)$ denote the
associated structural algebra.

\begin{itemize}

\item[(a)]  The pair $(\hf,\sf)$ is real spherical
if and only if $(\gf,\hf)$ belongs (up to equivalence)
to the following list of triples $(\gf,\hf,\sf)$:
\begin{enumerate} 
	\item\label{i1} $(\sl(n+2,\K), \sl(n+1,\K), \sl(n,\K))$  for $\K=\R,\C,\Hb$ and $n\geq 1$
	\item \label{i2}$(\su(p+1, q+1), \su(p,q+1),\su(p,q))$ for $p+q\geq 1$ 
        \item \label{i4}$(\so(n+1, \C), \so (n,\C), \so(n-1,\C))$  for $n\geq 5$
	\item \label{i5}$(\so(p+1, q+1), \so (p,q+1),\so (p, q))$ for $p+q\geq 2$ and $(p,q)\neq (1,1), (2,1)$
	\item \label{i6} $(\so^*(2n+4), \so^*(2n+2), \so^*(2n))$  for $n\geq 3$
	\item \label{i7} $(\so(2,2n), \su(1,n), \su(n-1)\oplus \su(1,1))$  for $n\geq 3$
	\item \label{i8} $(\so(7,\C), \sG_2^\C, \sl(3,\C))$
	\item \label{i9}$(\so(3,4), \sG_2^1, \sl(3,\R))$ 
	\item \label{i10}$(\sp(p+1,q+1), \sp(p,q+1),\sp(p,q))$ for $p>0$ 
        \item \label{i11}$(\sF_4^\C, \so(9,\C), \spin(7,\C))$
	\item \label{i12} $(\sF_4^1, \so(4,5), \spin(3,4))$
	\item \label{i13} $(\sF_4^2, \so(1,8), \spin(7))$
	\item \label{i14} $(\sG_2^\C, \sl(3,\C), \sl(2,\C))$ 
	\item \label{i15} $(\sG_2^1, \sl(3,\R), \sl(2,\R))$
	\item \label{i16} $(\sG_2^1, \su(2,1), \sl(2,\R))$ 
\end{enumerate}

\item[(b)] The strongly real  spherical pairs $(\gf, \hf)$  are:  

\qquad {\rm (\ref{i2})} for $p\notin\{q,q+1\}$,
{\rm (\ref{i4})}, {\rm (\ref{i5})}, 
{\rm (\ref{i6})}, {\rm (\ref{i7})}, {\rm(\ref{i10})} and {\rm(\ref{i13})}.
\par\noindent
Moreover, the structural algebras $\str(\gf\oplus\hf,\hf)$ are given by 
$$\begin{tabular}{c l}
$(\gf,\hf)$  & $\str(\gf\oplus\hf,\hf)$ \\ \hline
{\rm(\ref{i2})} &  $\begin{cases}  \su(q-p) & \text{for $p<q$}  \\\su(p-q-1) & \text{for $p>q+1$} \end{cases}$\\
{\rm(\ref{i4})} & $ \{0\}$\\ 
{\rm(\ref{i5})} &  $\begin{cases}  \so(q-p) & \text{for $p\leq q$}  \\\so(p-q-1) & \text{for $p>q$} \end{cases}$\\
{\rm(\ref{i6})} & $\{0\}$\\
{\rm(\ref{i7})} &  $\su(n-2)$\\
{\rm(\ref{i10})} &  $\begin{cases}  \sp(q-p) & \text{for $p\leq q$}  \\\sp(p-q-1) & \text{for $p>q$} \end{cases}$\\
{\rm (\ref{i13})} & $\sG_2$\\
\end{tabular}$$

\end{itemize}
\end{lemma}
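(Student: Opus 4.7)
The plan is to go through the classification from Part~I, which lists all real spherical pairs $(\gf,\hf)$ with $\gf$ simple, and for each pair on that list where $\hf$ is simple non-compact, to read off the structural algebra $\sf=\str(\gf,\hf)$ from Tables \ref{lcaph_class_symm}--\ref{lcaph_KKPS_class} in the appendix. The problem then reduces to asking whether $\sf$ is a real spherical subalgebra of $\hf$, which we can check by inspecting the same Part~I list, now applied to $\hf$ (which is simple). This will eliminate most candidates and leave the sixteen triples enumerated in (a). The bookkeeping is routine but voluminous; the only genuine subtlety is that $\sf$ need not be simple (even if $\hf$ is), so one has to compare $\sf$ against the full classification of reductive real spherical subalgebras of simple Lie algebras, using Lemma \ref{lemma stand1} to reduce the semisimple part of $\sf$ to simple ideals when testing sphericality.

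For part (b), the plan is to invoke Corollary \ref{grossprasad_cond}: $(\gf,\hf)$ is strongly real spherical if and only if $(\hf,\sf)$ is strongly real spherical, and in that case the structural algebras agree, $\str(\gf\oplus\hf,\hf)=\str(\hf\oplus\sf,\sf)$. Iterating this equivalence, one builds a descending chain
\begin{equation*}
\gf \supset \hf \supset \sf \supset \sf^{(2)} \supset \cdots
\end{equation*}
where $\sf^{(i+1)}=\str(\sf^{(i-1)},\sf^{(i)})$, and $(\gf,\hf)$ is strongly spherical if and only if every consecutive pair in this chain is real spherical. The iteration terminates when the chain reaches a trivial situation (either an empty factor, a compact factor which can be absorbed via Lemma \ref{lemma stand2}(\ref{compact factor}), or an abelian factor, in all of which cases the spherical condition becomes automatic). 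Running this test on the sixteen cases from (a) is then a finite check: one iterates once, sees whether the new pair $(\hf,\sf)$ still appears in the Part~I list, and if so whether that iteration stabilizes.

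The cases where the chain stabilizes are precisely those of "Gross--Prasad type", where the classification is self-similar along successive structural algebras. For instance, the pair (\ref{i5}) gives the chain $\so(p+1,q+1)\supset\so(p,q+1)\supset\so(p,q)\supset\cdots$, which is manifestly self-similar and eventually terminates in a compact orthogonal factor; similarly for (\ref{i2}), (\ref{i6}), (\ref{i7}), (\ref{i10}), and for (\ref{i13}) via the known symmetric tower $\sF_4^2\supset\so(1,8)\supset\spin(7)$. The pair (\ref{i4}) is included since the chain $\so(n+1,\C)\supset\so(n,\C)\supset\cdots$ descends through complex orthogonal groups and terminates on a split real form whose structural algebra is zero. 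In each such case the terminal structural algebra in the chain, read off the final nontrivial step, produces the subalgebra tabulated in the second column of the table in (b). The cases from (a) that fail to be strongly spherical do so because one iteration produces a pair $(\hf,\sf)$ that is no longer in the Part~I list.

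The main obstacle is the bookkeeping in part (a): for each candidate one must correctly identify the full structural algebra (including non-simple summands and central parts), and then verify or exclude sphericality of $(\hf,\sf)$ by checking against the full list from Part~I, occasionally using the Onishchik factorization (Proposition \ref{Oni2}) to handle small-rank coincidences such as the low-dimensional $\so$ cases and the $\sG_2$/$\spin(7)$ exceptional embeddings (e.g.\ cases (\ref{i8}), (\ref{i9}), (\ref{i11})--(\ref{i13})). Once the list in (a) is established, part (b) requires no further classification work beyond verifying that the iteration stabilizes, which is immediate from the self-similar form of the triples that survive.
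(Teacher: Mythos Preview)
Your approach is essentially the same as the paper's: read off part (a) from the tables in the appendix, and for part (b) iterate Corollary \ref{grossprasad_cond} down a descending chain until termination. A few minor differences worth noting: the paper shortcuts several of the failing cases by invoking the dimension bound (\ref{dim bd}) directly (for (\ref{i9}), (\ref{i15}), (\ref{i16})) and by citing Brion--Mikityuk for the complex cases (\ref{i8}), (\ref{i11}), (\ref{i14}), rather than running the full iteration; your reference to Onishchik's factorization is misplaced, as Proposition \ref{Oni2} plays no role in this particular lemma; and the terminal step for (\ref{i13}) requires the specific fact $\Spin(7)\cap\SO(7)=\sG_2$ to identify the structural algebra, which you should flag explicitly rather than fold into ``known symmetric tower''.
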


\begin{proof}
Part (a) can be easily obtained 
from Tables \ref{lcaph_class_symm} - \ref{lcaph_KKPS_class}. We proceed with (b) and
let $(\gf,\hf)$ be a pair such that $(\gf,\hf, \sf)$ satisfies (a). 

In the cases where $(\gf,\hf)$ is complex, we can refer to \cite{Brion}, \cite{Mik}. 
Hence, it follows that (\ref{i4}) is strongly real spherical, 
while  (\ref{i8}), (\ref{i11}) and  (\ref{i14}) are not. 
Further (\ref{i12}) is a split real form of (\ref{i11}) and hence is not strongly 
real  spherical. However, (\ref{i5}) is a real form of (\ref{i4}), hence it is 
also strongly real  spherical  by Lemma~\ref{realform_of_complexspherical}.  

\par There is an alternative to verify that $(\ref{i4})$ is strongly spherical,
which has the advantage that it also yields the structural algebra $\str(\gf\oplus\hf,\hf)$.  
Indeed by Corollary \ref{grossprasad_cond},
$(\so(n+1,\C),\so(n,\C))$ is strongly real spherical if and only if $(\so(n,\C),\so(n-1,\C))$ 
is strongly real spherical, and the structural algebras for these two pairs are isomorphic.  
Hence iteration brings us down to the case $(\gf,\hf)=(\so(4,\C),\so(3,\C))$.
Because of the isomorphism $\so(4,\C)=\so(3,\C)\oplus\so(3,\C)$ this pair is 
known to be strongly real spherical with trivial structural algebra. 

In a similar way we can treat (\ref{i5}). If $p=q$, then both $\gf$ and $\hf$ 
are quasisplit, and $\str(\gf\oplus\hf,\hf)=\{0\}$ by what we have just shown. 
Suppose that $p<q$. Iteration brings us down to 
$(\so(1, q+1-p), \so(q-p+1))$ which is strongly real spherical by Lemma \ref{lemma stand2}(\ref{compact factor}).  
Moreover, $\str(\gf\oplus\hf,\hf)=
\so(q-p)$.  If $p>q$, then iteration leaves us with $(\so(p-q,1), \so(p-q))$ which has structural algebra
$\str(\gf\oplus\hf,\hf)=\so(p-q-1)$. 

We proceed analogously in (\ref{i2})  and (\ref{i10}).
For $p\leq q$ the sequence for $(\gf,\hf) = (\su(p+1,q+1),\su(p,q+1))$ 
ends up at $(\su(1,q-p+1),\su(q-p+1))$  which is strongly real spherical if
and only if $p<q$  by Lemma \ref{lemma stand2}(\ref{compact factor}). 
Moreover, if $p<q$ one has $\str(\gf\oplus\hf,\hf)=\su(q-p)$.  
Similarly, for $p>q$, the sequence 
terminates at $(\su(1,p-q),\su(p-q))$ which is strongly real spherical 
if and only if $p>q+1$ and then has $\str(\gf\oplus\hf,\hf)=
\su(p-q-1)$. The sequence for (\ref{i10}) terminates at $(\gf,\hf)=(\sp(1,q-p+1), \sp(q-p+1))$ 
when $p\leq q$, and at
$(\gf,\hf)= (\sp(1,p-q), \sp(p-q))$ when $p>q$. Both are strongly real  spherical 
by Lemma \ref{lemma stand2}(\ref{compact factor}). 
Moreover we obtain for $\str(\gf\oplus\hf,\hf)$   that it is equal to 
$\sp(q-p)$ for $p\leq q$ and $\sp(p-q-1)$ if $p>q$.

The case (\ref{i1}) can also be determined 
by iteration. Here we end with $(\gf,\hf)=(\sl(2,\K), \sl(1,\K))$ 
which cannot be 
strongly real  spherical as it is not even real spherical (see  Part I). 
Hence (\ref{i1}) is not real spherical.

\par We turn to (\ref{i7}).   It follows from Corollary 
\ref{tower-corollary} that $(\so(2,2n), \su(1,n))$ is 
strongly real spherical if and only if $(\su(1,n), \su(1,1)\oplus \su(n-1))$ is strongly real spherical. 
Applying Corollary \ref{tower-corollary} again we obtain that 
this is the case if and only if 
$(\su(1,1)\oplus\su(n-1), \diag (\uf(1)) \oplus \su(n-2))$ is strongly spherical.  Since 
$\su(n-1)$ and $\uf(1)$ are both compact the latter is equivalent to $(\su(1,1), \uf(1))$ being real spherical, which is true.
Further we obtain from Corollary \ref{tower-corollary} that 
\begin{eqnarray*}\str (\so(2,2n)\oplus \su(1,n), \su(1,n)) &=&  \str(\su(1,1)\oplus\su(n-1) \oplus \uf(n-2) , \uf(n-2))\\ 
&=&\su(n-2) \oplus \str(\su(1,1)\oplus \uf(1),\uf(1))=\su(n-2)\, .\end{eqnarray*}

The case (\ref{i6}) is again determined by induction. The sequence
terminates at $(\so^*(8),\so^*(6))\simeq (\so(2,6), \su(1,3))$
which is strongly real  spherical
with structural algebra $\{0\}$ by case (\ref{i7}).
 
\par It follows from Corollary \ref{grossprasad_cond} and the dimension bound  (\ref{dim bd}), that 
the cases (\ref{i9}), (\ref{i15}) and (\ref{i16}) are not strongly real spherical.

\par Finally (\ref{i13}) is strongly real  spherical  if and only if 
$(\so(8,1),\spin(7))$ is strongly real spherical, and by Lemma
\ref{lemma stand2}(\ref{compact factor})
if and only if that pair is real spherical. 
This is actually the case, by the classification in  Part I. 
Moreover, since $\Spin(7)\cap \SO(7)=\sG_2$ 
 (see \cite[p. 170]{VSV})
it follows that 
$\str(\gf\oplus\hf,\hf)=\sG_2$.
\end{proof}

Let us now discuss the general case with $\hf$ simple.

\begin{prop} \label{prop-simple}For $k=2$, a
strictly indecomposable real spherical pair $(\gf,\hf)$ with $\hf$ 
simple arises from a strongly real spherical pair, i.e.  $(\gf,\hf)=(\gf'\oplus\hf',\hf')$ with 
$(\gf',\hf')$ strongly spherical.  The cases are listed in  Theorem~\ref{thm:semisimplespherical} (1).
\end{prop}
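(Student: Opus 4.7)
The plan is to show that at least one of the projections $\hf_i$ equals $\gf_i$, from which the conclusion follows by the very definition of strong sphericality; I would rule out the \emph{mixed} case $\hf_i\subsetneq\gf_i$ for both $i$ by combining Lemma~\ref{lemma stand2}(\ref{reduction condition}) with the factorization condition of Lemma~\ref{lemma stand1}(\ref{standard4}) and Onishchik's Proposition~\ref{Oni2}. First I would observe that since $\hf$ is simple, every ideal $\hf\cap\gf_i$ of $\hf$ is either $0$ or all of $\hf$; indecomposability (which coincides with strict indecomposability here, as $\hf_{\rm n}=\hf$ when $\hf$ is non-compact, and a compact simple $\hf$ would have $\hf_{\rm n}=0$ and thus give a decomposable pair) together with $\gf_1,\gf_2\neq 0$ forces $\hf\cap\gf_i=0$ for both $i$. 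Hence the projections $p_i\colon\hf\to\hf_i$ are isomorphisms and $\hf$ sits in $\hf_1\oplus\hf_2$ as the graph of $\alpha:=\phi_2\circ\phi_1^{-1}\colon\hf_1\to\hf_2$. Assuming $\hf_i\subsetneq\gf_i$ for both $i$, Lemma~\ref{lemma stand2}(\ref{reduction condition}) applied to $(\gf,\hf)$ yields that $(\gf_1\oplus\hf_2,\hf)$ is real spherical; identifying the second factor with $\hf_1$ via $\alpha^{-1}$, this is exactly the real sphericality of $(\gf_1\oplus\hf_1,\diag\hf_1)$, i.e.\ the strong real sphericality of $(\gf_1,\hf_1)$. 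Symmetrically, $(\gf_2,\hf_2)$ is strongly real spherical. Hence both pairs appear in Lemma~\ref{strong spherical}(b), and the structural algebras $\sf_i:=\str(\gf_i,\hf_i)$ are the ones recorded in Lemma~\ref{strong spherical}(a).

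Next I would exploit the factorization condition. Lemma~\ref{lemma stand1}(\ref{standard4}), in its Lie algebraic reformulation noted just after the lemma, furnishes minimal parabolic subalgebras $\pf_i\subset\sf_i$ with $\hf_1\oplus\hf_2=\hf+\pf_1+\pf_2$; a fortiori
\[
\hf_1\oplus\hf_2 \;=\; \hf+\sf_1+\sf_2.
\]
Passing to the quotient by the diagonal $\hf$ and identifying $(\hf_1\oplus\hf_2)/\hf\simeq\hf_2$ via $p_2$, this reads as a reductive factorization
\[
\hf_2 \;=\; \alpha(\sf_1)+\sf_2
\]
of the simple Lie algebra $\hf_2$. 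Complexifying, Proposition~\ref{Oni2} forces this factorization to coincide (up to equivalence) with a row of Table~\ref{Onishchik}. The main obstacle is now the ensuing finite case check: for each pairing of entries of Lemma~\ref{strong spherical}(b) with $\hf_1\simeq\hf_2$, one has to see that the match fails. The key observation is that in Lemma~\ref{strong spherical}(a) the structural algebra $\sf_i$ is of the same classical family as $\hf_i$ (both of type $\sl$, $\so$, $\so^*$, $\sp$, or together $\spin(7)$), whereas every nontrivial entry of Table~\ref{Onishchik} pairs subalgebras of genuinely \emph{different} types, e.g.\ $\sl(2n,\C)=\sl(2n-1,\C)+\sp(n,\C)$, $\so(2n,\C)=\so(2n-1,\C)+\sl(n,\C)$, or $\so(16,\C)=\so(15,\C)+\spin(9,\C)$. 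Comparing types and (in the low-rank exceptional isomorphisms) dimensions rules out every combination, yielding the desired contradiction.

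Consequently, up to relabeling, $\hf_2=\gf_2$. Setting $\gf':=\gf_1$ and $\hf':=\hf_1$, the isomorphism $\phi_2$ identifies $(\gf,\hf)$ with $(\gf'\oplus\hf',\diag\hf')$, and its real sphericality is by definition the strong real sphericality of $(\gf',\hf')$. The full enumeration is then Lemma~\ref{strong spherical}(b), together with the trivial group case $\hf'=\gf'$ (which is strongly spherical as $(\gf'\oplus\gf',\diag\gf')$ is a symmetric pair), producing respectively cases~1b) and~1a) and recovering precisely the list of Theorem~\ref{thm:semisimplespherical}(1).
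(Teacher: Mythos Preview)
Your overall strategy coincides with the paper's: reduce to strong sphericality via Lemma~\ref{lemma stand2}(\ref{reduction condition}), and then exclude the ``mixed'' case $\hf_i\subsetneq\gf_i$ for both $i$ by deriving a reductive factorization $\hf_2=\alpha(\sf_1)+\sf_2$ and appealing to Onishchik. The reduction steps are correctly argued.

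The gap is in your final case check. Your ``key observation'' that every nontrivial row of Table~\ref{Onishchik} pairs subalgebras of \emph{genuinely different types} is false: the rows $\so(8,\C)=\so(7,\C)+\spin(7,\C)$ and $\so(8,\C)=\spin(7,\C)_++\spin(7,\C)_-$ pair two subalgebras that are abstractly isomorphic (both of type $B_3$). Hence, when $\hf_\C\simeq\so(8,\C)$ and $(\sf_i)_\C\simeq\so(7,\C)$, Onishchik does \emph{not} exclude the factorization: via a triality automorphism $\alpha$ of $\so(8,\C)$, one can have $\alpha(\so(7,\C))=\spin(7,\C)$ and then $\so(8,\C)=\alpha(\sf_1)+\sf_2$ genuinely holds. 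This arises precisely for $\hf=\so(n,\C)$ with $n=8$ (case~(\ref{i4})) and for $\hf=\so(p,q)$ with $p+q=8$ (case~(\ref{i5})). For the complex case the paper disposes of this by the dimension bound~\eqref{dim bd}, which is fine; but for the real forms $\so(p,q)$ with $p+q=8$ the paper needs a substantially more delicate argument: one shows both $\sf_i$ must be non-compact (Part~I, Lemma~2.4), then by dimension one of them is $\so(1,6)$, forcing $\hf\in\{\so(2,6),\so(1,7)\}$ of real rank~$\le 2$; finally one invokes that the $8$-dimensional spin representation of $\spin(p,q)$ with $p+q=7$ is real only for $\spin(7)$ and $\spin(3,4)$, and both are excluded (compactness, resp.\ real rank~$3$). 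Your phrase ``comparing types and (in the low-rank exceptional isomorphisms) dimensions'' does not cover this; it is neither a type comparison nor a mere dimension count. You need to supply this argument explicitly.
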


\begin{proof} The strongly spherical pairs were determined in Lemma 
\ref{strong spherical}(b) and they correspond exactly to diagrams
(S\ref{case4.5}) -- (S\ref{case4.6}). Thus it only
remains to exclude further cases.

For this we can assume that $(\gf,\hf)=(\gf_1\oplus\gf_2,\hf)$ 
with $\gf_1, \gf_2 \neq \hf$.  Let $S_i=\Str(G,H_i)$ for 
$i=1,2$. 
Then 
$H=H^0$ is real spherical for the transitive
action of $S_1 \times S_2$
(see Lemma \ref{lemma stand1}). 
Further it follows from Lemma \ref{lemma stand2} (\ref{reduction condition})
that both  $ [G_1 \times H ]/ H $ and $ [G_2\times H ]/ H$ need to be real spherical, i.e. both 
$(\gf_i, \hf)$ need to be one of the strongly real spherical pairs of Lemma \ref{strong spherical}. 
Using the lemma we can easily check that there are only the
following possibilities, and we claim that none of these lead to a real spherical pair
$(\gf_1\oplus\gf_2,\hf)$.
\begin{alignat*}{3}
&\gf_1=\so(n+1,\C)  &\quad&\gf_2=\gf_1                               &\quad&\hf=\so(n,\C)\\ 
&\gf_1=\so(p+1,q)   &&\gf_2=\gf_1 \text{ or } \so(p,q+1)        &&\hf=\so(p,q)         \\
&\gf_1=\su(p+1,q)   &&\gf_2=\gf_1 \text{ or } \su(p,q+1)        &&\hf=\su(p,q)         \\
&\gf_1=\sp(p+1,q)   &&\gf_2=\gf_1 \text{ or } \sp(p,q+1)        &&\hf=\sp(p,q)         \\
&\gf_1= \sF^2_4     &&\gf_2=\gf_1 \text{ or } \so(2,8) \text{ or } \so(1,9)       &&\hf=\so(1,8)         
\end{alignat*}

Consider first $\gf_1=\gf_2=\so(n+1,\C)$ and $\hf=\so(n,\C)$ with $n\ge 5$.  
Note that $ S_1=S_2=\SO(n-1,\C)$ 
up to isomorphism, and hence
$H=\SO(n,\C)$ is not homogeneous for 
$ S_1\times S_2$
by the factorization result of Onishchik (see  Proposition \ref{Oni2}), except when $n=8$.
However, a simple dimension count  with (\ref{dim bd})
excludes that $\SO(8,\C)$ is real spherical
for $\SO(7,\C)\times\SO(7,\C)$. 

\par The second case is more subtle. If this 
leads to a real spherical pair, 
then $\hf=\so(p,q)= \sf_1+\sf_2$ with
$ \sf_1,\sf_2=\so(p,q-1)$ or $\so(p-1,q)$, up to isomorphism.
According to Onishchik's list, this is only possible if $p+q=8$,
so that $\hf_\C=\so(8,\C)=\so(7,\C)+\spin(7,\C)$.
In addition, it follows from  Part I Lemma 2.4 that 
 $\sf_1$ and $\sf_2$ are both non-compact.  
Since moreover $H$ is real spherical for 
the action of  $S_1\times S_2$
we conclude by dimension count that
 $\sf_1$ or $\sf_2$ must be isomorphic to $\so(1,6)$. 
This forces that $\hf$ is either isomorphic to $\so(2,6)$ or $\so(1,7)$, 
and hence in particular its real rank is at most $2$.
The embedding of $\spin(7,\C)$ in $\so(8,\C)$ is given by the
8-dimensional spin representation. This representation is known to
yield a real representation of $\spin(p,q)$ where $p+q=7$
if and only if $p-q$ is congruent to $0$, $1$ or $7$ modulo $8$.
This then happens only for $\spin(7)$ and $\spin(3,4)$. 
The first possibility is excluded since  $\sf_1$ and $\sf_2$ 
are supposed to be non-compact, and the other one is excluded
as $\spin(3,4)$ has real rank $3$ and hence does not embed into $\hf$.

\par  In the remaining three cases 
the result of Onishchik immediately excludes the 
relevant factorizations of $\hf$.
\end{proof}

\subsection{The case where $\hf$ is semi-simple and not simple}

We begin with the case where
the projection $\hf\to\hf_1$ is an isomorphism
and where $\gf_2=\hf_2$.
The general case will be built on this.
The next lemma classifies these cases. 

\begin{lemma} \label{verification 2a}
For $k = 2$, the list of all spherical pairs $(\gf,\hf)$ with $\hf$ semi-simple but not simple, 
$\hf \simeq \hf_1$ and $\gf_2 = \hf_2$ is given by part {\rm (2a)} of Theorem~\ref{thm:semisimplespherical}.
Moreover, the structural algebras are as indicated in the diagrams.
\end{lemma}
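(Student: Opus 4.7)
The plan is to convert the lemma into a two–step enumeration and then read everything off the data already at our disposal.

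First I would set up the reduction. Since $\hf_2=\gf_2$, the projection $p_2\colon\hf\to\gf_2$ is surjective, and since $p_1\colon\hf\to\hf_1$ is an isomorphism, the subalgebra $\hf\subset\gf_1\oplus\gf_2$ is the graph of a surjection $\phi\colon\hf_1\twoheadrightarrow\gf_2$. In particular $\hf_1=\hf_1'\oplus\gf_2$ as Lie algebras, where $\hf_1':=\ker\phi$, and the embedding $\hf\hookrightarrow\gf_1\oplus\gf_2$ is $x\mapsto(x,x_2)$ with $x_2$ the $\gf_2$\nobreakdash-component of $x\in\hf_1$. This is exactly the situation covered by Corollary~\ref{tower-corollary} (with $\gf$ there replaced by $\gf_1$ and $\hf_2$ there by $\gf_2\triangleleft\hf_1$). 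The corollary then gives: $(\gf,\hf)$ is real spherical if and only if $(\gf_1,\hf_1)$ is real spherical and $(\gf_2\oplus\sf_2,\sf_2)$ is real spherical, where $\sf_2\subset\gf_2$ denotes the projection of $\str(\gf_1,\hf_1)$ to $\gf_2$; and in that case
\[
  \str(\gf,\hf)\simeq\bigl(\str(\gf_1,\hf_1)\cap\hf_1'\bigr)\oplus\str(\gf_2\oplus\sf_2,\sf_2).
\]
By definition the second condition is precisely that $(\gf_2,\sf_2)$ be strongly real spherical, so Lemma~\ref{strong spherical}(b) tells us exactly which pairs $(\gf_2,\sf_2)$ are admissible and what the corresponding $\str(\gf_2\oplus\sf_2,\sf_2)$ is.

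Second, I would carry out the enumeration. The input is the list of real spherical pairs $(\gf_1,\hf_1)$ with $\gf_1$ simple non-compact and $\hf_1$ semisimple but not simple; this is available from Part~I together with Tables~\ref{lcaph_class_symm}–\ref{lcaph_KKPS_class} (which also record $\str(\gf_1,\hf_1)$). For each such pair and each non-compact simple ideal of $\hf_1$ that we designate as the factor $\gf_2$, I would compute the projection $\sf_2$ of $\str(\gf_1,\hf_1)$ to this ideal and check against Lemma~\ref{strong spherical}(b) whether $(\gf_2,\sf_2)$ is strongly real spherical. The pairs that survive both tests are exactly the ones to be listed. The complementary piece $\str(\gf_1,\hf_1)\cap\hf_1'$ is read off directly from the same tables, while $\str(\gf_2\oplus\sf_2,\sf_2)$ is the column from Lemma~\ref{strong spherical}(b); putting these together via the displayed formula reproduces the structural algebras shown in the diagrams of (2a).

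The routine verifications are straightforward family by family: the classical series \mbox{(SS\ref{case4.7})–(SS\ref{case4.14})} come from taking $\gf_2$ to be the $\sl(2,\K)$, $\su(1,1)$, $\sp(1,\K')$, $\sp(2,\K)$ or $\sp(1,1)$ factor of the semisimple $\hf_1$ in Kr\"amer/Brion–Mikityuk style pairs, and in each case $\sf_2$ lands in a Gross–Prasad type subgroup of $\gf_2$ covered by cases \eqref{i2}, \eqref{i5}, \eqref{i10} of Lemma~\ref{strong spherical}; the rank-one families \mbox{(SS\ref{case4.16})–(SS\ref{case4.18})} come from the $\so(1,n)$, $\su(1,n)$, $\sp(1,n)$ cases with a compact factor $\mathfrak{e}$ in $\hf_1$ (for which strong sphericality of $(\gf_2,\sf_2)$ is automatic by Lemma~\ref{lemma stand2}(\ref{compact factor})); and the two exceptional cases (SS\ref{case4.22}) and (SS\ref{case4.21}) use (\ref{i13}) of Lemma~\ref{strong spherical} (the $\sl(2,\R)$ factor providing $\gf_2$, with $\sf_2$ compact so that strong sphericality is immediate).

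The one point where care is required — and which I would flag as the main obstacle — is the completeness of the enumeration: one must be sure that \emph{no} other pair $(\gf_1,\hf_1)$ from Part~I with a non-compact simple direct summand admits a strongly real spherical projection $\sf_2$. Here I would proceed by elimination using the dimension bound \eqref{dim bd} together with Onishchik's factorization theorem (Proposition~\ref{Oni2}) to rule out the remaining candidates, in the same style as in the proof of Proposition~\ref{prop-simple}: whenever $\hf_1$ has a non-compact simple ideal $\gf_2$ not appearing in the list, either the resulting $\sf_2$ is too small to give strong sphericality by Lemma~\ref{strong spherical}(b), or the required decomposition $\hf_1=\sf_1+\sf_2$ coming from Lemma~\ref{lemma stand1}(\ref{standard4}) is not permitted by Onishchik. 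Once this exhaustion argument is complete, the list in (2a) — with the asserted structural algebras — is exactly the image of the finite enumeration described above.
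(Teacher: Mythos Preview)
Your overall strategy matches the paper's: reduce via Corollary~\ref{tower-corollary} to checking whether $(\hf_2,\sf_2)$ is strongly real spherical, and read the input list from Tables~\ref{lcaph_class_symm}--\ref{lcaph_KKPS_class}. The classical enumeration and the formula for $\str(\gf,\hf)$ proceed essentially as you describe.

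However, there is a genuine gap in your treatment of the two exceptional cases (SS\ref{case4.22}) and (SS\ref{case4.21}). First, your reference to case~(\ref{i13}) of Lemma~\ref{strong spherical} is mistaken; that case concerns $(\sF_4^2,\so(1,8))$ and has nothing to do with $\gf_2=\sl(2,\R)$. More importantly, you assume that ``$\sf_2$ compact'' can simply be read off, but the tables record $\str(\gf_1,\hf_1)$ only as an abstract subalgebra of $\hf_1$, not how it projects onto the distinguished factor $\hf_2$. For $\gf_1=\sE_6^3$ one has $\str(\gf_1,\hf_1)=\uf(2)+\uf(2)$, and for $\gf_1=\sE_7^3$ one has $\str(\gf_1,\hf_1)=\so(6)\oplus\so(2)\oplus\so(2,1)$ with the $\so(2,1)$ factor lying inside $\hf_1'=\so(10,2)$ (Part~I, Remark~8.2(a)). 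In both cases the entire question is whether the projection $\sf_2\subset\sl(2,\R)$ is nonzero; if it were zero, the pair would \emph{not} be real spherical. The paper establishes $\sf_2=\so(2)\neq 0$ by separate ad hoc arguments: for $\sE_6^3$ a Hermitian center argument showing $\zf(\kf)\not\subset\su(5,1)$, and for $\sE_7^3$ a rather delicate analysis of the $\str$-module structure of $\gf_\C/\hf_\C$ and of the restricted root spaces, invoking the prehomogeneity result of Proposition~\ref{L-geometry} from Appendix~\ref{AppB}. Your proposal contains no substitute for this step, so as written it does not verify the exceptional rows of the list.
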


\begin{proof}  Let us first translate the problem and slightly change notation. Let $\gf$ be non-compact simple 
and $\hf\subset \gf$ be a semi-simple real spherical subalgebra
such that $\hf=\hf_1 \oplus \hf_2$ with $\hf_2$ non-compact simple.  Let  $\sf=\str(\gf,\hf)$
be the 
structural algebra and $\sf_2$ the projection of 
$\sf$ to $\hf_2$. The  
pairs which are called for in the lemma arise as 
$(\gf\oplus\hf_2, \hf)$, and according to 
Corollary \ref{tower-corollary} 
 the pair $(\gf\oplus\hf_2, \hf)$
is real spherical if and only if 
$(\hf_2 \oplus \sf_2, \sf_2)$ is real spherical. In particular, a necessary condition 
is that $(\hf_2, \sf_2)$ is real spherical.

\par We begin with the classical cases.  From Tables 
\ref{lcaph_class_symm} - \ref{lcaph_KKPS_class} we obtain the following Table \ref{Table X},
of real spherical pairs $(\gf, \hf)$ as just mentioned, for which 
$(\hf_2, \sf_2)$ is real spherical.

\newcounter{snumber}
\newcommand*{\slabel}[1]{{\rm (\refstepcounter{snumber}\thesnumber\label{#1})}}

\begin{table}[ht]
{\footnotesize
$$\begin{tabular}{l l l}
&$(\gf,\hf)$  & $(\hf_2,  \sf_2)$ \\ \hline
\slabel{s1}& $(\sl(n,\C), \sl(n-2,\C)+\sl(2,\C))$&$(\sl(2,\C), \C)$  for $n\geq 5$\\
\slabel{s2}& $(\sl(n,\R), \sl(n-2,\R)+\sl(2,\R))$&$ (\sl(2,\R), \R)$ for $n\geq 5$ \\
\slabel{s3}&$(\su(p,q), \su(p-1,q-1)+\su(1,1)) $&$ (\su(1,1), i\R)$  for $(p,q)\neq(2,2), (1,2)$\\
\slabel{s4}&$(\su(1,q), \ef+\su(1,q_2))$&$ (\su(1,q_2), \uf(q_2))$  for   $q_2\geq 1$  and $q-q_2\geq 2$\\
\slabel{s5} &$(\sl(n,\Hb), \sl(1,\Hb)+\sl(n-1,\Hb))$&$ (\sl(n-1,\Hb),\sl(n-2,\Hb))$\\
\slabel{s6}&$(\so(1,q), \ef+\so(1,q_2))$&$ (\so(1,q_2), \so(q_2)) $  for $q_2>1$ and $q-q_2\geq 3$\\
\slabel{s7}&$(\so^*(2n), \so^*(2n-4)+\su(2)+\sl(2,\R)) $&$(\sl(2,\R), \so(2))$\\
\slabel{s8}&$(\sp(n,\C), \sp(n-1,\C)+\sp(1,\C)) $&$(\sp(1,\C), \sp(1,\C))$\\
\slabel{s9} &$(\sp(n,\C), \sp(n-1,\C)+\sp(1,\C)) $&$(\sp(n-1,\C),\sp(n-2,\C)+\sp(1,\C))$\\
\slabel{s10}&$(\sp(n,\C), \sp(n-1,\C)+\sp(1,\R))$&$ (\sp(1,\R), \sp(1,\R))$\\
\slabel{s11} &$(\sp(n,\C), \sp(n-1,\C)+\sp(1,\R))$&$ (\sp(n-1,\C),\sp(n-2,\C)+\sp(1,\R))$\\
\slabel{s12} &$(\sp(n,\C), \sp(n-1,\C)+\sp(1))$&$ (\sp(n-1,\C),\sp(n-2,\C)+\sp(1))$\\
\slabel{s13}&$(\sp(n,\R), \sp(n-1,\R)+\sp(1,\R)) $&$(\sp(1,\R), \sp(1,\R))$\\
\slabel{s14} &$(\sp(n,\R), \sp(n-1,\R)+\sp(1,\R)) $&$(\sp(n-1,\R),\sp(n-2,\R)+\sp(1,\R))$\\
\slabel{s15}&$(\sp(n,\C), \sp(n-2,\C)+ \sp(2,\C))$&$ (\sp(2,\C),\sp(1,\C) + \sp(1,\C))$\\
\slabel{s16}&$(\sp(n,\R), \sp(n-2,\R)+ \sp(2,\R))$&$ (\sp(2,\R),\sp(1,\R) + \sp(1,\R))$\\
\slabel{s17}&$(\sp(p,q), \sp(p-1,q-1)+\sp(1,1)) $&$ (\sp(1,1), \sp(1) +\sp(1))$\\
\slabel{s18}&$(\sp(p,q),\sp(1)+ \sp(p-1,q))$&$ (\sp(p-1,q), \sp(p-1,q-1)+\sp(1))$\\
\slabel{s19}&$(\sp(1,q), \sp(q-q_2)+ \sp(1,q_2))$&$ (\sp(1, q_2), \sp(1) + \sp(q_2-1))$\\
\end{tabular}$$}
\medskip
\centerline{\rm\Tabelle{Table X}}
\end{table}

For the possibilities for $\ef$ in (\ref{s4}) and (\ref{s6}), see Table \ref{e-table}.

We claim that the following cases from above
give rise to a strongly spherical pair 
$(\hf_2, \sf_2)$, and hence $(\gf\oplus\hf_2,\hf)$ is real spherical. 
In each case we refer to the diagram in  Theorem \ref{thm:semisimplespherical}(2a)
which displays $(\gf\oplus\hf_2,\hf)$.

{\footnotesize
$$\begin{tabular}{lllll}
(\ref{s1})--(SS\ref{case4.7})&
(\ref{s4})--(SS\ref{case4.17})&
(\ref{s8})--(SS\ref{case4.10})&
(\ref{s15})--(SS\ref{case4.12})&
(\ref{s18})--(SS\ref{case4.20})
\\
(\ref{s2})--(SS\ref{case4.7})&
(\ref{s6})--(SS\ref{case4.16})&
(\ref{s10})--(SS\ref{case4.10})&
(\ref{s16})--(SS\ref{case4.12})&
(\ref{s19})--(SS\ref{case4.18})
\\
(\ref{s3})--(SS\ref{case4.9})&
(\ref{s7})--(SS\ref{case4.19})&
(\ref{s13})--(SS\ref{case4.10})&
(\ref{s17})--(SS\ref{case4.14})&
\end{tabular}$$}

For (\ref{s1}) and (\ref{s2}) this is well known. We also
note that all cases where $ \sf_2$ is compact 
are strongly spherical, since they reduce to 
$(\hf_2, \sf_2)$ by Lemma 
\ref{lemma stand2}(\ref{compact factor}).
These are (\ref{s3}), (\ref{s4}), (\ref{s6}), (\ref{s7}), (\ref{s17}) and (\ref{s19}). 
As for (\ref{s5}), we can exclude it as 
$(\hf_2, \sf_2)$ does not show up in
Lemma \ref{strong spherical}.
In (\ref{s8}), (\ref{s10}), (\ref{s13}) strong sphericality is obvious.
Both (\ref{s15}) and its real form (\ref{s16}) 
produce a spherical pair $(\gf\oplus\hf_2,\hf)$
since the complex form of this pair already belongs to the 
list of Brion-Mikityuk.
Further, (\ref{s18}) is an $\sp(1)$-extension of (S\ref{case4.4}),
hence strongly spherical.
In all these cases it is then easy to verify $\str(\gf\oplus\hf_2,\hf)$ 
in the corresponding diagram by means of (\ref{l descent}).

\par Among the classical cases we are left with 
(\ref{s9}), (\ref{s11}), (\ref{s12}), and  (\ref{s14}).
Note that (\ref{s9}) and (\ref{s14}) 
are special cases of (\ref{s8}), (\ref{s15}) and (\ref{s13}), (\ref{s16})
when $n=2,3$. Apart from these the
remaining cases can be excluded with the dimension bound 
  (\ref{dim bd}).

\par We turn to the exceptional cases. We recall from \cite{Brion}, \cite{Mik}
that here there are no complex strongly spherical pairs so that
we can disregard all cases where $\gf$ is quasisplit  (see Subsection \ref{subsub Lan} for the definition of quasisplit). 
Inspecting Tables \ref{lcaph_except_symm} - \ref{lcaph_KKPS_class}
we are left to check the following two cases for 
$(\gf,\hf,\hf_2)$:

\begin{enumerate}\setcounter{enumi}{19}
\item\label {e1}  $(\sE_6^3, \su(5,1) + \sl(2,\R), \sl(2,\R))$ with $\str(\gf,\hf)=\uf(2)+\uf(2)$. 
\item \label{e2}  $(\sE_7^3, \so(10,2)+ \sl(2,\R), \sl(2,\R))$ with $\str(\gf,\hf)=\so(6) \oplus \so(2) \oplus \so(2,1)$. 
\end{enumerate}

In these two cases we need to determine $ \sf_2$. Moreover, the particular case is spherical if and only if 
$ \sf_2\neq 0$. 
We will now show that this happens in both cases, and by that obtain the diagrams
(SS\ref{case4.22}) and (SS\ref{case4.21}).

\par We begin with (\ref{e1}) and $\gf=\sE_6^3$.  Note that the fact that 
$ \sf=\lf_\hf$ is compact implies 
that $\lf$ is elementary, hence the Levi part of a minimal parabolic subalgebra of $\gf$.
Observe that  
$\gf$ is Hermitian with maximal compact subalgebra 
$\kf=\so(10)\oplus \so(2)$. We recall that  
 $\zf(\kf)=\so(2)\subset \mf=\uf(4)\subset \lf$  
with $\mf$ defined in Subsection \ref{subsub Lan}.  
 Hence  $\zf(\kf)\subset  \sf$ as $ \sf=\uf(2) \oplus \uf(2)$
 contains the center of $\mf$.
  
We claim that $\zf(\kf)\not\subset \hf_1=\su(5,1)$ which implies $ \sf_2\neq 0$. 
Indeed, if $\zf(\kf)\subset\hf_1$, then $\zf(\kf)$ would act trivially on $\hf_2=\sl(2,\R)$, and 
since $\hf_2\cap\kf^\perp\neq \{0\}$ that would be a contradiction to the fact that 
$\zf(\kf)$ acts without non-trivial fixed points on $\kf^\perp$.
We have thus shown that $(\hf_2, \sf_2)=(\sl(2,\R), \so(2,\R))$ and thus 
$\str(\hf_2\oplus \sf_2, \sf_2)=\{0\}$. Moreover, 
$\sf\cap\hf_1=\su(2)\oplus\su(2)\oplus \uf(1)$
and thus  it follows from (\ref{l descent}) that
$\str(\gf\oplus\hf_2,\hf)$ 
is stated in (SS\ref{case4.22}).

\par We move on with the exceptional case $\gf=\sE_7^3$ with $\hf=\hf_1 \oplus \hf_2= \so(10,2) \oplus \sl(2,\R)$. 
Notice that the procedure mentioned in  Appendix \ref{appendix} 
to determine  the structural algebra
$\sf$ also gives $\lf$. In this particular case 
we obtain $\lf=\lf_1 \oplus \lf_2 \oplus \af_Z := \so(8) \oplus \so(2,1) \oplus \R^2$ 
with $\af_Z=\R^2$ the center of $\lf$. 
Further $ \sf=\so(6) \oplus \so(2,1) \oplus \so(2)$  with $\so(2,1)\subset \hf_1$ by  Part I, 
Remark 8.2 (a).
In order to show that $ \sf_2\neq 0$ it is sufficient to show that $\so(2) \not\subset\hf_1$. 
Assume the contrary and consider the $\hf$-module $V:=\gf_\C/\hf_\C$. 
Let $V_{32}$ be the $32$-dimensional spin representation of $\hf_{1,\C}$, then it is easy to see 
that $V=V_{32}\otimes \C^2$ 
as $\hf_\C$-module. Hence if $\so(2) \subset \hf_1$, we deduce that $V$, considered as 
$ \sf_{\C}$-module, decomposes with even multiplicities. 
On the other hand $V\simeq (\lf/ \sf\oplus \uf)_\C$ as $ \sf_{\C}$-module.  

\par In the next step we investigate the $ \sf$-module $\uf$.  We recall from \cite{OS}  
that the root system $\Sigma(\gf, \af_Z)$ 
is of type $BC_2$, 
$$\Sigma(\gf,\af_Z)=
\{ \pm \e_1, \pm 2\e_1, \pm \e_2, \pm 2\e_2, \pm \e_1 \pm\e_2\}.$$ 
We decompose $\uf=\uf_0 \oplus\uf_1$ where $\uf_0$ corresponds to the short roots and $\uf_1$ to 
the non-short roots.  All root spaces are $\lf$-modules.  

\par We first claim that $\uf_1$ decomposes with even multiplicities as an $\sf$-module.  
According to \cite{OS},  the long root spaces are 
real one-dimensional, hence trivial as 
$[\lf,\lf]$-modules and thus also as $\sf$-modules. 
We now assert that root spaces corresponding to roots  from $\{\pm \e_1\pm\e_2\}$ are  isomorphic as 
$\sf$-modules. 
Indeed, we have just seen 
that $\gf^{\pm 2\e_i}\simeq \R$ are trivial $\sf$-modules.  Now take for example the root space 
$\gf^{\e_1-\e_2}$
and $0\neq X_i \in\gf^{2\e_i}$. Then, finite dimensional $\sl(2)$-representation theory yields that 
bracketing with $X_2$ results in an $\sf$-equivariant isomorphism
$\gf^{\e_1 -\e_2} \to \gf^{\e_1+\e_2}$.  Iterating 
then implies  that all $\gf^\alpha$ with $\alpha\in \{\pm \e_1 \pm\e_2\}$ are isomorphic as 
$\sf$-modules (even as $[\lf,\lf]$-modules).   Our claim follows.

\par We are left with $\uf_0$ which consists of short roots $\gf^{\pm\e_i}$.   First note that $\gf^{\e_i}$ is 
isomorphic to $\gf^{-\e_i}$  as $\lf_\hf$-module via the bracketing argument from above. 
According to \cite{OS} the $\gf^{\e_i}$ are real 8-dimensional.  Let us fix one, say $W:=\gf^{\e_1}$. 
Since $W_\C$ is a  prehomogeneous $\lf_\C$-module by Proposition \ref{L-geometry} it 
follows that there are three possibilities for $W$  (triality): 
$$ \R^8 \qquad \spin(8)_+ \qquad \spin(8)_-$$ 
where $\R^8$ indicates the standard representation. Observe that 
$W$ is uniquely determined by its branching to $\so(6)\simeq \su(4)$ which is respectively 
\begin{equation}\label{branching} \R^6 \oplus \R^2  \qquad \C^4 \qquad  (\C^4)^*\, .\end{equation}
To sum up: if $\gf^{\e_1}$ and $\gf^{\e_2}$ are isomorphic, then $\uf_0$ decomposes with even multiplicities 
as an $\lf_\hf$-module. Otherwise, the $\lf_\hf$-module  $\uf_0$ is a sum of two which are listed in
(\ref{branching}).

\par Recall that $V$ was assumed to decompose with even multiplicities. We have just shown that 
$\uf_1$ decomposes with even multiplicities and determined the possible branchings for $\uf_0$. 
Now, $\so(8,\C)/ \so(6,\C) \oplus \so(2,\C) =\C^6\oplus (\C^6)^*$ 
and we observe that $\C^6$ and $(\C^6)^*$
are inequivalent as $\so(6,\C) \otimes\so(2,\C)$-modules.  Together with our branching results for 
$\uf_0$ and $\uf_1$ we thus obtain a contradiction. 

\par It follows that $(\hf_2,\sf_2)=(\sl(2,\R), \so(2,\R))$ and hence $\str(\hf_2\oplus\sf_2,\sf_2)=\{0\}$. 
In addition $\sf\cap\hf_1=\so(6) \oplus\so(2,1)$ by Remark 8.2 in Part I, 
and we obtain $\str(\gf\oplus\hf_2,\hf)$ via (\ref{l descent}). 
\end{proof}

{\bf Proof of Theorem~\ref{thm:semisimplespherical} (2b):}  Let us now consider the case of a semi-simple 
and not simple real spherical subalgebra $\hf\subset \gf=\gf_1\oplus\gf_2$ with $\hf_i\neq\gf_i$ for $i=1,2$.
We use the notation of (\ref{decomp_h}).
Since $\hf^0\neq 0$ both $\hf_1$ and $\hf_2$ cannot be simple. Hence
it is no loss of generality to assume that $\hf_1=\hf_1' \oplus \hf^0$
is semi-simple and not simple. 
First observe that $(\gf_1 \oplus \hf_2, \hf)$ is real spherical by Lemma \ref{lemma stand2} (\ref{reduction condition}).
Then, by Lemma \ref{lemma stand1} (\ref{standard3}) we deduce that $(\gf_1 \oplus\hf^0, \hf_1)$ is real 
spherical. Let $\hf^{0,+}\triangleleft \hf^0$ be a simple non-compact factor. 
It follows that $(\gf_1 \oplus\hf^{0,+},\hf_1)$ must show up in the list of 
Theorem ~\ref{thm:semisimplespherical} (2a). From  this list we now deduce  
that $\hf_1$ has only two simple factors, except for 
$\gf_1=\so^*(2n+4)$ or $(\gf_1,\hf_1)=(\so(1,n),\so(4)+ \so(1,n-4))$.

\par Arguing similarly we also obtain that $(\hf^{0,+}\oplus\gf_2, \hf_2)$ is real spherical and thus must show 
up in the lists of Theorem ~\ref{thm:semisimplespherical} (1b), (2a).

\par Recall  from Section \ref{dec L_i}
that $(\gf,\hf)$ is real spherical if and only if $H^0$ is spherical as an  $S_1'' \times S_2''$-variety,
and  (see (\ref{iso L_i})) that 
then $\hf^0=\sf_1''+ \sf_2''$,
as a necessary condition for real sphericality. 

\par Let us assume first that $\hf^0=\hf^{0,+}$.  Inspecting the list in Theorem \ref{thm:semisimplespherical} (2a)
we see that $\hf^0$ is either symplectic or of rank one. 
We begin with the symplectic  case and recall that symplectic Lie algebras do not admit 
non-trivial factorizations.  Hence if $(\gf,\hf)$ is real spherical, then this forces that 
 $\sf_1''$ or $\sf_2''$ equals
$\hf^0$.  If $ \sf_1''=\hf^0$, then this means that we are in the situations (\ref{s8}), (\ref{s10}) or (\ref{s13}) in 
Table \ref{Table X}.  We can draw the same conclusion if 
$ \sf_2''=\hf^0$.  Thus in case $\hf^0=\hf^{0,+}$
is symplectic we may assume that $(\gf_1\oplus \hf^0, \hf_1)$ is one of the pairs 
$(\gf\oplus\hf_2,\hf)$ in (\ref{s8}), (\ref{s10}) 
or (\ref{s13}). In particular $\hf^0= \sf_1''=\sp(1,\K)$. 
Being in this situation $(\gf,\hf)$ then is real spherical 
if and only if $ \sf_2''$ contains $\gl(1,\K)$, $\so(2,\K)$ or $\sp(1,\R)$.

This situation 
only occurs for $(\hf^0\oplus\gf_2,\hf_2)$ being from the list 
of Theorem \ref{thm:semisimplespherical} (2a), to be precise
for 
$$(\ref{s1}), (\ref{s2}), (\ref{s3}), (\ref{s4})  \ \text{for $q_2=1$}, (\ref{s6})\  \text{for $q_2=2,3$}, 
(\ref{s7}), (\ref{s8}), (\ref{s10}), 
(\ref{s11})\ \text{for $n=2$},
(\ref{s13}), (\ref{e1}), (\ref{e2})$$
in Table \ref{Table X}.  In all these cases the structural algebras $\sf$  can be read off via 
(\ref{formula L}). In particular,  
 we have $\sf = \sf_1' \oplus \sf_2'$   except 
when $(\hf^0\oplus\gf_2,\hf_2)$ is also of type  (\ref{s8}), (\ref{s10}) or (\ref{s13}). In the 
latter case we have 
 $\sf = \sf_1' \oplus \gl(1,\K)\oplus \sf_2'$, 
where $\K=\R,\C$ is determined by 
 $\hf^0=\sp(1,\K)$, and where $\gl(1,\K)$ is embedded tridiagonally
 into $\hf=\hf_1' \oplus \hf^0 \oplus\hf_2'$.

We move on where $\hf^0=\hf^{0,+}$ is of rank one and not symplectic,
and hence equals $\so(1,p)$ or $\su(1,p)$.
For that we first  recall the following fact from  Part I, Lemma 2.4:  
If $ \rf=\rf_1 +\rf_2$ is a non-trivial factorization of 
a  simple algebra  $\rf$ with one factor compact, then 
 $\rf$ is compact. 

We begin with the case where $\hf^0=\so(1,p)$ for $p\geq 4$. Then 
 $\sf_1''$ is compact and hence we must have 
$ \sf_2''=\hf^0$
which is not possible.  Likewise we can exclude the case where $\hf^0=\su(1,p)$ for $p>1$. 

\par To summarize, the cases where $\hf^0=\hf^{0,+}$ give rise to (SS\ref{case4.23}) - (SS\ref{case4.38}). 
It remains to show that $\hf^0=\hf^{0,+}$.  
Suppose the contrary and let $\hf^0=\hf^{0,+}\oplus\hf^{0,-}$ with $\hf^{0,-}\neq \{0\}$.  Let $\hf_i^{-}$ be the projection 
of $\hf^{0,-}$ to $\hf_i$. We consider 
$\tilde \hf= \hf_1^- \oplus \hf$ and observe that $\tilde\hf^0=\hf^{0,+}$. Hence the pair $(\gf,\tilde\hf)$
must be one of the cases   (SS\ref{case4.23}) - 
(SS\ref{case4.38}).  
Inspecting this list we quickly see that this is impossible. Indeed, let us begin with the case where 
$\tilde\hf$ has three factors. Then $\tilde \hf_1'\simeq \tilde \hf_2'$ 
since $\tilde\hf \simeq \hf_1^- \oplus \hf^{0,+}\oplus \hf_1^{-}$.
Hence $(\gf,\tilde \hf)$ needs to be  (SS\ref{case4.27}) for $n=m$ and $\K=\K'$. 
A simple dimension count then shows that 
(\ref{dim bd}) is violated and thus 
$(\gf,\hf)$ is not real spherical. Likewise
the case where $\tilde \hf$ has four factors is ruled out right away. 
\qed

\subsection{The case where $\hf$ is not semi-simple}

\
\smallskip

{\bf Proof of Theorem~\ref{thm:semisimplespherical} (3a):}
We only have to determine the cases in Theorem~\ref{thm:semisimplespherical}(1),(2)
where one can enlarge $\hf_i$ inside $\gf_i$ for $i=1$ or $2$ to have a non-trivial center. 
Going through the cases one by one we see that this is possible for
(S2), (S5), (S6), (S7), (SS9), (SS10), (SS14), (SS15), (SS21), (SS22), (SS25), (SS26), and (SS27).
This then results in the asserted list.
\qed

For the proof of (3b) of the theorem
we consider as  earlier
first the case where $\gf_2=\hf_2$. 
We assume that $\hf$ is reductive, but not semi-simple.

\begin{lemma}\label{lemma non-ss}  Suppose that $\gf$ is non-compact simple and 
$\hf=\hf'\oplus\hf''$ is a reductive subalgebra
of $\gf$ with $\hf'$ not semi-simple and $\hf''$ non-compact simple. 
Assume $(\gf,\hf)$ is real spherical, and
let $\sf''$ be the projection of $\str(\gf,\hf)$ to $\hf''$.
Then $(\gf\oplus \hf'', \hf)$ is 
real spherical with $(\gf\oplus\hf'', [\hf,\hf])$ not real spherical if and only if 
$(\gf,\hf,\hf'',\sf'')$ is one of the following:

\begin{enumerate}
\item\label{r0}  $(\sl(n+2,\K), \sf(\gl(n,\K)\oplus\gl(2,\K)), \sl(2,\K), \gl(1,\K))$ for $\K=\R,\C$ and $n=1,2$, 
\item\label{r00} $ (\su(2,2), \sf(\uf(1,1)\oplus\uf(1,1)), \su(1,1), \uf(1))$,
\item \label{r3} $(\su(p+1,q+1), \uf(p,q+1), \su(p,q+1), \uf(p,q))$ for $p=q,q+1$, 
\item \label{r1} $(\sl(n,\K), \gl(n-1,\K), \sl(n-1,\K), \gl(n-2,\K))$ for $\K=\R,\C$ and $n\geq 3$,
\item \label{r2} $(\sl(n+1,\Hb), \gl(n,\Hb)+\ff, \sl(n,\Hb), \gl(n-1,\Hb)+\ff)$  for $\ff\subset \sl(1,\Hb) $, 
\item\label{r8} $(\so(1,n),\so(2)+\so(1,n-2),\so(1,n-2),\so(n-2))$,
\item \label{r6} $(\sE_6^4, \gl(1,\R)+\so(9,1), \so(9,1), \so(1,1)+\spin(7))  $. 
\end{enumerate}
Moreover, the structural algebras $\str(\gf\oplus\hf'',\hf)$ are given by 
$$\begin{tabular}{c l}
$(\gf,\hf)$  & $\str(\gf\oplus\hf'',\hf)$ \\ \hline
{\rm(\ref{r0})} & $0$\\
{\rm(\ref{r00})}& $0$\\ 
{\rm(\ref{r3})} &  $0$\\
{\rm(\ref{r1})} &  $0$\\
{\rm(\ref{r2})} & $ \ff$\\ 
{\rm(\ref{r8})} &{$\so(n-3)$}\\
{\rm(\ref{r6})} & $\sG_2$
\end{tabular}$$
\end{lemma}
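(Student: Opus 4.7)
The plan mirrors that of Lemma \ref{verification 2a}: reduce sphericality of the enlarged pair to a lower-dimensional question via Corollary \ref{tower-corollary}, and then run through the short list of real spherical pairs $(\gf,\hf)$ with $\gf$ simple and $\hf$ reductive but non-semi-simple established in Part I.

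First I would apply Corollary \ref{tower-corollary} to the decomposition $\hf=\hf'\oplus\hf''$ with $\hf_2=\hf''$: the pair $(\gf\oplus\hf'',\hf)$ is real spherical if and only if $(\hf''\oplus\sf'',\sf'')$ is real spherical (with $\sf''$ embedded diagonally). Applied to $[\hf,\hf]=[\hf',\hf']\oplus\hf''$, the corollary gives the analogous reduction for the semi-simple part, provided $(\gf,[\hf,\hf])$ is itself real spherical; if not, $(\gf\oplus\hf'',[\hf,\hf])$ fails to be real spherical automatically by Lemma \ref{lemma stand1}\eqref{standard3}, and this is typically the mechanism by which the ``not spherical'' hypothesis is satisfied.

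Next I would enumerate candidates: for each real spherical pair $(\gf,\hf)$ from Part I with $\gf$ simple, $\hf$ reductive but not semi-simple, and $\hf$ admitting a non-compact simple ideal $\hf''$, I would read off $\str(\gf,\hf)$ from Tables \ref{lcaph_class_symm}--\ref{lcaph_KKPS_class} and project it to obtain $\sf''$. Sphericality of $(\hf''\oplus\sf'',\sf'')$ is then tested: when $\sf''$ is compact, Lemma \ref{lemma stand2}\eqref{compact factor} reduces this to sphericality of $(\hf'',\sf'')$, which is available from Lemma \ref{strong spherical}(a); otherwise I invoke Lemma \ref{strong spherical}(b) or the already-settled portions of Theorem \ref{thm:semisimplespherical}.

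Finally I would separate the candidates satisfying both conditions from those where $(\gf\oplus\hf'',[\hf,\hf])$ remains real spherical. By the same reduction used for (3a), the latter are precisely those obtained by adjoining a central summand to a pair from Theorem \ref{thm:semisimplespherical}(1)--(2); discarding them leaves exactly the cases (\ref{r0})--(\ref{r6}). The structural algebras $\str(\gf\oplus\hf'',\hf)$ then follow from formula (\ref{L descent}) applied to the decomposition of $\str(\gf,\hf)$. The delicate case will be (\ref{r6}) for $\sE_6^4$: obtaining $\sG_2$ as the structural algebra requires tracing how $\spin(7)$ sits inside the Levi and computing $\spin(7)\cap\SO(7)=\sG_2$, in the spirit of the $\sF_4^2$ calculation in Lemma \ref{strong spherical}; this, together with verifying the ``not spherical'' hypothesis in the borderline low-rank instances of (\ref{r0})--(\ref{r3}), is where the bulk of the case-by-case work concentrates.
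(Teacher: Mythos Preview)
Your proposal is correct and follows essentially the same approach as the paper: reduce via Corollary \ref{tower-corollary} to strong sphericality of $(\hf'',\sf'')$, extract the candidates from Tables \ref{lcaph_class_symm}--\ref{lcaph_KKPS_class}, and compute the structural algebras by the descent formula (\ref{l descent}). The only notable difference is in case (\ref{r6}): the paper does not compute $\spin(7)\cap\SO(7)=\sG_2$ directly but instead iterates (\ref{l descent}) twice, passing through $\str(\so(9,1)\oplus[\so(1,1)\oplus\spin(7)],\so(1,1)\oplus\spin(7))$ and then $\str([\so(1,1)\oplus\spin(7)]\oplus\sG_2,\sG_2)$, reading off the intermediate structural algebras from the tables; your proposed direct argument would also work and is in the same spirit.
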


\begin{proof} First note that $(\gf\oplus \hf'', \hf)$  is real spherical  if and only 
if $(\hf'', \lf'')$ is strongly real spherical by Corollary \ref{tower-corollary}.  Under the 
additional assumption 
that $(\gf\oplus\hf'', [\hf,\hf])$ is not real spherical the asserted list is extracted 
from Tables 
\ref{lcaph_class_symm} - \ref{lcaph_KKPS_class}.  

The table for $\str(\gf \oplus \hf'', \hf)$
follows inductively via (\ref{l descent}).  This is straightforward for (\ref{r0}) - {(\ref{r8})}. 
For (\ref{r6}) we obtain with Corollary \ref{tower-corollary} applied iteratively: 
{\belowdisplayskip=-12pt  
\begin{eqnarray*} \str(\gf\oplus\hf'',\hf)&\simeq&  \str(\so(9,1) \oplus[\so(1,1)\oplus \spin(7)],\so(1,1)\oplus\spin(7))\\
&\simeq& \str([\so(1,1)\oplus \spin(7)] \oplus \sG_2,\sG_2)\\ 
&\simeq&  \sG_2\, .\end{eqnarray*}
}
\end{proof}

{\bf Proof of Theorem~\ref{thm:semisimplespherical}(3b):}   
Let us assume first that $(\gf_1\oplus\gf_2,\hf)$ is of the type where $\gf_2=\hf_2$. Then we can use Lemma 
\ref{lemma non-ss}, from which we obtain 
all diagrams of (3b) except 
(R\ref{case4.58}) and (R\ref{case4.60}).
\par Having classified all cases with $\hf$ reductive and $\gf_2=\hf_2$ we can now complete the 
proof.  Suppose that  $(\gf_1\oplus\gf_2,\hf)$ is  real spherical 
with $\hf$ reductive and not semi-simple,  that $(\gf_1\oplus\gf_2,[\hf,\hf])$ is
not real spherical, and that $\hf_i\neq\gf_i$ for $i=1,2$. 
It is no loss of generality to assume that $\hf_1$ is 
not semi-simple. 
We let $\hf^0\triangleleft\hf$ be a simple non-compact ideal 
with non-zero projections to $\hf_1$ and $\hf_2$. 
Then 
$\hf_i\simeq \tilde\hf_i:=\hf_i'\oplus\hf^0$ for $i=1,2$. 
The fact that $(\gf,\hf)$ is real spherical implies 
that 
$(\gf_1\oplus \hf^0, \tilde \hf_1)$ and $(\hf^0 \oplus \gf_2, \tilde\hf_2)$ are both real spherical. 
Note that $(\gf_1\oplus \hf^0, \tilde \hf_1)$ is part of what was already  just classified in Theorem \ref{thm:semisimplespherical}(3a,b). Likewise all cases for $(\hf^0\oplus\gf_2,\tilde \hf_2)$ with $\tilde \hf_2$ possibly semi-simple were already classified.
Moreover if we let $\lf_i^0$ be the projection of $\str(\gf_i,\hf_i)$ to $\hf^0\subset \hf_i$, then 
$\hf^0$ has to be real spherical for the action of $\lf_1^0 \oplus \lf_2^0$. 
\par Using the already obtained tables it is now an elementary procedure to determine all the 
remaining cases. 
As an example we do the computation for the 
case where $(\gf_1\oplus \hf^0, \tilde \hf_1)$ is  
(R\ref{case4.56}).
In this case $(\gf_1\oplus \hf^0, \tilde \hf_1)= (\sl(n+2,\K) \oplus \sl(2,\K), \gl(2,\K))$ with $n=1,2$ and $\lf_1^0=\gl(1,\K)$.
A quick inspection of the already 
obtained lists implies that $(\gf_2\oplus \hf^0, \tilde\hf_2)$ is of one of the forms:
\begin{itemize}
\item $(\gf_1\oplus \hf^0, \tilde \hf_1)$, 
\item $(\hf^0 \oplus \hf^0, \hf^0)$, 
\item (SS\ref{case4.10}) with the reversed roles of $\K$ and  $\K'$,
and $\lf_2^0=\sl(2,\K)$.
\end{itemize}
The first two of these give nothing new, and the  
last case gives (R\ref{case4.58}).

 The remaining cases (R\ref{case4.48})-(R\ref{case4.53a}), (R\ref{case4.57})-(R\ref{case4.59}) and (R\ref{case4.47})  are treated similarly. The two cases
where $(\gf_1\oplus \hf^0, \tilde \hf_1)$ is
(R\ref{case4.57}) and (R\ref{case4.42})
contribute the values $p=1$ and $p=0$
respectively, of (R\ref{case4.60}).
The other cases do not contribute anything new.
\qed

\smallskip
This completes the proof of Theorem \ref{thm:semisimplespherical}.

\section{More than two factors}

 Having classified all cases with two factors and their corresponding structural algebras, it is now a manageable task to complete 
the classification. The key is Proposition \ref{four exclusion} which limits the investigation to the case $k=3$. 
We start our examination with the real spherical triple spaces.

\par Consider $G_1\times \dots\times G_k$. 
For $i,j,\ldots\in\{1,2,\dots k\}$ we write $H_{ij\dots}$ 
for the projection
of $H$ to $G_{ij\dots}=G_i\times G_j\times\cdots$ and 
we set $Z_{ij\dots}= G_{ij\dots}/ H_{ij\dots}$.

\newcommand*{\pnlabel}[1]{{\rm (BM\refstepcounter{pairnumber}\thepairnumber\label{#1})}}

\begin{lemma} \label{triple}Let $\gf$ be a simple non-compact Lie algebra. Then $(\gf \oplus \gf \oplus\gf , \diag(\gf))$
is real spherical if and only if $\gf=\so(1,n)$ for $n\geq 2$.  The
corresponding diagram reads
$$ \triple{\so(1,n)}{\so(n-2)}{n\geq 2}{case5.0}$$
\end{lemma}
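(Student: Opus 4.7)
The plan is to reformulate the triple case as the strong real sphericality of a group-type pair. By definition, $(\gf^3,\diag\gf)$ is real spherical if and only if $(\gf\oplus\gf,\diag\gf)$ is strongly real spherical. The latter pair is of group type with adapted parabolic $\pf\oplus\bar\pf$ (for $\pf=\mf+\af+\nf$ a minimal parabolic of $\gf$ and $\bar\pf$ its opposite), and one computes $\str(\gf\oplus\gf,\diag\gf)=\diag\lf$ where $\lf:=\mf+\af$. Applying Corollary \ref{grossprasad_cond} reduces the question to the strong real sphericality of $(\gf,\lf)$, i.e.\ to the real sphericality of $(\gf\oplus\lf,\diag\lf)$. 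Because $\lf$ is elementary (its derived algebra is the compact $[\mf,\mf]$) it coincides with its own minimal parabolic, and a direct calculation shows that $(\gf\oplus\lf,\diag\lf)$ is real spherical if and only if $\Ad(L)$ has an open orbit on $\bar\nf$; this uses the identification of the open Bruhat cell of $G/P$ with $\bar N\simeq\bar\nf$, on which $L$ acts by the adjoint action. Moreover the second half of Corollary \ref{grossprasad_cond} identifies $\str(\gf^3,\diag\gf)$ with the $L$-stabilizer of a point in this open orbit.

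Sufficiency for $\gf=\so(1,n)$ is then immediate. Here $\bar\nf=\bar\gf^{-\alpha}$ is abelian of real dimension $n-1$, the group $M\simeq\SO(n-1)$ acts as the standard representation, and $A$ acts by dilation, so $L$ acts transitively on $\bar\nf\setminus\{0\}$. The stabilizer of a unit vector is $\SO(n-2)$, which accounts for the $\so(n-2)$ appearing in the displayed diagram.

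For the converse, I would exhibit, for every simple non-compact $\gf$ not isomorphic to $\so(1,n)$, a non-constant $L$-invariant on $\bar\nf$. Since $M$ is compact, $|X|^2:=-B(X,\theta X)$ (with $B$ the Killing form and $\theta$ the Cartan involution) is an $M$-invariant Euclidean norm on each restricted root space $\bar\gf^{-\gamma}$, and $A$ rescales this norm by $e^{-\gamma(H)}$. Thus any monomial $\prod_\gamma|X_\gamma|^{c_\gamma}$ with $\sum c_\gamma\gamma=0$ in $\af^*$ is $L$-invariant on the locus where the denominators are nonzero, and the problem reduces to producing such a relation. If $\gf$ has real rank $r\geq 2$, connectedness of the Dynkin diagram of the (irreducible) restricted root system $\Sigma$ furnishes two simple roots $\alpha,\beta$ with $\alpha+\beta\in\Sigma^+$, yielding the non-constant $L$-invariant $|X_\alpha|\,|X_\beta|/|X_{\alpha+\beta}|$. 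If $\gf$ has real rank $1$ and is not $\so(1,n)$, then $\Sigma=BC_1$ so $\bar\gf^{-2\alpha}\neq 0$, and the analogous non-constant $L$-invariant is $|X_\alpha|^2/|X_{2\alpha}|$.

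The main technical obstacle is the first paragraph: identifying the structural algebra of the group-type pair $(\gf\oplus\gf,\diag\gf)$ and translating strong real sphericality of $(\gf,\lf)$ into the orbit problem on $\bar\nf$ requires careful bookkeeping of parabolics, their Levis, and the Bruhat decomposition. Once that framework is in place, sufficiency for $\so(1,n)$ is a direct orbit computation, and the exclusion of all other simple non-compact $\gf$ rests on the explicit invariants exhibited above.
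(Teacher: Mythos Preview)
Your argument is correct and follows essentially the same route as the paper: both reduce the question to whether $L=MA$ has an open orbit on $\nf$ (equivalently $\bar\nf$), and both identify the structural algebra with the isotropy of a generic point. The only notable difference is in the necessity step: where you construct explicit $L$-invariants $|X_\alpha||X_\beta|/|X_{\alpha+\beta}|$ (rank $\ge 2$) and $|X_\alpha|^2/|X_{2\alpha}|$ (type $BC_1$), the paper argues more tersely that since $M$ is compact and preserves the norms $|X_\gamma|$, an open $MA$-orbit forces $A$ to act with open orbit on the space of radii $(\R_{>0})^{|\Sigma^+|}$, hence $\dim A=|\Sigma^+|$, which for an irreducible root system means $\Sigma=A_1$. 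Your invariants are precisely the obstructions to this equality, so the two arguments are equivalent; the paper's phrasing is just a cleaner packaging of the same idea.
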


\begin{proof}   Recall the Langlands decomposition $\pf =\mf +\af+\nf$ of the 
be a minimal parabolic of $\gf$ (see Subsection \ref{subsub Lan}),  and note that 
$Z_{12}$ is real spherical with structural algebra 
$\mf+\af$ according to diagram (S\ref{case4.1}).
We consider the tower $H'= H_{12}\times H_3=
\diag_2(G) \times G \supset H =\diag_3(G)$. Then according to 
Proposition \ref{tower-factor}, $Z$ is real spherical if and only if 
$MA\times P$ has an open orbit on $G$
and in this case the structural algebra is 
$\str(\gf\oplus\mf\oplus\af,\mf\oplus\af)$. This means that $MA$ has an open orbit on $N$. 
Since $MA$ preserves restricted root spaces, this implies that $\dim A$ equals the number of positive roots for the restricted 
root system of $\gf$ with respect to $\af$. It follows that  
the restricted root system is of type $A_1$, i.e. $\gf \simeq 
\so(1,n)$ for some $n\geq 2$.  Conversely, in these cases $MA$ has an open orbit on $N$.
\end{proof}

Suppose that $(\gf_1, \hf_1)$ and $(\gf_2, \hf_2)$ are reductive pairs such that 
$\hf_1=\hf_1'\oplus\hf^0$ and $\hf_2=\hf_2'\oplus\hf^0$ 
share a common ideal $\hf^0$ (up to isomorphism). 
Then we refer to the reductive pair 
\begin{equation}\label{defi glue}
 (\gf,\hf)=(\gf_1\oplus\gf_2,\hf_1'\oplus\hf_2'\oplus \diag(\hf^0)).
\end{equation}
as the {\it glueing} of $(\gf_1, \hf_1)$ and $(\gf_2,\hf_2)$ {\it along} $\hf^0$.

\begin{theorem}\label{Thm k=3}
The strictly indecomposable real spherical spaces with $k=3$ are the triple cases 
from Lemma \ref{triple} and the following: 
$$
\begin{tabular}{c} 
\hline
$\repNM{\sp(n,\K)}{\sp(2,\K'')}{\sp(m,\K')\qquad}
{\begin{matrix}\sp(n-1,\K)\\ \quad| \\ \sp(n-2,\K)\end{matrix}}
{\begin{matrix}\sp(1,\K'')\\ | \\ 0\end{matrix}}
{\begin{matrix}\sp(1,\K'')\\ | \\ 0\end{matrix}}
{\begin{matrix}\sp(m-1,\K')\\ |\quad \\ \sp(m-2,\K')\end{matrix}}
{m,n\geq 1}{\K''\subset\K, \K'}{case5.1}$\\ \hline
$\repMM{\sp(n,\K)}{\sp(m,\K')}{\sp(k,\K'')}
{\begin{matrix}\sp(n-1,\K)\\ \qquad| \\ \sp(n-2,\K)\end{matrix}}
{\begin{matrix}\sp(m-1,\K')\\ | \\ \sp(m-2,\K')\end{matrix}}
{\begin{matrix}\sp(1,\K''')\\ | \\ 0\end{matrix}}
{\begin{matrix}\sp(k-1,\K'')\\ |\qquad \\ \sp(k-2,\K'')\end{matrix}}
{m,n,k\geq1}{\K'''\subset\K,\K',\K'' } {case5.2}$ \\ \hline
\end{tabular}
$$

Here $\K,\K',\K'',\K'''=\R,\C$ subject to the following additional conditions:
\begin{enumerate}
\item In {\rm (BM\ref{case5.1})} one has $\K'=\K''$ if $m=1$, 
{resp. $\K=\K''$ if $n=1$.}
\item In {\rm (BM\ref{case5.2})} one has $\K'''=\K$ if $n=1$, $\K'''=\K'$ if $m=1$ and $\K'''=\K''$ if $k=1$. 
\end{enumerate}
and the absolutely spherical cases 
are those with $\K=\K'=\K''=\K'''$.
\end{theorem}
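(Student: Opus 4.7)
The plan is to use the $k=2$ classification of Theorem \ref{thm:semisimplespherical} as building blocks. Let $(\gf,\hf)$ be strictly indecomposable and real spherical with $\gf=\gf_1\oplus\gf_2\oplus\gf_3$, and write $\hf_I$ for the projection of $\hf$ to $\gf_I=\bigoplus_{i\in I}\gf_i$. Applying Lemma \ref{lemma stand1}(\ref{standard3}) to the coarse splittings $\gf=\gf_I\oplus\gf_{\{1,2,3\}\setminus I}$, each $(\gf_I,\hf_I)$ is real spherical, so every pairwise pair $(\gf_{ij},\hf_{ij})$ appears in Theorem \ref{thm:semisimplespherical} together with its structural algebra.

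First I would decompose $\hf=\hf_{\rm el}\oplus\bigoplus_\alpha\hf^{(\alpha)}$ and form the bipartite projection graph with vertices $\{\gf_i\}$ on one side and the simple non-compact ideals $\{\hf^{(\alpha)}\}$ on the other, adding an edge whenever the corresponding projection is non-zero. Strict indecomposability of $(\gf,\hf_{\rm n})$ is equivalent to connectedness of this graph. There are two shape cases: either some $\hf^{(\alpha)}$ meets all three $\gf_i$ (the \emph{star} case) or no such triple ideal exists and the graph is a \emph{path}.

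In the star case, iterating Corollary \ref{grossprasad_cond} along the tower $\hf^{(\alpha)}\subset\hf^{(\alpha)}\oplus\hf^{(\alpha)}\subset\gf$ reduces the question to whether the pure triple $(\hf^{(\alpha)}\oplus\hf^{(\alpha)}\oplus\hf^{(\alpha)},\diag\hf^{(\alpha)})$ is real spherical. Lemma \ref{triple} then forces $\hf^{(\alpha)}=\so(1,n)$, while Proposition \ref{prop-simple} applied to each pairwise pair forces $\gf_i=\so(1,n_i)$ with all $n_i$ equal. Strict indecomposability eliminates any additional ideals of $\hf$, producing exactly the triple diagram of Lemma \ref{triple}. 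In the path case, $(\gf,\hf)$ is a gluing in the sense of (\ref{defi glue}) of two pairs from Theorem \ref{thm:semisimplespherical} along a common ideal $\hf^0$ of $\hf_2$. Lemma \ref{lemma stand1}(\ref{standard4}) applied to the coarse splitting $\gf_1\oplus(\gf_2\oplus\gf_3)$ demands that $(H_1\times H_{23})/H$ be spherical for $\Str(G_1,H_1)\times\Str(G_{23},H_{23})$; using (\ref{l descent}) to compute both structural algebras from the diagrams of Theorem \ref{thm:semisimplespherical} and (\ref{formula L}) to read off $\str(\gf,\hf)$, the candidate common ideals collapse to the $\sp(1,\K)$-factors arising in the symplectic series (SS\ref{case4.10}), (SS\ref{case4.27}), (SS\ref{case4.14}). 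Assembling the resulting gluings produces precisely the diagrams (BM\ref{case5.1}) and (BM\ref{case5.2}), with the field constraints in (1) and (2) forced by the requirement that the diagonal embedding of the common $\sp(1,\K''')$-ideal be defined over the shared subfield.

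The main obstacle is the exhaustive bookkeeping in the path case: for each entry of the two-factor list one must determine whether one of its $\hf$-factors can simultaneously occur in another entry as a projection, and if so whether the induced triple is real spherical. Most combinations are excluded either by Onishchik's factorization theorem (Proposition \ref{Oni2}) applied to the decomposition $\hf^0=\sf_1''+\sf_2''$ from Section \ref{dec L_i}, or by the dimension bound (\ref{dim bd}); these exclusions leave only the symplectic gluings. Once the candidates are isolated, sufficiency and the stated structural algebras follow by iterating Lemma \ref{lemma stand1}(\ref{standard4}) and (\ref{formula L}), and the absolute-sphericality criterion $\K=\K'=\K''=\K'''$ follows by comparing with the Brion--Mikityuk complex classification entry by entry.
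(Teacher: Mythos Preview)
Your overall plan—assemble the $k=3$ pairs by gluing $k=2$ building blocks from Theorem~\ref{thm:semisimplespherical}—matches the paper, but your star/path dichotomy misfires. In (BM\ref{case5.2}) the factor $\sp(1,\K''')$ of $\hf$ projects nontrivially to \emph{all three} $\gf_i$, so that entire family lies in your star case, not your path case; yet your star-case argument concludes only with the triples of Lemma~\ref{triple}. The reason is twofold. First, Corollary~\ref{grossprasad_cond} concerns the equivalence of strong sphericality for $(\gf,\hf)$ and $(\hf,\str(\gf,\hf))$ and gives no mechanism for stripping $(\gf_1\oplus\gf_2\oplus\gf_3,\hf)$ down to a pure triple. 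Second, your appeal to Proposition~\ref{prop-simple} on the pairwise projections presupposes that $\hf_{ij}$ is simple, but $\hf_{ij}$ is the projection of all of $\hf$ and in (BM\ref{case5.2}) carries extra $\sp$-factors. Your assertion that ``strict indecomposability eliminates any additional ideals'' is likewise false: strict indecomposability only requires the projection graph of $\hf_{\rm n}$ to be connected, which is automatic once one ideal already hits all three $\gf_i$.

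The paper avoids this with a different split. After reordering so that $Z_{12}$ and $Z_{23}$ are strictly indecomposable, it first treats the case $\gf_3=\hf_3$: writing $\hf_{12}=\hf'_{12}\oplus\hf^0$ with $\hf^0\simeq\gf_3$, Corollary~\ref{tower-corollary} reduces the question to whether $(\hf^0,\sf^0_{12})$ is strongly spherical, where $\sf^0_{12}$ is the projection of $\str(\gf_{12},\hf_{12})$ to $\hf^0$. Inspecting the $k=2$ structural algebras then forces either the group case (S\ref{case4.1}) with $\gf_1=\gf_2=\hf^0$ (whence Lemma~\ref{triple} gives $\so(1,n)$), or $\hf^0=\sp(1,\K)$ with $\sf^0_{12}=\gl(1,\K)$ coming from (SS\ref{case4.10}) or (SS\ref{case4.27}); this already produces the $m=1$ instances of both (BM\ref{case5.1}) and (BM\ref{case5.2}). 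The case $\hf_3\subsetneq\gf_3$ is then bootstrapped: Lemma~\ref{lemma stand2}(\ref{reduction condition}) forces $(\gf_{12}\oplus\hf^{0,+},\hf_{12})$ to lie among the cases just found, which immediately pins down $(\gf_3,\hf_3)$ and the remaining gluings. This two-step bootstrap is what tames the exhaustive search you anticipate in your path case.
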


\begin{proof}  
Let $(\gf,\hf)$ be a 
strictly indecomposable spherical pair with $k=3$.
Without loss of generality we may assume that $Z_{12}$ and 
$Z_{23}$ are strictly indecomposable. Hence both show up in the list of Theorem  
\ref{thm:semisimplespherical}.  

We begin with the case where $\gf_3=\hf_3$,
and write $\hf_{12}=\hf_{12}'\oplus \hf^0$ with
$\hf^0\simeq \hf_3$. Let 
$\sf_{12}^0$ be the projection of $\str(\gf_{12},\hf_{12})$ to 
$\hf^0$. Then a  necessary and sufficient 
condition for $Z$ to be real spherical is that
$(\hf^0,\sf^0_{12})$ is strongly real spherical.
By inspecting the lists in Theorem \ref{thm:semisimplespherical} we see that this can only happen if either 
$\gf_1=\gf_2=\hf^0$ (case (S\ref{case4.1})) or 
$\hf^0=\sp(1,\K)$ with $\sf_{12}^0=\gl(1,\K)$
(cases (SS\ref{case4.10}), (SS\ref{case4.27})).
The first case was treated in Lemma \ref{triple},
and leads to $\gf_1=\gf_2=\gf_3=\so(n,1)$, $n\ge 2$. 
The other cases can be combined 
in the following diagram
\begin{equation}\label{BigM}
\repMiviw{\sp(n,\K)}{\sp(k,\K')}
{\sp(n-1,\K)}{\sp(1,\K'')}{\sp(k-1,\K')}
{\sp(n-2,\K)}{\gl(1,\K'')}{\sp(k-2,\K')}
\end{equation}
with $n,k\geq 1$ and $\K,\K',\K''=\R,\C$ with 
$\K''\subset\K, \K'$. If $n=1$, resp. $k=1$, we require
in addition that $\K''=\K$, resp. $\K''=\K'$. 
Note that with $n=k=1$ also to the first two cases of
Lemma \ref{triple} are included.

There are two possible choices of $\hf^0$ in $\hf_{12}$. 
It can be the middle factor $\sp(1,\K'')$, or it can be one 
of the others when $n$ or $k$ is $2$, 
say $k=2$ and $\hf^0=\sp(1,\K')$. The first choice
leads to the cases $m=1$ of (BM\ref{case5.2}).
For the second choice we need $(\sp(1,\K'),\gl(1,\K''))$
to be strongly spherical, and hence $\K'=\K''$.
This then leads to the cases $m=1$ of (BM\ref{case5.1}).

Let us now consider the case where $\hf_3\subsetneq \gf_3$. Since $Z$
is strictly indecomposable, there exists a non-compact
simple common factor $\hf^{0,+}$ of $\hf_{12}$ and $\hf_3$.
According to 
Lemma \ref{lemma stand2}(2) and Lemma \ref{lemma stand1},  
the pair $(\gf_{12}\oplus \hf^{0,+}, \hf_{12})$
is then real spherical, hence one of the pairs
we just determined. In particular $Z_{12}$ is either
a group case of $\so(n,1)$, or it is of type (\ref{BigM}). 
Moreover $\hf^{0,+}$ is either
$\so(n,1)$ for some $n$,  or it is $\sp(1,\R)$, $\sp(1,\C)$.

By switching the roles of $Z_{12}$ and $Z_{23}$ 
we see that the same limitations  apply to $Z_{23}$. In particular,
if $Z_{12}$ is a group case with $\gf_1=\gf_2=\so(n,1)$ for $n\ge 4$, 
then $Z_{23}$ has to be of the same type with
$\gf_3=\gf_2=\so(n,1)$, and we are in the case of Lemma
\ref{triple}. Excluding that case we infer
that $Z_{12}$ is of type (\ref{BigM}) and that for some $m>1$
\begin{equation}\label{g3h3}
(\gf_3,\hf_3)= (\sp(m,\K'''), \sp(1,\K'''') \oplus\sp(m-1,\K'''))
\end{equation}
with $\K''''\subset \K'''$.
Moreover $\str(\gf_3,\hf_3)=\sp(1,\K'''')\oplus\sp(m-2,\K''')$.

If $\hf^0=\hf^{0,+}$ is simple the glueing between $Z_3$ and $Z_{12}$ 
takes place along the factor
$\sp(1,\K'''')$ of $\hf_3$, and as before there are two choices
in $\hf_{12}$. These two choices then lead to the remaining cases
of (BM\ref{case5.2}) and (BM\ref{case5.1}).
Finally, the possibility for $n=k=m=2$ that $Z_3$ is glued to
$Z_{12}$ along both factors of $\hf_3$ is easily excluded by
dimension count.
\end{proof}

\begin{prop}\label{four exclusion}  {\rm (Exclusion of four and more factors)}  Let $Z$ be a strictly indecomposable real spherical space
attached to $G=G_1\times \ldots\times G_k$. Then $k\le 3$.
\end{prop}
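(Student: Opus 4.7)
The strategy is proof by contradiction: assume $(\gf,\hf)$ is strictly indecomposable real spherical with $k \geq 4$ simple non-compact factors. Iterated application of Lemma \ref{lemma stand1}(1) shows that every projection $(\gf_I,\hf_I)$, $I \subset \{1,\ldots,k\}$, is real spherical; in particular every triple $Z_{ijl}$ is real spherical, and upon discarding factors on which $\hf$ projects onto the whole of $\gf_\bullet$ (which split off by Lemma \ref{lemma stand2}(1)), it must appear in Theorem \ref{thm:semisimplespherical} or Theorem \ref{Thm k=3}, or be properly decomposable. Write $\hf_n = \bigoplus_r \hf^{0,r}$ as a sum of simple non-compact ideals, and attach to each summand its support $J_r := \{i : p_i(\hf^{0,r}) \neq 0\} \subset \{1,\ldots,k\}$. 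Strict indecomposability of $(\gf,\hf)$ is precisely the statement that the hypergraph with hyperedges $\{J_r\}$ is connected on $\{1,\ldots,k\}$.

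I would first establish $|J_r| \leq 3$ for every $r$. Indeed, if three indices lie in $J_r$, the induced triple contains $\hf^{0,r}$ as a simple ideal projecting non-trivially to all three factors, so by Theorem \ref{Thm k=3} it must be of type (BM\ref{case5.0}) or (BM\ref{case5.2}); in either case the dimension bound \eqref{dim bd} applied to the corresponding four-fold diagonal embedding (of $\so(1,n)$ in four copies of itself, resp.\ of $\sp(1,\K)$ in four symplectic factors) rules out $|J_r| \geq 4$. If moreover some $|J_r| = 3$ occurs, then connectivity requires an additional ideal $\hf^{0,s}$ whose support $J_s$ contains an index $i_4 \notin J_r$; the triple consisting of $i_4$ together with two indices from $J_r$ inherits two distinct hyperedges, and by Theorem \ref{Thm k=3} such a triple is forced to be of type (BM\ref{case5.1}). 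This contradicts the shape of $\hf^{0,r}$ coming from (BM\ref{case5.0}) (where $\hf^{0,r} = \so(1,n)$) or (BM\ref{case5.2}) (where $\hf^{0,r} = \sp(1,\K)$ is diagonal in three symplectic factors, not two).

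Hence $|J_r| \leq 2$ for every $r$; hyperedges of size $1$ contribute nothing to connectivity, so the graph $\Gamma$ whose edges are the pairs $J_r$ with $|J_r| = 2$ must be connected on $\{1,\ldots,k\}$. For $k \geq 4$, $\Gamma$ has at least three edges and therefore contains either a length-$3$ path $i_1$--$i_2$--$i_3$--$i_4$ or a vertex of degree $\geq 3$. In either subcase some triple inherits two distinct edges of $\Gamma$ and must, by Theorem \ref{Thm k=3}, be of type (BM\ref{case5.1}) (the unique triple with two distinct size-$2$ ideals): the middle factor is $\sp(2,\K'')$, and each edge ideal sits inside it as a copy of $\sp(1,\K'')$.

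The hard part will be combining these rigid triple shapes into a global contradiction. In the path case both $Z_{i_1 i_2 i_3}$ and $Z_{i_2 i_3 i_4}$ are of type (BM\ref{case5.1}), forcing both $\gf_{i_2}$ and $\gf_{i_3}$ to be the hub $\sp(2,\K'')$; writing out their structural algebras via (\ref{formula L}) and assembling them for the full quadruple through Corollary \ref{tower-corollary}, the factorization condition of Lemma \ref{lemma stand1}(2) fails, and a direct check through the dimension bound \eqref{dim bd} handles the remaining parameter choices. In the degree-$\geq 3$ case, a single hub $\gf_i = \sp(2,\K'')$ would have to contain three distinct $\sp(1,\K'')$-subalgebras as projections of three independent ideals of $\hf_n$; but $\sp(2,\K'')$ has rank $2$ and admits (up to conjugacy) only two commuting $\sp(1,\K'')$-ideals, precluding three such independent edge ideals. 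This exhausts all subcases and contradicts $k \geq 4$.
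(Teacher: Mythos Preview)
Your hypergraph framework is a reasonable way to organise the argument, and the paper's proof is in fact built on the same graph-theoretic observation (a connected graph on four vertices contains a path of length three). However, the step where you reduce to $|J_r|\le 2$ has a genuine gap. When $\hf^{0,r}$ arises from a triple of type (BM\ref{case5.2}) you have $\hf^{0,r}\simeq\sp(1,\K''')$; in the auxiliary triple $\{i_4,j_1,j_2\}$ with $j_1,j_2\in J_r$, the projection of $\hf^{0,r}$ has support $\{j_1,j_2\}$ and is still isomorphic to $\sp(1,\K''')$. This is \emph{exactly} the shape of an edge ideal in (BM\ref{case5.1}), so the contradiction you claim (``diagonal in three symplectic factors, not two'') does not materialise. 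The same failure occurs for (BM\ref{case5.0}) when $n\in\{2,3\}$, since then $\so(1,n)\simeq\sp(1,\K)$. There is also a smaller gap: you need the auxiliary triple to be strictly indecomposable, which requires $J_s$ to meet $\{j_1,j_2\}$, not just to contain $i_4$; connectivity alone does not guarantee this for the particular $J_s$ you pick.

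The paper avoids the case analysis entirely by using the structural algebras already computed in Theorem~\ref{Thm k=3}. After reordering so that $Z_{123}$ and $Z_{234}$ are strictly indecomposable (and ruling out the triple case (BM\ref{case5.0}) by a direct sphericality check), one knows $Z_{123}$ is (BM\ref{case5.1}) or (BM\ref{case5.2}) and reads off from the third row of those diagrams that $\str(\gf_{123},\hf_{123})$ projects to \emph{zero} on every $\sp(1,\cdot)$ ideal of $\hf_{123}$. Since the glue $\hf^0$ between $Z_{123}$ and $Z_4$ must be such an ideal, Corollary~\ref{tower-corollary} forces $(\hf^0,0)$ to be real spherical, which fails because $\hf^0$ is non-compact. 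This single observation replaces both your $|J_r|\le 2$ reduction and your path/degree-$3$ case split; the information you are trying to extract by rank-counting in $\sp(2,\K'')$ and by dimension bounds is already encoded in those zero entries of the structural algebra.
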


\begin{proof}
It is sufficient to consider the case of $k=4$.  We argue by contradiction and assume that there exists a strictly indecomposable
real spherical space $Z$ of this length.

We observe that after reordering we may assume that $Z_{123}$ and 
$Z_{234}$
are  strictly indecomposable (this follows for example
from the simple fact that every connected graph with 4 vertices contains
a path of length 3). 

It follows in particular that both $Z_{123}$ and $Z_{234}$ has to be one of the spaces
listed in Theorem \ref{Thm k=3}.  We claim that it cannot be the triple space. Indeed, if $Z_{123}$ were the triple space with $\gf_i=\so(1,n)$ for $i=1,2,3$, then 
$Z_{123}$ and $Z_4$ would be glued together along 
$\hf^0\simeq \so(1,n)$ and $\hf^0$ would be spherical for the 
action of $\hf^0 \oplus [\lf\cap\hf]_{123}$.  Now $[\lf\cap\hf]_{123}=\so(n-2)$ is compact and that would mean 
that $\so(n-2)$ is a spherical subalgebra of $\hf^0=\so(1,n)$ which is not the case by the classification 
of  Part I (note that the spherical 
subalgebra $\su(4)$ of $\so(1,8)$ is isomorphic to $\so(6)$, but the subalgebras are not 
conjugate within $\so(1,8)$).

\par Hence $Z_{123}$ has to be one of the cases (BM\ref{case5.1}) or (BM\ref{case5.2}). By symmetry this 
holds for $Z_{234}$ as well. In particular we have 
$(\gf_4,\hf_4)=(\sp(s,\tilde \K), \sp(1,\tilde \K') \oplus\sp(s-1,\tilde \K))$
with $\tilde \K,\tilde\K'=\R,\C$, $\tilde\K'\subset \tilde\K$ and $s\geq 1$. 
Then $Z_{123}$ and $Z_4$  are  glued together along $\hf^0=\sp(1,\tilde \K')$ or $\hf^0=\sp(1,\K)$ if $s=2$. 
In order for the resulting space $Z=Z_{1234}$ to be real spherical this would require that the projection 
of $[\lf\cap\hf]_{123}$ to $\hf^0$ is real spherical. 
But this projection is $0$ by Theorem \ref{Thm k=3} and  we obtain a contradiction.
\end{proof}

\section{Indecomposability versus strict indecomposability}\label{Section not strictly}

Let $\gf$ be a real semisimple Lie algebra and $\hf$ an algebraic reductive subalgebra.  
The goal of this section is to describe how one can determine 
whether an indecomposable, but not strictly indecomposable, pair $(\gf, \hf)$ is real spherical.

\begin{lemma} Let $(\gf, \hf)$ be an indecomposable  real spherical pair which 
not strictly indecomposable. Then there is a splitting $\gf=\gf_1 \oplus\ldots\oplus\gf_k$
into ideals such that with $\hf_i$ the projection of $\hf$ to $\gf_i$ one has: 
\begin{enumerate}
\item $\hf_{\rm n}=(\hf_1)_{\rm n} \oplus \ldots \oplus (\hf_k)_{\rm n}$.  
\item $(\gf_i, \hf_i)$
is a strictly indecomposable real spherical pair for all $1\leq i\leq k$.
\end{enumerate} 
 In particular, if $\gf$ is semisimple without compact factors, then each $(\gf_i, \hf_i)$  appears either in Theorem 
\ref{thm:semisimplespherical} or Theorem \ref{Thm k=3}. 
\end{lemma}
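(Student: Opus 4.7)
The plan is to first produce the required splitting of $\gf$ at the level of $\hf_{\rm n}$ using the hypothesis that $(\gf,\hf)$ is not strictly indecomposable, then verify that the projections $\hf_i:=p_i(\hf)$ satisfy $(\hf_i)_{\rm n}=\hf_{\rm n}\cap\gf_i$, and finally descend real sphericality to each factor by repeated application of Lemma \ref{lemma stand1}. The \emph{in particular} sentence will then follow by reading off the classifications already proved.

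First I would write $\hf_{\rm n}=\mathfrak{s}_1\oplus\ldots\oplus\mathfrak{s}_m$ as a sum of simple non-compact ideals. Since $(\gf,\hf_{\rm n})$ is decomposable by hypothesis, there is a non-trivial ideal splitting $\gf=\gf'\oplus\gf''$ with $\hf_{\rm n}=(\hf_{\rm n}\cap\gf')\oplus(\hf_{\rm n}\cap\gf'')$. Each $\mathfrak{s}_j$ must then lie entirely in $\gf'$ or entirely in $\gf''$, since a diagonal embedding would force trivial intersection with both summands. Passing to the finest ideal splitting of $\gf$ compatible with $\hf_{\rm n}$ (equivalently, the connected components of the graph on the simple ideals of $\gf$ whose edges record which pairs are joined by the embedding of some $\mathfrak{s}_j$), I obtain a decomposition $\gf=\gf_1\oplus\ldots\oplus\gf_k$ with $\hf_{\rm n}=\bigoplus_i(\hf_{\rm n}\cap\gf_i)$ such that each pair $(\gf_i,\hf_{\rm n}\cap\gf_i)$ is indecomposable.

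Next I would set $\hf_i:=p_i(\hf)$ and verify (1). The projection $p_i(\hf_{\rm el})$ is a quotient of the elementary algebra $\hf_{\rm el}$, hence elementary, while $p_i(\hf_{\rm n})=\bigoplus_{\mathfrak{s}_j\subset\gf_i}\mathfrak{s}_j=\hf_{\rm n}\cap\gf_i$. Their sum inside $\hf_i$ is direct, because a non-compact semisimple subalgebra meets an elementary one trivially. Hence $(\hf_i)_{\rm n}=\hf_{\rm n}\cap\gf_i$, which gives (1) and translates strict indecomposability of $(\gf_i,\hf_i)$ into indecomposability of $(\gf_i,\hf_{\rm n}\cap\gf_i)$, arranged in the previous step. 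Real sphericality of each $(\gf_i,\hf_i)$ then follows by iterated application of the forward implication of Lemma \ref{lemma stand1}, giving (2).

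For the closing assertion, if $\gf$ has no compact factors then neither do the $\gf_i$; by Proposition \ref{four exclusion} each $\gf_i$ has at most three simple factors, and each strictly indecomposable real spherical pair $(\gf_i,\hf_i)$ is consequently covered by Theorem \ref{thm:semisimplespherical} (together with the one-factor case already classified in Part I) or by Theorem \ref{Thm k=3}. I do not foresee a genuine obstacle in this argument. The two small points that need care are the observation that a simple ideal of $\hf_{\rm n}$ cannot embed diagonally across a decomposition of $\gf$ which is compatible with $\hf_{\rm n}$, and the fact that the elementary property is preserved under projection; once these are in hand the construction is a clean refinement-of-partitions bookkeeping argument.
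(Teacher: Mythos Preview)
Your proof is correct and follows essentially the same approach as the paper: take the finest ideal decomposition of $\gf$ compatible with $\hf_{\rm n}$ (the paper phrases this as ``assume $k$ is maximal''), then read off real sphericality of the projections from Lemma~\ref{lemma stand1}. Your write-up is considerably more detailed than the paper's three-sentence argument; in particular, your explicit verification that $(\hf_i)_{\rm n}=\hf_{\rm n}\cap\gf_i$ and your parenthetical remark that the one-factor case is covered by Part~I are points the paper leaves implicit.
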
 

\begin{proof} By definition, if $(\gf,\hf)$ is not strictly indecomposbale, there exists 
a splitting of $\gf$ and $\hf_{\rm n}$ as indicated.  Moreover, as $G/H$ projects onto 
$G/H_i$, it follows that each $(\gf_i,\hf_i)$ is real spherical (see also Lemma \ref{lemma stand1}). 
We assume that $k$ is maximal and then each $(\gf_i, \hf_i)$ is strictly indecomposable. 
\end{proof}

\par For our objective to describe all indecomposable real spherical pairs $(\gf,\hf)$ which are not strictly 
indecomposable, we may thus assume that $\gf=\gf_1\oplus\ldots\oplus\gf_k$ 
with $\hf_{\rm n} = (\hf_1)_{\rm n}\oplus \ldots \oplus (\hf_k)_{\rm n}$ such that each 
$(\gf_i,\hf_i)$ be a strictly indecomposable real spherical pair. 

Let $\hf':=\hf_1 \oplus\ldots\oplus \hf_k$ and note that $\sf(\gf,\hf')= \sf(\gf_1,\hf_1)\oplus
\ldots \oplus \sf(\gf_k,\hf_k)$.  We denote by $\cf_i$ the projection 
of $\sf(\gf_i,\hf_i)$ to $(\hf_i)_{\rm el}$.  Since $(\hf_{\rm n})'=\hf_{\rm n}$ we thus obtain from 
Proposition \ref {tower-factor} applied to the tower $\gf\supset \hf'\supset \hf$ the following criterion: 

\begin{prop}\label{referees suggestion}
{\rm (Criterion for sphericality)} In the setup described above, $(\gf,\hf)$ is real spherical if and only if 
$$(\hf_1)_{\rm el}\oplus\ldots\oplus (\hf_k)_{\rm el}= \hf_{\rm el} + \cf_1 +\ldots + \cf_k$$
\end{prop}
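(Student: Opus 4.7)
The plan is to apply Proposition \ref{tower-factor} to the tower $\hf\subset\hf'\subset\gf$. Since each $(\gf_i,\hf_i)$ is real spherical, so is $(\gf,\hf')$, with structural algebra
$$\sf(\gf,\hf')=\sf_1\oplus\cdots\oplus\sf_k,\qquad \sf_i:=\sf(\gf_i,\hf_i).$$
Thus $(\gf,\hf)$ is real spherical if and only if $\hf'/\hf$ is real spherical for $S':=\Str(G,H')$. Translated to the Lie algebra level, as in the remark following Lemma \ref{lemma stand1}, this asks for the existence of a minimal parabolic subalgebra $\pf'\subseteq\sf(\gf,\hf')$ such that $\hf'=\pf'+\hf$. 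Because a minimal parabolic of a direct sum of reductive Lie algebras is the direct sum of minimal parabolics of the summands, $\pf'=\pf_1\oplus\cdots\oplus\pf_k$ with $\pf_i$ a minimal parabolic of $\sf_i$.

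Next I would decompose $\sf_i=(\sf_i)_{\rm n}\oplus(\sf_i)_{\rm el}$ and observe that the minimal parabolic of an elementary reductive algebra is the whole algebra (its derived subalgebra is compact, so there is no $\R$-split part outside the center and the nilpotent radical vanishes). Consequently $\pf_i=\pf_i^{\rm n}+(\sf_i)_{\rm el}$ with $\pf_i^{\rm n}$ a minimal parabolic of $(\sf_i)_{\rm n}$. A non-compact simple Lie algebra cannot embed into an elementary reductive Lie algebra, so the non-compact simple ideals comprising $(\sf_i)_{\rm n}$ all land inside $(\hf_i)_{\rm n}$, giving $(\sf_i)_{\rm n}\subseteq(\hf_i)_{\rm n}\subseteq\hf_{\rm n}\subseteq\hf$. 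Hence each $\pf_i^{\rm n}$ is absorbed into $\hf$ and the sphericality condition simplifies to
$$\hf'=(\sf_1)_{\rm el}+\cdots+(\sf_k)_{\rm el}+\hf.$$

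Finally I would pass to the quotient by $\hf_{\rm n}$. Let $p_{\rm el}$ denote the natural projection along $\hf_{\rm n}$; it identifies $\hf'/\hf_{\rm n}$ with $(\hf_1)_{\rm el}\oplus\cdots\oplus(\hf_k)_{\rm el}$, it sends each $(\sf_i)_{\rm el}$ to $\cf_i$ (indeed $\cf_i=p_{\rm el}(\sf_i)=p_{\rm el}((\sf_i)_{\rm el})$ since $(\sf_i)_{\rm n}\subseteq\hf_{\rm n}$), and it sends $\hf=\hf_{\rm n}\oplus\hf_{\rm el}$ onto the image of $\hf_{\rm el}$, which we again denote by $\hf_{\rm el}$. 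The displayed condition therefore rewrites as
$$(\hf_1)_{\rm el}\oplus\cdots\oplus(\hf_k)_{\rm el}=\hf_{\rm el}+\cf_1+\cdots+\cf_k,$$
which is the required criterion. The principal step---and the only genuine obstacle---is verifying that the non-elementary parts $\pf_i^{\rm n}$ are absorbed into $\hf$; once that is established the remainder is a routine quotient argument modulo $\hf_{\rm n}$.
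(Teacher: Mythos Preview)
Your proof is correct and follows exactly the approach the paper indicates: apply Proposition \ref{tower-factor} to the tower $\hf\subset\hf'\subset\gf$ and use that $(\hf_{\rm n})'=\hf_{\rm n}$. The paper leaves the details implicit, whereas you have spelled out carefully why the non-elementary part $(\sf_i)_{\rm n}$ of each structural algebra lands in $\hf_{\rm n}\subset\hf$ and why the quotient by $\hf_{\rm n}$ yields the stated criterion; your argument for $(\sf_i)_{\rm n}\subseteq(\hf_i)_{\rm n}$ via the impossibility of embedding a non-compact simple algebra into an elementary one is the right justification.
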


In the sequel we will use this criterion to analyse the situation further. 
We begin with the building blocks of $k=2$ and each $\gf_i$ non-compact simple.

\begin{prop}\label{IND} {\rm(Indecomposable pairs with two factors)}  \label{compact exceptions} Suppose that 
$\gf_1, \gf_2$ are both non-compact simple and set $\gf=\gf_1\oplus \gf_2$. 
Let $\hf\subset \gf$ be a reductive subalgebra such that $(\gf, \hf)$ is an indecomposable real 
spherical pair which is not strictly indecomposable.  Set $\hf^0:= \hf/  [(\gf_1\cap \hf)\oplus (\gf_2\cap \hf)]$.

Then either $\hf^0\simeq\R^l \oplus (i\R)^m \oplus \sp(1)^n$ for some $l,m,n\le 1$, or
$\hf^0$ is simple and $(\gf,\hf)$ is equivalent to one of the following pairs:

\renewcommand*{\pnlabel}[1]{{\rm (IND\refstepcounter{pairnumber}\thepairnumber\label{#1})}}

$$\begin{tabular}{c|c|c} \repMiii{\so(1,n+8)} {\so(k,m+7)}{\ \so(1, n)}{\ \spin(7)} 
{\ \so(k,m)} {\  \  \ \so(n)}{\!\su(4-k)}{\ \ \ \so(m)}{m,n\geq 0\atop k=1,2}{c1}
& \repMiii{\so(1,n+8)} {\so(1,m+8)}{\ \so(1, n)}{\ \so(8)} {\so(1,m)} {\quad\so(n)}
{\quad \sG_2}{\quad\so(m)}
{m,n\geq 0}{c5}
& \repMiii{\so(1,n+8)} {\so(k,m+8)}{\ \so(1, n)}{\ \so(8)} {\so(k,m)} {\quad\so(n)}
{\!\su(5-k)}{\quad\so(m)}
{m,n\geq 0\atop k=2,3}{c4}
\\ \hline
   \repMiiiLD{\so(1,n+8)} {\so(1,m+6)}{\ff + \so(1, n)\ \ }{\ \ \su(4)} {\ \so(1,m)} {\ \ff+ \so(n)}
   {\ \ \su(2)}{\ \ \ \so(m)}{m,n\geq 0 \atop  \ff\subset \uf(1)}{c2}
&  \repMiiiRD{\so(1,n+6)} {\su(1,m+4)}{\ \so(1, n)\ \ }{\ \ \su(4)} {\su(1,m)+\ff } {\ \ \ \so(n)}
{\ \ \su(2)}{\ \uf(m)+\ff }{m,n\geq 0\atop 
\ff \subset \uf(1)}{c3}
\end{tabular}
$$

In diagram {\rm (IND\ref{c1})} the left diagonal embedding of $\spin(7) \simeq \so(7)$ is the 
spin embedding of $\spin(7)$ into 
$\so(8)$ whereas the right diagonal embedding is the standard embedding 
of $\so(7)$. In diagrams {\rm (IND\ref{c5})}-{\rm (IND\ref{c4})} the left 
diagonal embedding of $\so(8)$ 
into $\so(1,n+8)$ is the triality automorphism followed by the standard embedding of
$\so(8)$, and the right diagonal embedding is just the standard embedding 
of $\so(8)$. 

In diagrams {\rm (IND\ref{c2})} and {\rm (IND\ref{c3})} 
we employ the standard isomorphism $\su(4)\simeq \so(6)$. Moreover, 
the lower diagonal 
lines refer to morphisms which are only non-trivial on $\ff$.

\end{prop}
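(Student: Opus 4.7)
The plan is to apply the sphericality criterion of Proposition~\ref{referees suggestion} to reduce to a Lie algebra factorization problem, and then invoke Onishchik's theorem (Proposition~\ref{Oni2}) together with the list of structural algebras implicit in Theorem~\ref{thm:semisimplespherical} and Appendix~\ref{appendix}.

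Set $\hf_i' := \hf \cap \gf_i$. Since $(\gf, \hf_{\mathrm{n}})$ is decomposable by hypothesis, every non-elementary simple ideal of $\hf$ lies in $\hf_1' \oplus \hf_2'$, so $\hf^0$ is a quotient of $\hf_{\mathrm{el}}$ and is itself elementary; write $\hf^0 = Z \oplus K$ with $Z$ abelian and $K$ compact semisimple. Indecomposability of $(\gf, \hf)$ forces $\hf^0 \neq 0$. For each $i$ let $\pi_i \colon \hf_i \to \hf_i/\hf_i' \simeq \hf^0$ be the quotient map, and set $\sigma_i := \pi_i(\cf_i)$, where $\cf_i$ denotes the projection of $\str(\gf_i, \hf_i)$ onto $(\hf_i)_{\mathrm{el}}$. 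Unwinding Proposition~\ref{referees suggestion} in this setting shows that $(\gf, \hf)$ is real spherical if and only if $\hf^0 = \sigma_1 + \sigma_2$, a factorization of $\hf^0$ by reductive subalgebras which restricts to a factorization of each simple ideal of $K$.

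Apply Proposition~\ref{Oni2} to each simple ideal $K_j$ of $K$: either $K_j$ admits only the trivial factorization, or the complexified factorization $K_{j,\C} = (\sigma_{1,\C} \cap K_{j,\C}) + (\sigma_{2,\C} \cap K_{j,\C})$ appears in Table~\ref{Onishchik}. Further constraints come from the fact that each $\sigma_i \cap K_j$ must be realized inside the elementary part $\cf_i$ of some structural algebra from Theorem~\ref{thm:semisimplespherical}. A scan of the tables in Appendix~\ref{appendix} shows that the compact simple factors actually occurring in the $\cf_i$ are (up to parameters) $\so(N)$, $\su(N) \supset \so(6)$, and small $\sp(k)$ factors, coming from the rank-one families $\gf_i = \so(1, N+1)$, $\su(1, N+1)$, $\so(2, 2N)$, and from $\sp$-type pairs. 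Matching these against Onishchik's list leaves only the trivial $\sp(1)$ case and the $\so(8, \C)$-factorizations $\so(7, \C) + \spin(7, \C)$ and $\spin(7, \C)_+ + \spin(7, \C)_-$, together with their $\so(7, \C) = \so(6, \C) + \sG_2^\C$-analogue.

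The generic case, where every $K_j \simeq \sp(1)$, yields the first alternative: the tables show that the pieces of $\cf_i$ available for gluing contain at most one copy of $\sp(1)$, at most one $\gl(1, \R)$, and at most one $\uf(1)$ not already absorbed into $\hf_i'$, and indecomposability prevents either $\sigma_i$ from alone exhausting $\hf^0$, forcing $l, m, n \leq 1$. The exceptional case produces the five listed diagrams by pairing up which rank-one pair $(\gf_i, \hf_i)$ contributes each of $\so(7), \so(8), \spin(7)$ to $\cf_i$ and tracking the spin and triality twists dictated by Table~\ref{Onishchik}. The main obstacle will be verifying that in each exceptional diagram the two embeddings of $\spin(7)$ or $\so(8)$ into $\gf_1$ and $\gf_2$ are precisely the non-conjugate ones required by triality, rather than equivalent embeddings that would collapse under an outer automorphism of $\so(8)$; this amounts to combining the outer automorphism data of $\so(8)$ with the explicit structural algebra description in Appendix~\ref{appendix}.
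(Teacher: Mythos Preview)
Your approach is essentially the same as the paper's: reduce via Proposition~\ref{referees suggestion} to the factorization $\hf^0=\sigma_1+\sigma_2$, invoke Onishchik's list (Proposition~\ref{Oni2}) to constrain the simple compact factors of $\hf^0$, and then run through the structural-algebra tables in Appendix~\ref{appendix} to see which gluings actually occur. The paper's proof is phrased more tersely as ``bookkeeping followed by Onishchik,'' but the logic is identical.

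Two minor points where your write-up could be tightened. First, your list of Onishchik factorizations used is slightly incomplete: diagram (IND\ref{c1}) with $k=2$ needs $\so(7,\C)=\sG_2^\C+\so(5,\C)$, not only the $\so(6,\C)$ case, and similarly (IND\ref{c4}) uses the $\so(8,\C)=\so(7,\C)+\sl(4,\C)$ line for $k=3$. Second, your argument for the bound $l,m,n\le 1$ in the generic case is not quite right as stated: indecomposability says only that $\hf^0\neq 0$, it does not by itself prevent one $\sigma_i$ from equalling $\hf^0$. The bound really comes from inspecting the tables and observing that for $\gf_i$ simple, the elementary part of $\hf_i$ that is not already in $\hf_i'=\hf\cap\gf_i$ is at most one-dimensional abelian plus at most one $\sp(1)$; this is the ``bookkeeping'' the paper alludes to without spelling out.
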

\begin{proof} The proof is a matter of bookkeeping followed by a simple application 
of Onishchik's list of factorizations (see Proposition \ref{Oni2}). From Tables 
\ref{lcaph_class_symm}--\ref{lcaph_KKPS_class} one collects all pairs
$(\gf, \hf)$ for which $\hf$ contains a compact simple ideal $\hf^0$
that admits a non-trivial factorization according to Onishchik
(that is, $\hf^0=\su(2n)$ ($n\ge 2$), $\so(2n)$ ($n\ge 3$), or $\so(7)$).
For every such pair the projection $\cf^0$ of $\str(\gf,\hf)$ to $\hf^0$
is determined from the tables. Then for two such pairs $(\gf, \hf_1)$ and 
$(\gf, \hf_2)$ where $\hf_1$ and $\hf_2$ have a common simple compact factor, say 
$\hf^0$, one checks with Onishchik's list whether
$ \hf^0 = \cf^0_1 + \cf^0_2$. This happens precisely in the
cases listed in the proposition.
\end{proof}

\begin{cor}  Let $\gf$ be semi-simple without compact factors, and let
$(\gf, \hf)$ be an indecomposable real spherical reductive pair.
Suppose that $(\gf,\hf)$ is not strictly indecomposable, such that 
$\hf^0:= \hf/ [(\gf_1 \cap \hf)\oplus\ldots\oplus (\gf_k\cap \hf)]$ is 
not of the the type $\R^l \oplus (i\R)^m \oplus \sp(1)^n$ for any $l,m,n\ge 0$.  
Then $(\gf, \hf)$ is one of the pairs {\rm (IND\ref{c1})-(IND\ref{c3})}
listed in Proposition \ref{IND}.
\end{cor}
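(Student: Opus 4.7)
The plan is to reduce to the situation handled by Proposition \ref{IND}. Starting from the lemma at the beginning of Section \ref{Section not strictly}, we may decompose $\gf=\gf_1\oplus\dots\oplus\gf_k$ such that each $(\gf_i,\hf_i)$ is strictly indecomposable real spherical. Since $\hf_{\rm n}$ splits along this decomposition, the quotient $\hf^0$ is an elementary reductive algebra that encodes how the various $(\hf_i)_{\rm el}$ are glued together through the diagonal inclusion $\hf\hookrightarrow\hf_1\oplus\dots\oplus\hf_k$.

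First I would use the hypothesis to locate a ``large'' glued ideal. Since $\hf^0$ is not isomorphic to any $\R^l\oplus(i\R)^m\oplus\sp(1)^n$, it contains a simple compact ideal $\hf^{00}$ which is neither abelian nor isomorphic to $\sp(1)$. Let $I\subset\{1,\dots,k\}$ be the set of indices $i$ for which $\hf^{00}$ occurs as an ideal of $(\hf_i)_{\rm el}$; then $|I|\ge 2$. By the criterion of Proposition \ref{referees suggestion}, real sphericality forces
\[
\hf^{00}\;=\;\sum_{i\in I}\cf_i^{00},
\]
where $\cf_i^{00}$ is the projection to $\hf^{00}$ of the elementary part $\cf_i$ of the structural algebra $\str(\gf_i,\hf_i)$ (read off from Tables \ref{lcaph_class_symm}--\ref{lcaph_KKPS_class}). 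Since $\hf^{00}$ is simple, non-abelian and not $\sp(1)$, Onishchik's classification (Proposition \ref{Oni2}) restricts its admissible factorizations sharply.

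Next I would argue $|I|=2$ and classify. Any three-term factorization $\hf^{00}=\cf_{i_1}^{00}+\cf_{i_2}^{00}+\cf_{i_3}^{00}$ yields the two-term factorization $\hf^{00}=\cf_{i_1}^{00}+(\cf_{i_2}^{00}+\cf_{i_3}^{00})$; inspection of Onishchik's list (together with the known shape of each $\cf_i^{00}$ from the tables) shows that no such three-term factorization is achievable by the available structural projections coming from strictly indecomposable real spherical pairs $(\gf_i,\hf_i)$. Thus $|I|=2$, and the pairwise sphericality condition together with the tables produces precisely the list in Proposition \ref{IND}.

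Finally, I would reduce $k$ to $2$. Suppose $k\ge 3$; by indecomposability the glueing graph on $\{\gf_1,\dots,\gf_k\}$ (with an edge $i\sim j$ whenever $(\hf_i)_{\rm el}$ and $(\hf_j)_{\rm el}$ share a diagonally identified simple ideal) is connected. The hypothesis on $\hf^0$ guarantees that at least one edge corresponds to a non-$\sp(1)$ simple factor, and the pair $(\gf_i\oplus\gf_j,\cdot)$ at this edge must appear in Proposition \ref{IND}. Inspecting (IND\ref{c1})--(IND\ref{c3}), the structural algebras on both sides are completely absorbed by this single glueing: the only remaining compact simple ideals of the $\cf_i$'s are of type $\sp(1)$ or abelian, and the pieces $(\gf_i,\hf_i)$ occurring are unique for each diagram. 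Hence no further non-$\sp(1)$ simple glueing to a third factor is possible, while $\sp(1)$ or abelian glueings alone would contradict connectedness and the hypothesis on $\hf^0$ simultaneously. The main obstacle in the plan is this last bookkeeping step; it is handled by going through the five diagrams of Proposition \ref{IND} one by one and checking that the structural algebra leaves no ``room'' for attaching a further $\gf_{k+1}$ through a non-$\sp(1)$ simple ideal. This forces $k=2$, and $(\gf,\hf)$ is then one of (IND\ref{c1})--(IND\ref{c3}).
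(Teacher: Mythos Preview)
Your plan is essentially the same as the paper's, which is also terse and relies on Proposition~\ref{IND} together with a final appeal to Onishchik. The one substantive difference is your choice of decomposition: you work with the strictly indecomposable splitting $\gf=\gf_1\oplus\cdots\oplus\gf_k$ from the lemma, whereas the paper passes immediately to \emph{simple} ideals of $\gf$. This matters because Proposition~\ref{IND} is stated for $\gf_1,\gf_2$ simple, and Tables~\ref{lcaph_class_symm}--\ref{lcaph_KKPS_class} (which you invoke to read off $\cf_i^{00}$) list only pairs with simple $\gf$. Strictly indecomposable pieces need not be simple: for instance (SS\ref{case4.16}) with $\ef=\spin(7)$ gives a two-factor strictly indecomposable piece whose $(\hf_i)_{\rm el}$ contains $\spin(7)$. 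So your line ``read off from Tables~\ref{lcaph_class_symm}--\ref{lcaph_KKPS_class}'' has a gap that the paper's approach sidesteps by working with simple ideals from the outset.

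The paper's argument runs: the hypothesis on $\hf^0$ guarantees that some compact simple ideal $\hf^{00}$ (not of type $\sp(1)$) of $\hf$ projects nontrivially to at least two \emph{simple} ideals $\gf_1,\gf_2$ of $\gf$; the projection of $(\gf,\hf)$ to $\gf_1\oplus\gf_2$ is then real spherical, indecomposable, not strictly indecomposable, with $\hf^0_{12}$ not of type $\R^l\oplus(i\R)^m\oplus\sp(1)^n$, and so by Proposition~\ref{IND} it is one of (IND\ref{c1})--(IND\ref{c3}). For the exclusion of further simple factors the paper observes that in every diagram (IND\ref{c1})--(IND\ref{c3}) the structural algebra is elementary, and then a pairwise application of Onishchik (each pair $(\gf_i\oplus\gf_j,\hf_{ij})$ with $\hf^{00}$ diagonal must again lie in the list) rules out a third factor; for example, a triple glueing along $\spin(7)$ would force two of the $\cf_i^{00}$ to equal $\sG_2$, but $\spin(7)=\sG_2+\sG_2$ is not in Table~\ref{Onishchik}. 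This is precisely your ``bookkeeping step'', and you are right that it is where the work lies; the paper compresses it to a single sentence.
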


\begin{proof} By assumption, we first note that 
that there exists a pair of 
simple ideals of $\gf$, say $\gf_1$ and 
$\gf_2$, such that the projection of $(\gf,\hf)$ to 
$\gf_1\oplus\gf_2$ belongs to (IND\ref{c1})-(IND\ref{c3}).
Moreover $\str(\gf,\hf)$ is elementary in all cases (IND\ref{c1})-(IND\ref{c3}),
and the list of Onishchik (see Proposition \ref{Oni2}) readily excludes 
further possibilities. 
\end{proof}

 \subsection{Examples with arbitrarily many factors}
 
 In case $\cf_i=(\hf_i)_{\rm el}= \R^l \oplus (i\R)^m \oplus \sp(1)^n$ one can construct 
 a lot of examples of real spherical pairs with arbitrarily many factors, i.e. the bound of 
Proposition \ref{four exclusion} of $k\leq 3$ is not valid for general indecomposable pairs.

 This was first observed by Mikityuk \cite[page 545]{Mik}) who listed all complex spherical pairs 
 $(\gf,\hf)$ with $\gf$ simple  such that $\cf=\gl(1,\C)$ with $\hf= \cf \oplus [\hf,\hf]$. 
 This is for example the case for $(\gf, \hf)=(\so(2n,\C),\gl(n,\C))$
for $n\ge 5$ odd.  In case each $(\gf_i, \hf_i)$ is complex spherical with $\cf_i=\gl(1,\C)$ 
one obtains via 

$$ (\gf_1\oplus\ldots\oplus \gf_k,   ([\hf_1,\hf_1] \oplus \ldots\oplus [\hf_k,\hf_k]) + \diag \gl(1,\C))$$
a complex spherical pair with arbitrarily many factors. 

More interesting are perhaps the cases of strictly indecomposable real spherical pairs $(\gf, \hf)$
for which $\hf$ contains $\sp(1)$ as a factor such that $\hf=\sp(1)\oplus \tilde\hf$ as a direct sum of Lie algebras.

By inspecting our tables we see that this happens if and only if $(\gf,\hf)$ belongs to
the family of pairs 
\begin{equation}\label{first family}
(\so(n,1),\so(n-4q,1)\oplus\sp(q)\oplus\sp(1))
\end{equation}
(see Table \ref{lcaph_KKPS_class}),  
\begin{equation}\label{second family}
(\sp(p,q+1),\sp(p,q)\oplus\sp(1))
\end{equation}
(see Table \ref{lcaph_class_symm}) or the family (see (SS\ref{case4.20}))
\begin{equation}\label{third family}
(\sp(p+1,q) \oplus \sp(p,q), \sp(p,q)\oplus \sp(1))\, .
\end{equation}
Suppose that each $(\gf_i,\hf_i)$ is of the type (\ref{first family}) - (\ref{third family}) and 
decompose $\hf_i=\sp(1)\oplus \tilde \hf_i$. Then we obtain via 
$$ (\gf_1\oplus\ldots\oplus \gf_k,   (\tilde \hf_1 \oplus \ldots\oplus \tilde \hf_k) + \diag \sp(1))$$
real spherical pairs with arbitrarily many factors.

\appendix
\section{Tables for the structural algebra $\lf\cap \hf$}\label{appendix}
In this appendix we list the structural algebra $\str(\gf,\hf)=\lf\cap \hf$  
for all real spherical pairs $(\gf, \hf)$ with $\gf$ simple and $\hf$ reductive.
The recipe how this can be obtained is explained 
in  Part I,  Sect.~8.  
 We start with the symmetric cases (Berger's list \cite{Berger}), move on to 
the non-symmetric absolutely spherical cases obtained from Kr\"amer's list \cite{Kr} 
(see  Part I for the determination of all 
real forms in Kr\"amer's list),  and finally catalogue the remaining cases obtained in 
 Part I. 
 
\par In the tables below the following conventions hold:  $n = p + q = k + l$ with $p\leq q$ and $k\leq l$. Further $p=p_1 +p_2$, $q=q_1 + q_2$ and $k = p_1 + q_1$, $l= p_2+q_2$.  Notice that both $p\leq q$ and $k\leq l$ force $q_2\geq p_1$.
If $\hf\subset \gl(n,\C)$, then we use the notation $\sf[\hf]:=\hf\cap\sl(n,\C)$.

Tables \ref{lcaph_class_symm} - \ref{lcaph_Kraemer_sporadic} are separated by horizontal lines with each segment corresponding to the real forms
of a complex spherical space listed on top of the segment.

In most of the cases the embedding of $\lf\cap\hf$ into $\hf$ is unique
up to conjugation. The cases which cannot be decided just 
by the form of $\hf$ and $\lf\cap\hf$, and which are of relevance for 
this paper, are discussed in   Part I, Remark 8.2.

\subsection{The classical symmetric Lie algebras} [See also  \cite{baba}.]
$$
\tiny{{\arraycolsep=1pt}
\begin{array}{l l l l l}
 & \gf             & \hf                                                   & \str(\gf,\hf)  & \\  \hline
\zz & \sl(n,\C)    & \so(n,\C)                                             & 0           & \\
\zz & \sl(n,\R)    & \so(p,q)                                              & 0           & \\
\zz & \su(p,q)     & \begin{Cases}\so(p,q) \\ \so^*(2p),~p=q\end{Cases}    & \so(q-p)    & \\
\zz & \sl(n,\HH)   & \so^*(2n)                                             & (i\R)^n     & \\   \hline

\zz & \sl(2n,\C)   & \sp(n,\C)                                             & \sp(1,\C)^n & \\
\zz & \sl(2n,\R)   & \sp(n,\R)                                             & \sp(1,\R)^n & \\
\zz & \su(2p,2q)   & \begin{Cases}\sp(p,q)\\\sp(2p,\R),~p=q\end{Cases}     & \sl(2,\C)^p + \sp(q-p,0) & \\
\zz & \sl(n,\HH)   & \sp(p,q)                                              & \sp(1,0)^n  & \\ \hline

\zz & \sl(n,\C)    & \sf[\gl(p,\C)+\gl(q,\C)]                           &\sf[\gl(q-p,\C)+\C^p] & \\
\zz & \sl(n,\R)    & \begin{Cases}\sl(p,\R)+\sl(q,\R)+\R\\ \sl(p,\C) + i\R,~p=q \end{Cases} &\sf[\gl(q-p,\R)+\R^p] & \\
(11a) &  \su(p,q)  & \sf[\uf(p_1,q_1)+\uf(p_2,q_2)]                        & \begin{Cases} \sf[\uf(p_2-q_1,q_2-p_1)+(i\R)^{p_1+q_1}] \\\sf[\uf(q_1-p_2)+\uf(q_2-p_1)+(i\R)^{p_1+p_2}]\end{Cases} & \begin{array}{l}p_2\geq q_1\\p_2\leq q_1 \end{array} \\
(11b) &  \su(p,p)  & \sl(p,\C)+\R                                          &  (i\R)^{p-1} & \\
\stepcounter{zeile}

(12a) & \sl(n,\HH) & \sl(p,\HH)+\sl(q,\HH)+\R                              &\sf[\gl(q-p,\HH)+\gl(1,\HH)^p] & \\
(12b) & \sl(n,\HH) & \sl(n,\C)+i\R                                         & \begin{Cases}\sf[\gl(1,\HH)^{\frac{n}{2}}]\\ \sf[\gl(1,\HH)^{[\frac{n}{2}]}]+\uf(1)\end{Cases} & \begin{array}{l}n\text{ even} \\n\text{ odd} \end{array} \\ \hline
\stepcounter{zeile}

\zz & \so(2n,\C)    & \gl(n,\C) & \begin{Cases}\sl(2,\C)^{\frac n2}\\\sl(2,\C)^{\frac {n-1}2}+\C\end{Cases} & \begin{array}{l}n\text{ even} \\n\text{ odd} \end{array} \\
(14a) & \so(p,q)    & \uf(\frac{p}{2},\frac{q}{2})                        & \uf(\frac{q-p}{2})+\sl(2,\R)^{\frac{p}{2}} & \\
(14b) & \so(p,p)    & \gl(p,\R)                                           & \begin{Cases}\sl(2,\R)^{\frac{p}2}\\\sl(2,\R)^{\frac{p-1}2}+\R\end{Cases} & \begin{array}{l}p\text{ even} \\p\text{ odd} \end{array}  \\ \stepcounter{zeile}
(15a) & \so^*(2n)   & \uf(p,q)                                            & \begin{Cases}\sl(1,\HH)^{\frac n2}\\
\sl(1,\Hb)^{{\frac n2} -1} + \su(1,1) +i\R \\\sl(1,\HH)^{\frac{n-1}2}+i\R\end{Cases}&\begin{array}{l}n\text{ even},\  p \text{ even} \\ n\text{ even}, \ p \text{ odd} \\ n\text{ odd} \end{array} \\ 
(15b) & \so^*(2n)   & \gl(\frac{n}{2},\HH)               & \sl(1,\HH)^{\frac n2} & \,\,\, n\text{ even} \\ \hline\stepcounter{zeile}

\zz   & \so(n,\C)  & \so(p,\C)+\so(q,\C)                                  & \so(q-p,\C) & \\
(17a) & \so(p,q)   & \so(p_1,q_1)+\so(p_2,q_2) & \begin{Cases}\so(p_2-q_1,q_2-p_1)\\\so(q_1-p_2)+\so(q_2-p_1)\end{Cases} & \begin{array}{l}p_2\geq q_1\\p_2\leq q_1 \end{array} \\
(17b) & \so(p,p)   & \so(p,\C)                                            & 0 & \\
\stepcounter{zeile}

(18a) & \so^*(2n)  & \so^*(2p)+\so^*(2q)                                  & \so^*(2q-2p)+(i\R)^p & \\
(18b) & \so^*(2n)  & \so(n,\C)                                            & (i\R)^{[\frac{n}{2}]} & \\ \hline\stepcounter{zeile}

\zz & \sp(n,\C)    & \gl(n,\C)                                            & 0 & \\
\zz & \sp(n,\R)    & \begin{Cases}\uf(p,q)\\\gl(n,\R)\end{Cases}          & 0 & \\
\zz & \sp(p,q)     & \begin{Cases}\uf(p,q)\\\gl(p,\HH),~p=q \end{Cases}   & \uf(q-p)+(i\R)^p & \\   \hline
 (\ref{r0})
\zz & \sp(n,\C)    & \sp(p,\C)+\sp(q,\C)                                  & \sp(q-p,\C)+\sl(2,\C)^p & \\
\zz & \sp(n,\R)    & \begin{Cases}\sp(p,\R)+\sp(q,\R)\\\sp(p,\C),~p=q\end{Cases} & \sp(q-p,\R)+\sl(2,\R)^p & \\
(24a) & \sp(p,q)   & \sp(p_1,q_1)+\sp(p_2,q_2)                            & \begin{Cases}\sp(p_2-q_1,q_2-p_1)+\sp(1)^{p_1+q_1}\\\sp(q_1-p_2)+\sp(q_2-p_1)+\sp(1)^{p_1+p_2}\end{Cases} & \begin{array}{l}  p_2\geq q_1\\p_2\leq q_1 \end{array} \\
(24b) & \sp(p,p)   & \sp(p,\C)                                            & \sp(1)^p & \\ \hline
\end{array}}$$
\centerline{\Tabelle{lcaph_class_symm}}

\subsection{The exceptional symmetric Lie algebras}

$$\tiny{
\begin{array}{l c r}
\begin{array}{l l l}
\gf      & \hf         & \str(\gf,\hf) \\ \hline
\sE_6^\C & \sp(4,\C)   & 0\\
\sE_6^1  & \begin{Cases}\sp(4,\R)\\\sp(4)\\\sp(2,2)\end{Cases} & 0\\
\sE_6^2  & \begin{Cases}\sp(4,\R)\\\sp(1,3)\end{Cases} & 0\\
\sE_6^3  & \sp(2,2)    & \so(4)\\
\sE_6^4  & \sp(1,3)    & \so(4)+\so(4)\\  \hline

\sE_6^\C & \sl(6,\C)+\sl(2,\C) & \C^2\subseteq\sl(6,\C)\\
\sE_6^1  & \begin{Cases}\sl(6,\R)+\sl(2,\R)\\\sl(3,\HH)+\su(2)\end{Cases} & \R^2\\
\sE_6^2  & \begin{Cases}\su(6)+\su(2)\\\su(2,4)+\su(2)\\\su(3,3)+\sl(2,\R)\end{Cases} & (i\R)^2\\
\sE_6^3  & \begin{Cases}\su(2,4)+\su(2)\\\su(1,5)+\sl(2,\R)\end{Cases} & \uf(2)+\uf(2)\\
\sE_6^4  & \sl(3,\HH)+\su(2) & \so(5)+\so(3)+\R\\  \hline

\sE_6^\C & \so(10,\C)+\C & \sl(4,\C)+\C\\
\sE_6^1  & \so(5,5)+\R   & \sl(4,\R)+\R\\
\sE_6^2  & \begin{Cases}\so(4,6)+i\R\\\so^*(10)+i\R\end{Cases} & \uf(2,2)\\
\sE_6^3  & \begin{Cases}\so(10)+i\R\\\so(2,8)+i\R\\\so^*(10)+i\R\end{Cases} & \uf(4)\\
\sE_6^4  & \so(1,9)+\R   & \spin(7)+\R\\  \hline
  
\sE_6^\C & \sF_4^\C      & \so(8,\C)\\
\sE_6^1  & \sF_4^1       & \so(4,4)\\
\sE_6^2  & \sF_4^1       & \so(3,5)\\
\sE_6^3  & \sF_4^2       & \so(1,7)\\
\sE_6^4  & \begin{Cases}\sF_4^2\\\sF_4\end{Cases} & \so(8)\\ \hline

\sE_7^\C & \sl(8,\C)     & 0\\
\sE_7^1  & \begin{Cases}\sl(8,\R)\\\su(8)\\\su(4,4)\\\sl(4,\HH)\end{Cases} & 0\\
\sE_7^2  & \begin{Cases}\su(2,6)\\\su(4,4)\end{Cases} & \so(2)^3\\
\sE_7^3  & \begin{Cases}\su(2,6)\\\sl(4,\HH)\end{Cases} & \so(4)+\so(4)\\  \hline
&&\\
\end{array}
&\qquad&
\begin{array}{l l l} 
\gf      & \hf         & \str(\gf,\hf) \\\hline

\sE_7^\C & \so(12,\C)+\sl(2,\C) & \sl(2,\C)^3\subseteq\so(12,\C)\\
\sE_7^1  & \begin{Cases}\so(6,6)+\sl(2,\R)\\\so^*(12)+\su(2)\end{Cases} & \sl(2,\R)^3\\
\sE_7^2  & \begin{Cases}\so(12)+\su(2)\\\so(4,8)+\su(2)\\\so^*(12)+\sl(2,\R)\end{Cases} & \su(2)^3\\
\sE_7^3  & \begin{Cases}\so(2,10)+\sl(2,\R)\\\so^*(12)+\su(2)\end{Cases} & \so(6)+\so(2)+\sl(2,\R)\\ \hline
\sE_7^\C  & \sE_6^\C+\C                                                & \so(8,\C)\\
\sE_7^1   & \begin{Cases}\sE_6^1+\R\\\sE_6^2+i\R\end{Cases}            & \so(4,4)\\
\sE_7^2   & \begin{Cases}\sE_6^2+i\R\\\sE_6^3+i\R\end{Cases}           & \so(2,6)+\so(2)\\
\sE_7^3   & \begin{Cases}\sE_6^3+i\R\\\sE_6^4+\R\\\sE_6+i\R\end{Cases} & \so(8)\\

\sE_8^\C  & \so(16,\C)                                                 & 0\\
\sE_8^1   & \begin{Cases}\so(8,8)\\\so(16)\\\so^*(16)\end{Cases}     & 0\\
\sE_8^2   & \begin{Cases}\so(4,12)\\\so^*(16)\end{Cases}               & \so(4)+\so(4)\\\hline

\sE_8^\C  & \sE_7^\C+\sl(2,\C)                                         & \so(8,\C)\\
\sE_8^1   & \begin{Cases}\sE_7^1+\sl(2,\R)\\\sE_7^2+\su(2)\end{Cases}& \so(4,4)\\
\sE_8^2   & \begin{Cases}\sE_7^2+\su(2)\\\sE_7^3+\sl(2,\R)\\\sE_7+\su(2)\end{Cases} & \so(8)\\ \hline

\sF_4^\C  & \sp(3,\C)+\sl(2,\C)                                        & 0\\
\sF_4^1   & \begin{Cases}\sp(3,\R)+\sl(2,\R)\\\sp(3)+\su(2)\\\sp(1,2)+\su(2)\end{Cases}& 0\\
\sF_4^2   & \sp(1,2)+\su(2)                                          & \so(4)+\so(3)\\   \hline

\sF_4^\C  & \so(9,\C)                                                  & \spin(7,\C) \\
\sF_4^1   & \so(4,5)                                                   &  \spin(3,4) \\
\sF_4^2   & \begin{Cases}\so(9)\\\so(1,8)\end{Cases}                 &\spin(7)\\ \hline

\sG_2^\C  & \sl(2,\C)+\sl(2,\C)                                        & 0\\
\sG_2^1   & \begin{Cases}\sl(2,\R)+\sl(2,\R)\\\su(2)+\su(2)\end{Cases} & 0\\\hline
\end{array}\end{array}}
$$
\centerline{\Tabelle{lcaph_except_symm}}

\subsection{Two general cases of symmetric Lie algebras}

$$\tiny{
\begin{array}{l l l l}
\gf      & \hf         & \str(\gf,\hf) &\\ \hline
& & & \gf_\C \ \text{complex simple}, \\
\gf_\C& \gf_\R & \cf_\R &  \gf_\R \subset\gf_\C \ \text{real form}, \\
& & & \cf_\R \subset \gf_\R \ \text{fund.  Cartan}. \\ \hline
\gf&\gf&\gf& \gf \ \text{simple} \\ \hline
\end{array}}$$

\subsection{The non-symmetric absolutely spherical cases}

 $$\tiny{
 \begin{array}{l l l l}
\gf          & \hf          & \str(\gf,\hf)\\ \hline
\sl(2n+1,\C) & \sp(n,\C)+\ff & \ff & \ff \subset \C \\
\sl(2n+1,\R) & \sp(n,\R)+\ff & \ff & \ff \subset \R \\
\su(n,n+1)   & \sp(n,\R)+i\ff & i\ff & \ff \subset \R \\ 
\su(2p,2q+1) & \sp(p,q)+i\ff & \sp(q-p)+i\ff & \ff \subset \R \\ \hline

\sl(n,\C)    &\sl(p,\C)+\sl(q,\C) +\ff  & \sf[\sl(q-p,\C)+\C^p]+\ff & p<q, \ff\subset \C, \dim_\R \ff\leq 1\\
\sl(n,\R)    &\sl(p,\R)+\sl(q,\R)   & \sf[\sl(q-p,\R)+\R^p] & p<q\\
\su(p,q)     &\su(p_1,q_1)+\su(p_2,q_2)& \begin{Cases}\sf[\su(p_2-q_1,q_2-p_1)+(i\R)^{p_1+q_1}],~ p_2 \geq q_1\\\sf[\sf[\uf(q_1-p_2)+\uf(q_2-p_1)]+(i\R)^{p_1+p_2}],~ p_2\leq q_1 \end{Cases}& p_1+q_1< p_2+q_2 \\
\sl(n,\HH)   & \sl(p,\HH)+\sl(q,\HH)& \sf[\sl(q-p,\HH)+\gl(1,\HH)^p]\\  \hline
    
\so(2n+1,\C) & \sl(n,\C)+\C                 & 0\\
\so(n,n+1)   & \sl(n,\R)+\R & 0 \\
\so(2p,2q+1) & \su(p,q)+i\R & \uf(q-p) \\
 
\so(2n,\C) & \sl(n,\C)+ \ff   & \sl(2,\C)^{\frac {n-1}2}+\ff & \ff\subset\C, \dim_\R \ff \leq 1, n \ \hbox{odd}  \\
\so(n,n) & \sl(n,\R) & \sl(2,\R)^{\frac{n-1}2} &  n \ \hbox{odd}  \\
\so(2p,2q)   & \su(p,q)     & \su(q-p)+\sl(2,\R)^p&  p+q \text{ odd}\\
\so^*(2n)    & \su(p,q)                 & \sl(1,\HH)^{\frac{n-1}2} & n=p+q \ \hbox{odd}  \\ 
\hline
\sp(n+1,\C)  & \sp(n,\C)+\C & \sp(n-1,\C)+\C\\
\sp(n+1,\C) & \sp(n,\C) + \sp(1) & \sp(n-1,\C)+\sp(1)\\
\sp(n+1,\C) & \sp(n,\C) + \sp(1,\R) & \sp(n-1,\C) +\sp(1,\R)\\
\sp(n+1,\R)  & \sp(n,\R)+\ff & \sp(n-1,\R)+\ff&
\ff\in \{\R,i\R\}\\
\sp(p,q)     & \begin{Cases}\sp(p-1,q)+i\R\\\sp(p,q-1)+i\R\end{Cases} & \sp(p-1,q-1) + i \R \\
\hline
 \end{array}}$$
\centerline{\Tabelle{lcaph_Kraemer_class}}


$$\tiny{
\begin{array}{l l l l}
\gf         & \hf        & \str(\gf,\hf) \\ \hline
\so(7,\C)   & \sG_2^\C   & \sl(3,\C)\\
\so(3,4)    & \sG_2^1    & \sl(3,\R)\\
\hline
\so(8,\C)   & \sG_2^\C   & \sl(2,\C)\\
\so(4,4)    & \sG_2^1    & \sl(2,\R)\\
\so(3,5)    & \sG_2^1    & \sl(2,\R)\\
\so(1,7)    & \sG_2      & \su(3)\\
\hline
\so(9,\C)   & \spin(7,\C)     & \sl(3,\C)\\
\so(4,5)    & \spin(3,4)      & \sl(3,\R)\\
\so(1,8)    & \spin(7)        & \sG_2\\ \hline
\so(10)     & \spin(7,\C)+\C  & \sl(2,\C)\\
\so(5,5)    & \spin(3,4)+\R   & \sl(2,\R)\\
\so(4,6)    & \spin(3,4)+i\R  & \sl(2,\R)\\
\so(2,8)    & \spin(7)+i\R    & \su(3)\\
\so(1,9)    & \spin(7)+\R     & \sG_2\\
\so^*(10)   & \begin{Cases}\spin(1,6)+i\R\\\spin(2,5)+i\R\end{Cases} & \sl(1,\HH)+i\R\\  \hline
\sG_2^\C    & \sl(3,\C)       & \sl(2,\C)\\
\sG_2^1     & \begin{Cases}\sl(3,\R)\\\su(1,2)\end{Cases} & \sl(2,\R)\\  \hline
\sE_6^\C   & \so(10,\C) +\ff      & \sl(4,\C)+\ff  & \ff \subset \C, \dim_\R \ff \leq 1\\
\sE_6^1     & \so(5,5)        & \sl(4,\R)\\
\sE_6^2     & \begin{Cases}\so(4,6) \\\so^*(10)\end{Cases} & \su(2,2)\\
\sE_6^3     & \begin{Cases}\so(10)\\\so(2,8) \\\so^*(10)\end{Cases} & \su(4)\\
\sE_6^4     & \so(1,9)& \spin(7)\\
\hline 
 \end{array}}$$
\centerline{\Tabelle{lcaph_Kraemer_sporadic}}

\subsection{Non-absolutely spherical simple pairs $(\gf, \hf)$}

The table below treats the cases classified in  Part I.

$$\tiny{
 \begin{array}{l l l l}
\gf          & \hf                                              & \str(\gf,\hf)               & \\ \hline

\sl(n,\Hb)   &\sl(n-1,\Hb)+\ff,~\ff\subseteq\C                  & \sl(n-2,\Hb)+\ff          & n\geq 3\\
\sl(n,\Hb)   & \sl(n,\C)                                        & \gl(1,\Hb)^{[{n\over 2}]} & n\ \mathrm{odd}\\
\su(p,q)     & \su(p_1,q_1)+\su(p_2,q_2)                        & \begin{Cases}\sf[\su(p_2-q_1,q_2-p_1)+(i\R)^{p_1+q_1}],~ p_2 \geq q_1 \\\sf[\sf[\uf(q_1-p_2)+\uf(q_2-p_1)]+(i\R)^{p_1+p_2}],~p_2\leq q_1\end{Cases}           & (p_1,q_1)\neq (q_2,p_2) \\
\su(1,n)     & \su(1,n-2q) +\sp(q)+ \ff,~ \ff \subset \uf(1)   & \su(n-2q) +\sp(q-1) +i\R  +\ff & 1\leq q\leq\frac{n}{2} 
\\ \hline
   
\sp(p,q)     & \su(p,q)                                         & \sf [\uf(q-p) + i \R ^q]  & \\
\sp(p,q)     & \begin{Cases}\sp(p-1,q)\\ \sp(p, q-1)\end{Cases} & \sp(p-1,q-1)              & \\ 
\so(2p,2q)   & \su(p,q)                                         & \su(q-p) + \sl(2,\R)^p    & p < q\\
\so(2p+1, 2q) & \su(p,q)                                         & \su(q-p)                  & p\neq q-1,q \\\hline 

\so(3,n)     & \so(3,n-8) + \spin(7)                            & \so(n-8) + \so(3),~\so(3)\subset\spin(7) & n\geq 8 \\
\so(2,n)     & \so(2,n-8) + \spin(7)                            & \so(n-8) + \su(3)  & n\geq 8 \\
\so(2,n)     & \so(2,n-7) + \sG_2                               & \so(n-7) + \su(2)           & n\geq 7\\
\so(1,n)     & \so(1,n-8) + \spin(7)                            & \so(n-8) + \sG_2           & n\geq 8 \\
\so(1,n)     & \so(1,n-7) + \sG_2                               & \so(n-7) + \su(3)      & n\geq 7 \\
\so(1,n)     & \so(1,n-16) + \spin(9)                           & \so(n-16) + \spin(7)           & n\geq 16 \\
\so(1,n)     & \so(1,n-2r) + \su(r) + \ff,~\ff \subset\uf(1)    & \so(n-2r)  + \su(r-1)+\ff          & 2\leq r\leq\frac{n}{2} \\  
\so(1,n)     & \so(1,n-4r) + \sp(r) + \ff,~\ff \subset\sp(1)    & \so(n-4r) + \sp(r-1) +\ff              & 2\leq r\leq\frac{n}{4} \\ 

\so(3,6)     & \so(2)+ \sG_2^1                                  & 0                         & \\
\so(4,7)     & \so (3) + \spin(3,4)                             & 0                         & \\
\so^*(2n)    & \so^*(2n-2)                                      & \so^*(2n-4)               & n\geq 5 \\
\so^*(10)    & \begin{Cases}\spin(1,6) \\ \spin(2,5)\end{Cases} & \sl(1,\Hb)                & \\ \hline
\sE_6^4     & \sl(3,\Hb) + \ff , ~\ff\subset \uf(1)& \so(5) +\ff & \\
\sE_7^2      & \begin{Cases}\sE_6^3\\ \sE_6^2 \end{Cases}       & \so(2,6)                  & \\
\sF_4^2      & \sp(1,2) + \ff,~\ff \subset \uf (1)              & \so(4)+\ff                & \\
\hline
 \end{array}}$$
\centerline{\Tabelle{lcaph_KKPS_class}}

\bigskip 
{The following two tables related to rank one 
groups are used in Thm.~\ref{thm:semisimplespherical}. }

$$\tiny{
 \begin{array}{l l l l l
}
\gf          & \hf                                              & \ef               &\ef_0 &\\ \hline
\su(1,n)     & \su(1,p) +\su(n-p)  &\su(n-p)  & {\su(n-p-1)}  & 1 \leq p <n\\  
\su(1,n)     & \su(1,n-2p) +\sp(p)  & \sp(p) & {\sp(p-1)}  & 1\leq p<\frac{n}{2} \\ \hline
\so(1,n)      & \so(1,p)    +\so(n-p)  &  \so(n-p)  & \so(n-p-1) & 2\leq p\leq n -3  \\
\so(1,n)     & \so(1,n-8) + \spin(7)                            & \spin(7)          & \sG_2 & n\geq 10\\
\so(1,n)     & \so(1,n-7) + \sG_2                               &\sG_2 & \su(3)   & n\geq 9    \\
\so(1,n)     & \so(1,n-16) + \spin(9)                          &\spin(9)  & \spin(7) & n\geq 18 \\
\so(1,n)     & \so(1,n-2r) + \su(r)   & \su(r) & \su(r-1) & n-2r, r\geq 2  \\
\so(1,n)     & \so(1,n-4r) + \sp(r)     &\sp(r) & \sp(r-1) & n-4r, r\geq2  \\
\so(1,n)     & \so(1,n-4r) + \sp(r) +\sp(1)    &\sp(r) & \sp(r-1)+\sp(1) & n-4r, r\geq2  \\

\hline
 \end{array}}$$
\centerline{\Tabelle{e-table}}

The restriction on the indices in Table \ref{e-table} is such that $\hf\neq \gf$ is semi-simple and non-compact.

$$\tiny{
 \begin{array}{l l l l l}
\gf          & \hf                                              & {\mathfrak i}              &{\mathfrak i}_0 &\\ \hline
\su(1,n)     & \sf[\uf(1,p) +\uf(n-p)]  &\uf(n-p)  & \uf(n-p-1) & 1 \leq p <n\\  
\su(1,n)     & \su(1,n-2p) +\sp(p) +\uf(1)  & \sp(p){+\uf(1)}  & \sp(p-1) +\uf(1)  & 1\leq p<\frac{n}{2} \\ \hline
\so(1,n)     & \so(1,n-2) + \so(2)   & \so(2)  & 0 &  \\
\so(1,n)     & \so(1,n-2r) + \uf(r)   & \uf(r) & \uf(r-1) & n-2r, r\geq 2  \\
\so(1,n)     & \so(1,n-4r) + \sp(r)  +\uf(1)  &\sp(r){+\uf(1)}  & \sp(r-1) +\uf(1)& n-4r, r\geq2  \\

\hline
 \end{array}}$$
\centerline{\Tabelle{i-table}}

\subsection{Strongly spherical pairs}

We conclude with a classification of the  strictly indecomposable
strongly spherical pairs $(\gf,\hf)$,
with $\gf$ semi-simple and non-compact. This 
classification
is easily extracted from our tables, in view of
Lemma \ref{lemma stand2}.  In the following table we 
exclude symmetric pairs, as they
were previously tabled in \cite{KM},  and cases with $\hf$ compact.

$$\!\!\!\!\!\!\!{\small \begin{array}{llll}
\gf&\hf\\
\hline
\su(p,q)&\su(p-1,q)&&p,q\ge 1;\, p-1,p\neq q\\
\su(n,1)&\su(n-q,1){+}\su(q)& & n \ge 3; 1\le q\le n\\
\su(n,1)&\su(n-2q,1){+}\sp(q)+\ff&\ff\subseteq\uf(1)&1\le q\le\frac n2\\
\sl(n,\Hb)&\sl(n-1,\Hb)+\R+\ff&\ff\subseteq\uf(1)&n\ge3   \\
\hline
\sp(p,q)&\sp(p-1,q)+\ff&\ff\subseteq\uf(1)& p,q\ge1\\
\hline
 \so(2n,2)&\su(n,1)&&n\ge 2\\
 \so^*(2n+2)&\so^*(2n)&&n\ge 3\\
\so(n,1)&\so(n-2q,1)+\su(q)+\ff&\ff\subseteq\uf(1)&2\le q\le\frac n2\\
\so(n,1)&\so(n-4q,1)+\sp(q)+\ff&\ff\subseteq\sp(1)&2\le q\le\frac n4\\
\so(n,1)&\so(n-16,1)+\spin(9)&&n\ge16\\
\so(n,1)&\so(n-7,1)+\sG_2&&n\ge7\\
\so(n,1)&\so(n-8, 1)+\spin(7)&&n\ge8\\
\hline
\end{array}}$$
\centerline{\Tabelle{strsph}}

\section{The geometry of restricted root spaces in symmetric spaces}\label{AppB}

Throughout this appendix we let $\gf$ be a semi-simple  real Lie algebra and $\hf\subset \gf$
a symmetric subalgebra. Let $\sigma: \gf \to \gf$ be the involution with fixed point algebra $\hf$
and $\theta$ a Cartan involution of $\gf$ which commutes with $\sigma$.  
We denote by $\gf=\kf +\sf$ the decomposition into $\theta$-eigenspaces where 
$\kf\subset \gf$ corresponds to the $+1$-eigenspace and constitutes a maximal compact subalgebra
of $\gf$.  Notice that $\sf=\kf^\perp$ where the orthogonal complement is taken with respect to 
the Cartan-Killing form.  Likewise we record the decomposition into $\sigma$-eigenspaces 
$\gf = \hf + \hf^\perp$. 
Let now $\af_0 \subset \kf^\perp \cap\hf^\perp$ be a maximal abelian subspace which we inflate 
to a maximal abelian subspace $\af\subset \kf^\perp$.  Let $\Sigma=\Sigma(\gf, \af)\subset \af^*$ 
be the corresponding restricted root system and recall a result of Rossmann 
 \cite{Rossmann} which asserts that 
$\Sigma_0:= \Sigma|_{\af_0}\bs \{0\}$ is a root system (as $\Sigma$,  possibly reduced). 
We let $\Sigma_0^+\subset \Sigma_0$ be a  set of positive roots which we inflate to 
a  set $\Sigma^+$  of positive roots for $\Sigma$.   Set $\mf:=\zf_\kf(\af)$,
 the centralizer of $\af$ in $\kf$, and let $\nf\subset \gf$ 
be the   sum of the root spaces for the roots of $\Sigma^+$.  Then 
$\pf:=\mf + \af +\nf$ defines a minimal parabolic subalgebra of $\gf$ with 
$\gf = \hf +\pf$ and thus exhibiting  $(\gf,\hf)$ as a spherical pair.
For any $\alpha\in \Sigma_0$ we denote by $\gf^\alpha$ the corresponding root space and define 
a unipotent subalgebra $\uf:=\bigoplus_{\alpha \in \Sigma_0^+} \gf^\alpha$.  Then with 
$\lf:= \zf_\gf(\af_0)$ we obtain via $ \qf:= \lf  \ltimes \uf$
a parabolic subalgebra of $\gf$ which contains $\pf$. Moreover 
$\lf=\qf\cap\theta(\qf)=\qf\cap \sigma(\qf)$ is $\sigma$-stable 
with $\lf_{\rm n} \subset \lf \cap \hf$.  This shows that $\qf$ is the unique parabolic subalgebra
containing $\pf$, which is adapted to $(\gf,\hf)$.  
Furthermore $\af_0\simeq \af/ \af\cap \hf=\af_Z$.

\par Consider the involution $\tau:=\sigma\circ \theta$ and note that 
$\af_0$ is $\tau$-fixed. Hence every root space $\gf^\alpha$ is $\tau$-stable and every root space
$\gf^\alpha$ decomposes into $\gf^{\alpha, +}\oplus \gf^{\alpha, -}$ according to the eigenspaces of $\tau$.  
Moreover $\gf^+:=\gf^\tau$ is $\sigma$-stable and $\sigma$ coincides with $\theta$ on $\gf^+$; in other words: 
$(\gf^+, \gf^+\cap \hf)$ is a Riemannian subsymmetric 
pair of $(\gf, \hf)$. Notice that $\lf^+:=\lf\cap\gf^+
= \af_0 +\mf^+$  with $\mf^+$ the centralizer of $\af_0$ in $\kf\cap\hf$. 

\par Let $G=\operatorname{Int}(\gf)$ be the adjoint group of 
$\gf$,  and let $M^+\subset L\subset G$ denote the
connected subgroups corresponding to the subalgebras 
$\mf^+\subset\lf\subset\gf$. Then
each root space $\gf^\alpha$, $\alpha\in \Sigma_0=\Sigma(\gf, \af_0)$,  is  naturally a module for $L$, 
and also for the extension $L_\tau:=L \rtimes\{ \1,\tau\}$,
and the subspaces $\gf^{\alpha,\pm}$ are $\tau$- 
and $M^+$-invariant.

This appendix is 
concerned with the basic $L$-geometry of the 
root spaces $\gf^\alpha$, summarized in the next proposition:

\begin{prop} \label{L-geometry} Let $(\gf, \hf)$ be a symmetric pair and $\af_0\subset \hf^\perp\cap\kf^\perp$ 
be a maximal abelian subspace.  Then with notation as above the following 
holds for every root space $\gf^\alpha$ for $\alpha\in \Sigma_0$:  

\begin{enumerate} 
\item\label{eins} $\gf^\alpha$ decomposes into finitely many $L$-orbits. 
In particular, there exists an open $L$-orbit, and hence $\gf^\alpha_\C$ is a prehomogeneous vector space. 
\item \label{eineinhalb} If $\dim \gf^{\alpha, \epsilon}>1$ then
$M^+$ acts transitively on the unit sphere of $\gf^{\alpha, \epsilon}$ ($\epsilon\in\{\pm\}$).
\item \label{zwei} $\gf^\alpha$ is irreducible as a real representation of $L_\tau$.

\end{enumerate}
\end{prop}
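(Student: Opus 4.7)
The plan is to derive all three assertions by reducing to Kostant's classical transitivity theorem for Riemannian symmetric pairs, applied to suitable auxiliary Riemannian subpairs of $(\gf,\hf)$.

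First I would record the relevant decompositions. Since $\sigma$ and $\theta$ both act as $-1$ on $\af_0 \subset \hf^\perp \cap \kf^\perp$, one has $\af_0 \subset \gf^+$, every $\gf^\alpha$ is $\tau$-stable, and a direct check on the four combinations of $\sigma$- and $\theta$-eigenvalues gives $\gf^+ = (\hf \cap \kf) \oplus (\hf^\perp \cap \kf^\perp)$ and $\gf^- = (\hf \cap \kf^\perp) \oplus (\hf^\perp \cap \kf)$. Because $\sigma = \theta$ on $\gf^+$, the pair $(\gf^+, \gf^+ \cap \hf) = (\gf^+, \kf \cap \hf)$ is Riemannian symmetric with maximal compact subalgebra $\kf \cap \hf$. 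The subspace $\af_0$ is maximal abelian in $\hf^\perp \cap \kf^\perp$, which is exactly the orthogonal complement of the maximal compact subalgebra inside $\gf^+$, so $\af_0$ serves as the Iwasawa subalgebra of this Riemannian pair, its centralizer in $\kf \cap \hf$ is precisely $\mf^+$, and its restricted root spaces are exactly the subspaces $\gf^{\alpha,+}$.

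For the $+$ half of (\ref{eineinhalb}), Kostant's theorem [\cite{Kostant}, Thm.~2.1.7] applied to $(\gf^+, \kf \cap \hf)$ yields at once that $M^+$ acts transitively on the unit sphere of each $\gf^{\alpha,+}$ of dimension at least two. For the $-$ half I would pass to the $c$-dual real form $\tilde\gf = \gf^+ \oplus i\gf^- \subset \gf_\C$, which is a real semisimple Lie subalgebra admitting an involution $\tilde\sigma$ whose fixed-point algebra $(\hf \cap \kf) + i(\hf \cap \kf^\perp)$ is a compact form of $\hf$ and coincides with the maximal compact subalgebra of $\tilde\gf$. After extending $\af_0$ to a maximal abelian subspace $\tilde\af$ inside the non-compact part of $\tilde\gf$ and verifying that the centralizer of $\af_0$ in the maximal compact of $\tilde\gf$ still contains $\mf^+$ and acts on the subspace $i\gf^{\alpha,-}$ by the same representation as on $\gf^{\alpha,-}$, Kostant's theorem applied to the Riemannian pair $(\tilde\gf, (\hf \cap \kf) + i(\hf \cap \kf^\perp))$ then yields transitivity of $M^+$ on the unit sphere of $\gf^{\alpha,-}$.

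For (\ref{eins}) I would combine (\ref{eineinhalb}) with the scaling action of $A_0 = \exp(\af_0)$, which multiplies all of $\gf^\alpha$ by $e^{\alpha(H)}$, and with the action of $A \cap H = \exp(\af \cap \hf) \subset L$, which preserves each $\gf^\alpha$ and splits it further into the finer $\af$-root spaces $\gf^\beta$ with $\beta|_{\af_0} = \alpha$, acting on each by a generally distinct character. Together with $M^+$-transitivity on the $\tau$-eigenspheres, these actions confine the orbit representatives to a finite list, so $L$ has only finitely many orbits on $\gf^\alpha$, whence the existence of an open real orbit and hence prehomogeneity of $\gf^\alpha_\C$ under $L_\C$. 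For (\ref{zwei}), any non-zero $L_\tau$-invariant subspace $V$ of $\gf^\alpha$ splits under $\tau$ as $V = V^+ \oplus V^-$ with $V^\pm \subset \gf^{\alpha,\pm}$, and since $M^+ \subset L_\tau$ acts transitively on each unit sphere each $V^\pm$ is either zero or the full $\gf^{\alpha,\pm}$. The case that only one of $V^\pm$ is full is excluded by producing an element of $A \cap H$ whose action on $\gf^\alpha$ does not commute with $\tau$; such an element exists whenever both $\gf^{\alpha,+}$ and $\gf^{\alpha,-}$ are non-zero, because then some root $\beta \in \Sigma$ with $\beta|_{\af_0} = \alpha$ must satisfy $\beta|_{\af \cap \hf} \neq 0$, so that $\tau\beta \neq \beta$ and $A \cap H$ rotates $\gf^\beta \oplus \gf^{\tau\beta}$ away from the $\tau$-eigenspace decomposition. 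The main obstacle is the $-$ half of (\ref{eineinhalb}): the $c$-dual construction must be set up carefully enough that $\af_0$ remains maximal split in the appropriate subspace of $\tilde\gf$ (after possible enlargement within $i \gf^-$), and that the $M^+$-module structure on $i\gf^{\alpha,-}$ in the auxiliary pair faithfully reproduces the one on $\gf^{\alpha,-}$ in the original pair.
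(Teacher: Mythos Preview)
Your overall strategy—prove (\ref{eineinhalb}) first via Kostant, then deduce (\ref{eins}) and (\ref{zwei})—is different from the paper's, which proves (\ref{eins}) first via Vinberg's theorem on $\Z$-graded Lie algebras and then deduces (\ref{eineinhalb}) by applying (\ref{eins}) to the auxiliary subalgebra
\[
\gf^\epsilon := \gf^{-2\alpha,+} + \gf^{-\alpha,\epsilon} + \lf^+ + \gf^{\alpha,\epsilon} + \gf^{2\alpha,+}.
\]
Your route has two genuine gaps.

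\textbf{The $c$-dual argument for (\ref{eineinhalb}) on $\gf^{\alpha,-}$ is incomplete.} You correctly note that $\af_0$ need not be maximal abelian in the non-compact part $\tilde\sf = (\hf^\perp\cap\kf^\perp)+i(\hf^\perp\cap\kf)$ of $\tilde\gf$. Enlarging to a maximal $\tilde\af$ changes both the relevant centralizer $\tilde M$ and the root spaces: Kostant then yields transitivity of $\tilde M$ on spheres in the \emph{finer} $\tilde\af$-root spaces, not on all of $i\gf^{\alpha,-}$, and there is no reason $M^+$ should contain $\tilde M$ or inherit transitivity on the larger sphere. The paper sidesteps this entirely: the subalgebra $\gf^\epsilon$ lives inside $\gf$ itself, has $\lf^+$ as its degree-zero part for the obvious $\Z$-grading, and Vinberg's finiteness gives an open $L^+=M^+A_0$-orbit on $\gf^{\alpha,\epsilon}$ directly.

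\textbf{The deduction of (\ref{eins}) from (\ref{eineinhalb}) fails.} Your claim that $M^+$-transitivity on the two $\tau$-eigenspheres, together with the $A_0$- and $(A\cap H)$-actions, confines orbit representatives to a finite list is false in general. Take the group case $(\gf,\hf)=(\hf'\oplus\hf',\diag\hf')$ with $\hf'=\so(1,n)$, $n\ge 3$. Then $\gf^\alpha\cong(\hf')^{\alpha'}\oplus(\hf')^{-\alpha'}\cong\R^{n-1}\oplus\R^{n-1}$, the group $M^+A$ consists of pairs $(ma_1,ma_2)$ with $m\in M'\cong\SO(n-1)$ and $a_i\in A'$, and after independently rescaling each factor to unit length one is left with the diagonal $\SO(n-1)$-action on $S^{n-2}\times S^{n-2}$, whose orbits are parametrized by the continuous invariant $\langle X,\theta'Y\rangle$. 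So $M^+A$ has infinitely many orbits, even though $L=(M'A')\times(M'A')$ has only four. The finiteness in (\ref{eins}) genuinely needs the full Levi $L$, and the paper obtains it from Vinberg's result that the degree-zero group of a $\Z$-graded semisimple Lie algebra acts with finitely many orbits on each graded piece (plus Whitney's theorem to descend to real points).

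Your argument for (\ref{zwei}) is close to the paper's. One small point: your claim that ``some $\beta$ with $\beta|_{\af\cap\hf}\neq 0$ exists whenever both $\gf^{\alpha,\pm}$ are non-zero'' tacitly assumes $\af\cap\hf\neq 0$; the paper handles the case $\af_0=\af$ separately (there $\gf^\alpha$ is already $L$-irreducible by Kostant applied with $\sigma=\theta$) and only afterwards picks a non-zero $Y\in\af\cap\hf$, using the reduction to $\gf=\gf[\alpha]$ to ensure $[Y,\gf^\alpha]\neq 0$.
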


\begin{rmk}\label{Kostant-remark} 
(a) In case $\sigma=\theta$ we have $\gf^+=\gf$, and $M^+$  
is the identity component of the centralizer of $\af$ in $K$.
In this case the transitivity in (\ref{eineinhalb}) is 
a well-known result of Kostant  \cite[Thm.~2.1.7]{Kostant} of which (\ref{eins})
and (\ref{zwei}) are also immediate consequences.
\par (b) In general $\gf^\alpha$ is not irreducible as a real $L$-module. This
is for example easy to see in the case of $(\gf,\hf)=(\hf\oplus\hf,\diag(\hf))$.
\end{rmk}

\begin{proof}  Fix $\alpha\in \Sigma_0^+$ and consider the graded subalgebra
\begin{equation}\label{g(alpha)} 
\gf(\alpha):= 
\gf^{-2\alpha}\oplus\gf^{-\alpha}\oplus\lf\oplus\gf^{\alpha}\oplus\gf^{2\alpha}
\end{equation}
This subalgebra is $\sigma$ and $\theta$-stable and the same holds for the semi-simple
subalgebra $\gf[\alpha]$ generated by $\gf^{\alpha}$ and $\gf^{-\alpha}$.  
With $\lf[\alpha]= \lf\cap \gf[\alpha]$ we then obtain
$$\gf[\alpha]  = 
\gf^{-2\alpha}\oplus\gf^{-\alpha}\oplus\lf[\alpha]\oplus\gf^{\alpha}\oplus\gf^{2\alpha}$$
and $\af_0[\alpha]=\af_0 \cap \lf[\alpha]$ is one-dimensional.
It is clear that it suffices to show
the assertions of the proposition for $\gf[\alpha]$, 
and from now on we assume that $\gf=\gf(\alpha)=\gf[\alpha]$.

To prove (\ref{eins}) we observe that (\ref{g(alpha)}) provides a 
$\Z$-grading of $\gf$.
Then a result of  Vinberg (cf. \cite[Prop. 2]{V}) implies that 
the action $L_\C $ on $(\gf^\alpha)_\C$ 
has only finitely many orbits.  Let $\Oc\subset (\gf^\alpha)_\C$ be one of them and set 
$\Oc_\R:= \Oc\cap \gf^\alpha$.  Then $\Oc_\R$ is a finite union of $L$-orbits by a result of Whitney \cite{Whit} on connected components of real points of smooth complex varieties.

For (\ref{eineinhalb}) we first note that $M^+$ is compact and preserves the
Cartan-Killing form, hence it acts on the unit sphere of $\gf^{\alpha,\epsilon}$
and the orbits are closed. It then suffices to show that there is an open orbit.
Since $A_0$ acts by scalar multiplication,
it is equivalent to show that $L^+:=M^+A_0$ has an open orbit on $\gf^{\alpha,\epsilon}$.
This follows by applying (\ref{eins})  to the $\sigma$-invariant 
Lie subalgebra of $\gf$,
$$ \gf^\epsilon := \gf^{-2\alpha,+} + \gf^{-\alpha, \epsilon} + 
\lf^+ + \gf^{\alpha, \epsilon}+ \gf^{2\alpha,+}\, .$$

We move on to (\ref{zwei}). Let us first observe that if $\af_0=\af$,
that is, if $\af_0$ is maximal abelian in $\sf$, then by applying (\ref{eineinhalb}) 
for the involution $\sigma=\theta$ (as in Remark \ref{Kostant-remark}(a))
it follows that $\gf^\alpha$ is irreducible already as an $L$-module.
We assume from now on then that $\af_0\subsetneq\af$,  
and let $Y\in\af\cap\hf$ be non-zero.
Since $\gf=\gf[\alpha]$ it follows that $[Y,\gf^\alpha]\neq\{0\}$.
Note that $\tau(Y)=-Y$ and hence
$[Y,\gf^{\alpha,\pm}]\subset\gf^{\alpha,\mp}$. Since $\ad(Y)$ acts semisimply,
it follows that $[Y,\gf^{\alpha,\epsilon}]\neq\{0\}$ for each $\epsilon$.

\par Let $U \subset \gf^\alpha$ be a non-zero $L_\tau$-invariant subspace.
Since $U$ is 
$\tau$-invariant then $U=U^+\oplus U^-$ where $U^\pm=U\cap\gf^{\alpha,\pm}$,
and each $U^\pm$ is $M^+$-invariant.
It follows from (\ref{eineinhalb}) that $U^\epsilon$ equals 
$\{0\}$ or $\gf^{\alpha,\epsilon}$ for each $\epsilon$.
It now follows from the $\ad(Y)$-invariance that
$U=\gf^\alpha$, and we are done. 
\end{proof}

\end{document}